\definecolor{darkgreen}{rgb}{0,0.45,0}
\definecolor{myurlcolor}{rgb}{0.6,0,0} \definecolor{mycitecolor}{rgb}{0,0,0.8} \definecolor{myrefcolor}{rgb}{0,0,0.8}
\crefname{equation}{}{}
\crefname{item}{}{}
\tikzset{tick/.style={postaction={decorate,decoration={markings,
mark=at position 0.5 with {\draw[-] (0,.4ex) -- (0,-.4ex);}}}}}
\tikzset{tickx/.style={postaction={decorate,decoration={markings,mark=at position 0.5 with
{\fill circle [radius=.28ex];}}}}}
\newtheorem*{thm*}{Theorem}
\theoremstyle{remark}
\newtheorem*{rmk*}{Remark}
\newtheorem*{lem*}{Lemma}
\theoremstyle{definition}
\newtheorem*{defi*}{Definition}
\newtheorem*{cor*}{Corollary}
\theoremstyle{definition}
\newtheorem*{examples*}{Examples}
\newtheorem{prop*}{Proposition}
\theoremstyle{plain}
\newtheorem{thm}{Theorem}[section]
\theoremstyle{plain}
\newtheorem{prop}[thm]{Proposition}
\theoremstyle{remark}
\newtheorem{rmk}[thm]{Remark}
\theoremstyle{plain}
\newtheorem{lem}[thm]{Lemma}
\theoremstyle{plain}
\newtheorem{cor}[thm]{Corollary}
\theoremstyle{definition}
\newtheorem{defi}[thm]{Definition}
\theoremstyle{definition}
\newtheorem{examples}[thm]{Example}
\DeclareFontFamily{U}{mathx}{\hyphenchar\font45}
\DeclareFontShape{U}{mathx}{m}{n}{
      <5> <6> <7> <8> <9> <10>
      <10.95> <12> <14.4> <17.28> <20.74> <24.88>
      mathx10
      }{}
\DeclareSymbolFont{mathx}{U}{mathx}{m}{n}
\DeclareMathAccent{\widecheck}{0}{mathx}{"71}
\newcommand{\catname}[1]{\mathcal{#1}}
\newcommand{\namedcat}[1]{\mathsf{#1}}
\newcommand{\ps}[1]{\mathscr{#1}}
\newcommand{\TWO}{\adjustbox{scale=.5}{\begin{tikzcd}[baseline=-0.5ex,cramped,sep=.2in,ampersand replacement=\&]\bullet\ar[r,shift left]\ar[r,shift right] \& \bullet
\end{tikzcd}}}
\newcommand{\Comod}{\namedcat{Comod}}
\newcommand{\Mod}{\namedcat{Mod}}
\newcommand{\Cat}{\namedcat{Cat}}
\newcommand{\Mon}{\namedcat{Mon}}
\newcommand{\Comon}{\namedcat{Comon}}
\newcommand{\ob}{\mathrm{ob}}
\newcommand{\cod}{\mathrm{cod}}
\newcommand{\opl}{\mathrm{op\ell}}
\newcommand{\lax}{\ell}
\newcommand{\pse}{\mathrm{ps}}
\newcommand{\Cart}{\ensuremath{\mathrm{Cart}}}
\newcommand{\Cocart}{\ensuremath{\mathrm{Cocart}}}
\newcommand{\op}{\mathrm{op}}
\newcommand{\sbul}{\scriptstyle\bullet}
\newcommand{\tick}{\object@{|}}
\newcommand{\Set}{\namedcat{Set}}
\newcommand{\matr}[3]{\SelectTips{eu}{10}\xymatrix@C=.2in{#1\colon #2\ar[r]|-{\object@{|}} & #3}}
\newcommand{\proar}[3]{\SelectTips{eu}{10}\xymatrix@C=.2in{#1\colon #2\ar[r]|-{\sbul} & #3}}
\newcommand{\simrightarrow}{\xrightarrow{\raisebox{-4pt}[0pt][0pt]{\ensuremath{\sim}}}}
\newcommand{\Fib}{\namedcat{Fib}}
\newcommand{\OpFib}{\namedcat{OpFib}}
\newcommand{\MonFib}{\namedcat{MonFib}}
\newcommand{\MonOpFib}{\namedcat{MonOpFib}}
\newcommand{\BrMonFib}{\namedcat{BrMonFib}}
\newcommand{\SymMonFib}{\namedcat{SymMonFib}}
\newcommand{\SymMonCat}{\namedcat{SymMonCat}}
\newcommand{\BrMonOpFib}{\namedcat{BrMonOpFib}}
\newcommand{\SymMonOpFib}{\namedcat{SymMonOpFib}}
\newcommand{\MonCat}{\namedcat{MonCat}}
\newcommand{\cMonCat}{\namedcat{cMonCat}}
\newcommand{\cocMonCat}{\namedcat{cocMonCat}}
\newcommand{\cMonFib}{\namedcat{cMonFib}}
\newcommand{\cocMonOpFib}{\namedcat{cocMonOpFib}}
\newcommand{\MonICat}{\namedcat{MonICat}}
\newcommand{\OpICat}{\namedcat{OpICat}}
\newcommand{\MonOpICat}{\namedcat{MonOpICat}}
\newcommand{\TCat}{\namedcat{2}\namedcat{Cat}} 
\newcommand{\MonTCat}{\Mon\TCat}
\newcommand{\BrMonTCat}{\Br\MonTCat}
\newcommand{\SymMonTCat}{\Sym\MonTCat}
\newcommand{\SSpan}{\mathbb{S}\namedcat{pan}}
\newcommand{\NetMod}{\namedcat{NetMod}}
\newcommand{\maps}{\colon}
\newcommand{\ICat}{\namedcat{ICat}}
\newcommand{\spl}{\mathrm{s}}
\newcommand{\Fam}{\namedcat{Fam}}
\newcommand{\Maf}{\namedcat{Maf}}
\newcommand\braid{\beta}
\newcommand{\mlt}{m} % V-category composition/multiplication
\newcommand{\uni}{j} % V-category identities/unit
\def\ot{\otimes}
\newcommand{\PsMon}{{\sf{PsMon}}}
\newcommand{\BrPsMon}{{\sf{BrPsMon}}}
\newcommand{\SymPsMon}{{\sf{SymPsMon}}}
\definecolor{purple(x11)}{rgb}{0.5, 0.0, 0.5}
\newcommand{\inta}{\textstyle\int\hspace{-.07in}}
\newcommand{\Br}{\namedcat{Br}}
\newcommand{\Sym}{\namedcat{Sym}}
\definecolor{joecolor(x11)}{rgb}{0.0, 0.5, 0.5}
\newcommand{\define}[1]{{\bf \boldmath{#1}}}
\newcommand{\A}{\catname A}
\newcommand{\B}{\catname B}
\newcommand{\C}{\catname C}
\newcommand{\U}{\catname A}
\newcommand{\V}{\catname X} 
\newcommand{\W}{\catname B}
\newcommand{\Z}{\catname Y} 
\newcommand{\T}{P} 
\newcommand{\ES}{Q} 
\newcommand{\J}{\catname J}
\newcommand{\K}{\catname K}
\renewcommand{\L}{\catname L}
\newcommand{\M}{\ps M}
\newcommand{\N}{\ps N}
\newcommand{\F}{\ps F}
\newcommand{\G}{\ps G}
\newcommand{\X}{\catname X}
\newcommand{\Y}{\catname Y}
\newcommand{\FinSet}{\namedcat{FinSet}}
\renewcommand{\1}{\mathbf{1}}
\definecolor {processblue}{cmyk}{0.9,0.5,0,0}
\tikzstyle{simple}=[-,line width=2.000]
\tikzstyle{arrow}=[-,postaction={decorate},decoration={markings,mark=at position .5 with {\arrow{>}}},line width=1.100]
\tikzstyle{none}=[inner sep=-1pt]
\tikzstyle{species}=[circle,fill=none,draw=black,scale=1.0]
\tikzstyle{transition}=[rectangle,fill=none,draw=black,scale=1.15]
\tikzstyle{empty}=[circle,fill=none, draw=none]
\tikzstyle{inputdot}=[circle,fill=black,draw=black, scale=.5]
\tikzstyle{dot}=[circle,fill=black,draw=black]
\tikzstyle{bounding}=[circle,dashed, fill=none,draw=black, scale=9.00]
\tikzstyle{simple}=[-,draw=black,line width=1.000]
\tikzstyle{inarrow}=[-,draw=black,postaction={decorate},decoration={markings,mark=at position .5 with {\arrow{>}}},line width=1.000]
\tikzstyle{tick}=[-,draw=black,postaction={decorate},decoration={markings,mark=at position .5 with {\draw (0,-0.1) -- (0,0.1);}},line width=1.000]
\tikzstyle{inputarrow}=[->,draw=black, shorten >=.05cm]
\tikzset{main node/.style={circle,fill=blue!20,draw,minimum size=1cm,inner sep=0pt},}
\newcommand{\Grph}{\namedcat{Grph}}
\newcommand{\Cospan}{\mathsf{Cospan}}
\renewcommand{\2}{\mathbf{2}}
\tikzset{
   oriented WD/.style={%everything after equals replaces "oriented WD" in key.
      every to/.style={out=0,in=180,draw},
      label/.style={
         font=\everymath\expandafter{\the\everymath\scriptstyle},
         inner sep=0pt,
         node distance=2pt and -2pt},
      semithick,
      node distance=1 and 1,
      decoration={markings, mark=at position .5 with {\arrow{stealth};}},
      ar/.style={postaction={decorate}},
      execute at begin picture={\tikzset{
         x=\bbx, y=\bby,
         every fit/.style={inner xsep=\bbx, inner ysep=\bby}}}
      },
   bbx/.store in=\bbx,
   bbx = 1.5cm,
   bby/.store in=\bby,
   bby = 1.75ex,
   bb port sep/.store in=\bbportsep,
   bb port sep=2,
   bb port length/.store in=\bbportlen,
   bb port length=4pt,
   bb min width/.store in=\bbminwidth,
   bb min width=1cm,
   bb rounded corners/.store in=\bbcorners,
   bb rounded corners=2pt,
   bb small/.style={bb port sep=1, bb port length=2.5pt, bbx=.4cm, bb min width=.4cm, bby=.7ex},
   bb Small/.style={bb port sep=1, bb port length=2.5pt, bbx=.5cm, bb min width=.5cm, bby=1ex},
   bb/.code 2 args={%When you see this key, run the code below:
      \pgfmathsetlengthmacro{\bbheight}{\bbportsep * (max(#1,#2)+1) * \bby}
      \pgfkeysalso{draw,minimum height=\bbheight,minimum width=\bbminwidth,outer sep=0pt,
         rounded corners=\bbcorners,thick,
         prefix after command={\pgfextra{\let\fixname\tikzlastnode}},
         append after command={\pgfextra{\draw
            \ifnum #1=0{} \else foreach \i in {1,...,#1} {
               ($(\fixname.north west)!{\i/(#1+1)}!(\fixname.south west)$) +(-\bbportlen,0) coordinate
               (\fixname_in\i) -- +(\bbportlen,0) coordinate (\fixname_in\i')}\fi %Define the endpoints of tickmarks
            \ifnum #2=0{} \else foreach \i in {1,...,#2} {
               ($(\fixname.north east)!{\i/(#2+1)}!(\fixname.south east)$) +(-\bbportlen,0) coordinate
               (\fixname_out\i') -- +(\bbportlen,0) coordinate (\fixname_out\i)}\fi;
            }}
        }
    },
    bb name/.style={append after command={\pgfextra{\node[anchor=north] at (\fixname.north) {#1};}}}
}
\newcommand{\inp}[1]{{#1}^\mathrm{in}}
\newcommand{\out}[1]{{#1}^\mathrm{out}}
\title{Monoidal Grothendieck Construction}
\author{Joe Moeller} 
\address{Department of Mathematics, University of California, Riverside, USA}
\email{moeller@math.ucr.edu}
\author{Christina Vasilakopoulou}
\address{Department of Mathematics, University of Patras, Greece}
\email{cvasilak@math.upatras.gr}
\begin{document}

\begin{abstract}
    We lift the standard equivalence between fibrations and indexed categories to an equivalence between monoidal fibrations and monoidal indexed categories, namely lax monoidal pseudofunctors to the 2-category of categories. Furthermore, we investigate the relation between this `global' monoidal version where the total category is monoidal and the fibration strictly preserves the structure, and a `fibrewise' one where the fibres are monoidal and the reindexing functors strongly preserve the structure, first hinted by Shulman. In particular, when the domain is cocartesian monoidal, we show how lax monoidal structures on a pseudofunctor to $\Cat$ bijectively correspond to lifts of the pseudofunctor to $\MonCat$. Finally, we give some examples where this correspondence appears, spanning from the fundamental and family fibrations to network models and systems.
\end{abstract}

\maketitle
\setcounter{tocdepth}{1}
\tableofcontents

\section{Introduction}

The Grothendieck construction \cite{Grothendieckcategoriesfibrees} exhibits one of the most fundamental relations in category theory, namely the equivalence between contravariant pseudofunctors into $\Cat$ and fibrations. 
This equivalence allows us to freely move between the worlds of indexed categories and fibred categories, providing access to tools and results from both. 
Due to its importance, it is only natural that one would be interested in possible extra structure these objects may have, and how the correspondence extends. 

The goal of this paper is to establish the appropriate correspondence in the monoidal setting. As the first benchmark, \cref{thm:mainthm} accomplishes this by lifting the standard equivalence $\ICat \simeq \Fib$ induced by the Grothendieck construction to an equivalence between the pseudomonoids in each 2-category. 
Using 2-categorical machinery, we obtain a canonical correspondence between \emph{monoidal fibrations} (fibrations which are strict monoidal functors with a cartesian lifting condition on the domain tensor product functor) and \emph{monoidal indexed categories} (lax monoidal pseudofunctors into $\Cat$). 
The monoidal Grothendieck construction in this sense employs the monoidal structure of the pseudofunctor to equip the corresponding total category with a monoidal product, which is strictly preserved by the fibration.

On a different but highly related note, Shulman introduced monoidal fibrations in \cite{FramedBicats} where he also explicitly constructed an equivalence between monoidal fibrations over a cartesian monoidal base and ordinary pseudofunctors into $\MonCat$; the latter were already called \emph{indexed (strong) monoidal categories} in \cite{DescentForMonads}. 
For this result, the existence of finite products was instrumental, making it impossible to extend it to arbitrary monoidal products. 
Moreover, the involved monoidal fibrations have monoidal fibre categories and strong monoidal reindexing functors between them, which is certainly not always the case for an arbitrary monoidal fibration. 

This striking dissimilarity between Shulman's equivalence and the one established here motivated an investigation regarding a `fibrewise' monoidal structure of a fibration as opposed to a `global' one. We  show that from a high level perspective, these structures are encompassed as pseudomonoids in different monoidal 2-categories: fixed-base fibrations $\Fib(\X)$ and arbitrary-base fibrations $\Fib$, see \cref{clarifyingdiag}. This crucial observation at that stage implies that these two distinct versions only meet when the base category has a (co)cartesian monoidal structure. Our key result and its 2-categorical proof, \cref{thm:fibrewise=global}, positions the general monoidal versus the special (co)cartesian case in their proper setting, and expresses an unforeseen bijection between ordinary pseudofunctors into $\MonCat$ and lax monoidal pseudofunctors into $(\Cat, \times, \1)$, \cref{lem:helplemma}.

This stimulating subtlety concerning the transfer of monoidality from the target category to the very structure of the functor and vice versa could potentially bring new perspective into further variations of the Grothendieck construction. As an example, in \cite{BeardsleyWong} the authors work towards a `fibrewise' enriched correspondence: under certain assumptions on a monoidal category $\catname{V}$, they establish a bijection between ordinary functors into $\catname{V}$-$\Cat$ and $\catname{V}$-functors over the free $\catname{V}$-category on the base. Future work could address the `global' enriched Grothendieck construction, namely one including an enriched (rather than ordinary) functor into $\Cat$ --- similarly to what is therein called \emph{fully} enriched correspondence --- and there is evidence that the monoidal correspondence of our current framework in fact underlies it.

Finally, the fact that the monoidal Grothendieck construction naturally arises in diverse settings is what motivated its theoretical clarification, thoroughly presented in this work. We gather a few examples in the last section of the paper so as to exhibit the various constructions concretely, and we are convinced that many more exist and would benefit from such a viewpoint. The examples include standard (op)fibrations like (co)domain and families classified in their monoidal contexts, as well as certain special algebraic cases of interest such as monoid-(co)algebras as objects in monoidal Grothendieck categories. For the special case of graphs, the monoidal Grothendieck correspondence is vigorously used to explicitly relate two distinct categorical frameworks for network theory, namely \emph{decorated cospans} and \emph{network models} via \cref{thm:decoratorsvsnetworkmodels}. Moreover, global categories of (co)modules for (co)monoids in any monoidal category, as well as (co)modules for (co)monads in monoidal double categories also naturally fit in this context. Finally, certain categorical approaches to systems theory employ algebras for monoidal categories, namely monoidal indexed categories, as their basic compositional tool for nesting of systems; clearly these also fall into place, giving rise to total monoidal categories of systems with new potential to be explored.

\subsection*{Outline of the paper}

In \cref{sec:preliminaries}, we review the basic theory of fibrations and indexed categories, as well as that of monoidal 2-categories and pseudomonoids. \cref{sec:mongroth} contains the eponymous construction in the form of 2-equivalences between the respective 2-categories of monoidal objects: \cref{sec:monfib,sec:monicat} contains elementary descriptions of (braided/symmetric) monoidal variations of fibrations and indexed categories, whereas \cref{sec:monequiv} details the relevant correspondences. In \cref{sec:fibrewisemonoidal}, we investigate the relation between the `global' and `fibrewise' monoidal Grothendieck construction for cartesian bases.
Finally, \cref{sec:applications} highlights some examples of this construction as it arises in various contexts, and \cref{sec:appendix} presents some of the earlier structures in greater detail.

\subsection*{Acknowledgements}

We would like to thank John Baez for invaluable guidance, as well as Mitchell Buckley, Nick Gurski, Claudio Hermida, Tom Leinster, Ignacio L\'opez Franco, Jade Master, Mike Shulman, Ross Street, David Spivak,  and Christian Williams for various helpful conversations. We also thank the careful reviewer for their extremely helpful comments; in particular, some non-trivial fixes on the split version of the correspondence are due to them. 
{CV} would like to thank the General Secretariat for Research and Technology (GSRT) and the Hellenic Foundation for Research and Innovation (HFRI).

\section{Preliminaries}\label{sec:preliminaries}

We assume familiarity with the basics of monoidal categories, see e.g.\ \cite{BraidedTensorCats}, as well as 2-category theory, see e.g.\ \cite{Review,2-catcompanion}.
We denote by $\TCat$ the paradigmatic example of a 3-category \cite{Gurskitricats} which consists of 2-categories, 2-functors, 2-natural transformations and modifications between them. If we take pseudofunctors $\ps{F} \colon \K \to \L$ between 2-categories, i.e.\ assignments that preserve the composition and identities up to coherent isomorphism, along with pseudonatural transformations $\ps{F} \Rightarrow \ps{G}$ between them, i.e.\ with components for which the usual naturality squares commute only up to coherent isomorphism, we obtain a tricategory denoted by $\TCat_{ps}$.

\subsection{Fibrations and Indexed Categories}\label{sec:fibrations}

We recall some basic facts and constructions from the theory of fibrations and indexed categories, as well as the equivalence between them via the Grothendieck construction. A few indicative references for the general theory are \cite{Grayfibredandcofibred, FibredAdjunctions, Handbook2, Jacobs, Elephant1}.

Consider a functor $P \maps \A \to \X$. A morphism $\phi \maps a \to b$ in $\A$ over a morphism $f = P(\phi) \maps x \to y$ in $\X$ is called \define{cartesian} if and only if, for all $g \maps x' \to x$ in $\X$ and $\theta \maps a'\to b$ in $\A$ with $P \theta = f \circ g$, there exists a unique arrow $\psi \maps a'\to a$ such that $P \psi = g$ and $\theta = \phi \circ \psi$:
\begin{displaymath}
    \xymatrix @R=.1in @C=.6in
    {a'\ar [drr]^-{\theta}\ar @{-->}[dr]_-{\exists!\psi} 
    \ar @{.>}@/_/[dd] &&& \\
    & a\ar[r]_-{\phi} \ar @{.>}@/_/[dd] & 
    b \ar @{.>}@/_/[dd] & \textrm{in }\A \\
    x'\ar [drr]^-{f\circ g=P\theta}\ar[dr]_-g &&&\\
    & x\ar[r]_-{f=P\phi} & y & \textrm{in }\X}
\end{displaymath}
For $x \in \ob\X$, the \define{fibre of $P$ over $x$} written $\A_x$, is the subcategory of $\A$ which consists of objects $a$ such that $P(a) = x$ and morphisms $\phi$ with $P(\phi) = 1_x$, called \define{vertical} morphisms. The functor $P \maps \A \to \X$ is called a \define{fibration} if and only if, for all $f \maps x \to y$ in $\X$ and $b\in\A_y$, there is a cartesian morphism $\phi$ with codomain $b$ above $f$; it is called a \define{cartesian lifting} of $b$ along $f$. The category $\X$ is then called the \define{base} of the fibration, and $\A $ its \define{total category}.

Dually, the functor $U \maps \C \to \X$ is an \define{opfibration} if $U^\mathrm{op}$ is a fibration, i.e.\ for every $c \in \C _x$ and $h \maps x \to y$ in $\X$, there is a cocartesian morphism with domain $c$ above $h$, the \define{cocartesian lifting} of $c$ along $h$ with the dual universal property:
\begin{displaymath}
\xymatrix @R=.1in @C=.6in
{&& d'\ar @{.>}@/_/[dd] &&\\
c\ar[r]_-{\beta} \ar @{.>}@/_/[dd]
\ar[urr]^-{\gamma} & 
d \ar @{.>}@/_/[dd] \ar @{-->}[ur]_-{\exists! \delta}
&& \textrm{in }\C\\
&& y' &&\\
x\ar[r]_-{h=U\beta} \ar[urr]^-{k\circ h=U\gamma}
 & y \ar[ur]_-k && \textrm{in }\X}
\end{displaymath}
A \define{bifibration} is a functor which is both a fibration and opfibration.

If $P\maps \A \to\X$ is a fibration, assuming the axiom of choice we may select a cartesian arrow over each $f\maps x\to y$ in $\X$ and $b\in\A _y$, denoted by $\Cart(f,b)\maps f^*(b)\to b$. Such a choice of cartesian liftings is called a \define{cleavage} for $P$, which is then called a \define{cloven fibration}; any fibration is henceforth assumed to be cloven. Dually, if $U$ is an opfibration, for any $c\in\C _x$ and $h \maps x \to y$ in $\X$ we can choose a cocartesian lifting of $c$ along $h$, $\Cocart(h,c)\maps c\longrightarrow h_!(c)$. 
The choice of (co)cartesian liftings in an (op)fibration induces a so-called \define{reindexing functor} between the fibre categories
\begin{equation}\label{reindexing}
    f^*\maps \A _y\to\A _x\quad\textrm{ and }\quad h_!\colon\C _x\to\C _y
\end{equation}
respectively, for each morphism $f\maps x\to y$ and $h\colon x\to y$ in the base category.
It can be verified by the (co)cartesian universal property that $1_{\A _x}\cong(1_x)^*$ 
and that for composable morphism in the base category, $g^*\circ f^*\cong(g\circ f)^* $, as well as $(1_x)_!\cong1_{\C_x}$ and $(k\circ h)_!\cong k_!\circ h_!$. If these isomorphisms are equalities, we have the notion of a \define{split} (op)fibration.

A \define{fibred 1-cell} $(H,F) \maps P \to Q$ between fibrations $P \maps \A \to \X$ and $Q \maps \B \to \Y$ is given by a commutative square of functors and categories \begin{equation}\label{commutativefibredcell}
    \xymatrix @C=.4in @R=.4in
    {\A \ar[r]^-H \ar[d]_-P &
    \B \ar[d]^-Q \\
    \X\ar[r]_-F &
    \Y}
\end{equation}
where the top $H$ preserves cartesian liftings, meaning that if $\phi$ is $P$-cartesian, then $H\phi$ is $Q$-cartesian. In particular, when $P$ and $Q$ are fibrations over the same base category, we may consider fibred 1-cells of the form $(H,1_{\X})$ displayed by
\begin{equation}\label{eq:fibredfunctor}
    \xymatrix @C=.2in
    {\A \ar[rr]^-H \ar[dr]_-P
    && \B \ar[dl]^-Q\\
    & \X &}
\end{equation}
and $H$ is then called a \define{fibred functor}. Dually, we have the notion of an \define{opfibred 1-cell} and \define{opfibred functor}. 
Notice that any such (op)fibred 1-cell induces functors between the fibres, by commutativity of \cref{commutativefibredcell}:
\begin{equation}\label{eq:functorbetweenfibres}
  H_{x}\colon\A_x\longrightarrow\B_{Fx}
\end{equation}

A \define{fibred 2-cell} between fibred 1-cells $(H,F)$ and $(K,G)$ is a pair of natural transformations ($\beta\maps H\Rightarrow K,\alpha\maps F\Rightarrow G$) with $\beta$ above $\alpha$, i.e.\ $Q(\beta_a)=\alpha_{Pa}$ for all $a\in\A $, displayed as
\begin{equation}\label{eq:fibred2cell}
    \xymatrix @C=.8in @R=.5in
    {\A \rtwocell^H_K{\beta}\ar[d]_-P
    & \B \ar[d]^-Q \\
    \X\rtwocell^F_G{\alpha} & \Y.}
\end{equation}
A \define{fibred natural transformation} is of the form $(\beta,1_{1_{\X}})\maps(H,1_{\X})\Rightarrow(K,1_\X)$
\begin{equation}\label{eq:fibrednaturaltrans}
    \xymatrix @C=.3in @R=.4in
    {\A \rrtwocell^H_K{\beta}\ar[dr]_-P
    && \B \ar[dl]^-Q\\
    & \X &}
\end{equation}
Dually, we have the notion of an \define{opfibred 2-cell} and \define{opfibred natural transformation} between opfibred 1-cells and functors respectively.

We thus obtain a 2-category $\Fib$ of fibrations over arbitrary base categories, fibred 1-cells and fibred 2-cells. There is also a 2-category $\Fib(\X)$ of fibrations over a fixed base category $\X$, fibred functors and fibred natural transformations. Dually, we have the 2-categories $\OpFib$ and $\OpFib(\X)$. Moreover, we also have 2-categories $\Fib_\spl$ and $\OpFib_\spl$ of split (op)fibrations, and (op)fibred 1-cells that preserve the cartesian liftings `on the nose'.

\begin{rmk}\label{rem:Fibisfibred}
Notice that $\Fib$ and $\OpFib$ are both sub-2-categories of $\Cat^\2 = [\2, \Cat]$, the arrow 2-category of $\Cat$. Similarly, $\Fib(\X)$ and $\OpFib(\X)$ are sub-2-categories of $\Cat/\X$, the slice 2-category of functors into $\X$. In fact,
both these categories form fibrations themselves \cite{hermida1999some}, see also \cref{prop:FibICatglobalfibrewise} and \cref{sec:fundamentalfib}; explicitly, $\cod\colon\Fib\to\Cat$ maps a fibration to its base. The \emph{2-fibration} structure is also explained in \cite[2.3.8]{2Fibs}.
\end{rmk}

We now turn to the world of indexed categories. Given an ordinary category $\X$, an $\X$-\define{indexed category} is a pseudofunctor \[\M \maps \X^\op \to \Cat\] where $\X$ is viewed as a 2-category with trivial 2-cells; it comes with natural isomorphisms $\delta_{g,f} \colon (\M g) \circ (\M f) \simrightarrow \M(g \circ f)$ and $\gamma_x \colon  1_{\M x} \simrightarrow \M (1_x)$ for every $x\in\X$ and composable morphisms $f$ and $g$, satisfying coherence axioms.
Dually, an $\X$-\define{opindexed category} is an $\X^\op$-indexed category, i.e.\ a pseudofunctor $\X \to \Cat$.
If an (op)indexed category strictly preserves composition, i.e.\ is a (2-)functor, then it is called \define{strict}.

An \define{indexed $1$-cell} $(F, \tau) \maps \M \to \N$ between indexed categories $\M \maps \X^\op \to \Cat$ and $\N \maps \Y^\op \to \Cat$ consists of an ordinary functor $F \maps \X \to \Y$ along with a pseudonatural transformation $\tau \maps \M \Rightarrow \N \circ F^\op$
\begin{equation}\label{eq:indexed1cell}
\begin{tikzcd}[column sep=.7in,row sep=.2in]
\X^\op\ar[dr, "\M"]\ar[dd, "F^\op"'] \\ \ar[r,phantom,"\Downarrow{\scriptstyle\tau}"description] & \Cat \\
\Y^\op \ar[ur,"\N"']
\end{tikzcd}
\end{equation}
with components functors $\tau_x\colon\M x\to\N Fx$, equipped with coherent natural isomorphisms $\tau_f\colon(\N Ff)\circ\tau_x\simrightarrow\tau_y\circ(\M f)$ for any $f\colon x\to y$ in $\X$.
For indexed categories with the same base, we may consider indexed 1-cells of the form $(1_\X, \tau)$
\begin{equation}\label{eq:ifun}
\begin{tikzcd}[column sep=.7in]
    \X^\op 
    \arrow[r, bend left, "\M"]
    \arrow[r, bend right, swap, "\N"]
    \arrow[r, phantom, "\Downarrow \scriptstyle \tau"]
    &
    \Cat
\end{tikzcd}
\end{equation}
which are called \define{indexed functors}.
Dually, we have the notion of an \define{opindexed 1-cell} and \define{opindexed functor}.

An \define{indexed 2-cell} $(\alpha,m)$ between indexed 1-cells $(F, \tau)$ and $(G, \sigma)$, pictured as 
\begin{displaymath}
 \begin{tikzcd}[column sep=.6in,row sep=.4in]
    \X^\op\ar[drr, "\M"]
    \ar[drr, ""{name = M}, swap, pos = 0.5]
     \ar[dd,bend right=40,"F^\op" description]
     \ar[dd, "G^\op"description, bend left=40]
    \arrow[dd,phantom,"{\scriptstyle\stackrel{\alpha^\op}{\Leftarrow}}"description] &&
    \\
& & \Cat
    \\
    \Y^\op
    \arrow[urr,"\N",swap]
    \arrow[urr,""{name = H}, pos = 0.4]
    \arrow[from = M, to = H, Rightarrow, "\sigma", pos = 0.4, bend left=45]
    \arrow[from = M, to = H, Rightarrow, "\tau", pos = 0.56, bend right=45,swap]
    \arrow[from = M, to = H, phantom, "\scriptscriptstyle\stackrel{m}{\Rrightarrow}"] &&
 \end{tikzcd} 
\end{displaymath}
consists of an ordinary natural transformation $\alpha \maps F \Rightarrow G$ and a modification $m$
\begin{equation}\label{eq:indexed2cell}
\begin{tikzcd}[column sep=.5in,row sep=.2in]
    \X^\op
    \ar[rr,bend left,"\M"]
    \ar[dr,bend right=10,"F^\op"']
    \ar[rr,phantom,"\Downarrow{\scriptstyle \tau}"description]  
    && 
    \Cat \ar[r,phantom,"\stackrel{m}{\Rrightarrow}"description] 
    & 
    \X^\op
    \ar[rr,bend left=30,"\M"]
    \ar[dr,bend left=35,"G^\op"]
    \ar[dr,bend right=35,"F^\op"']
    \ar[rr, phantom, near end, "\Downarrow{\scriptstyle \sigma}"description]
    \ar[dr,phantom, "\Downarrow{\scriptstyle \alpha^\op}"description]
    && 
    \Cat 
    \\
    & 
    \Y^\op
    \ar[ur,bend right=10,"\N"'] 
    &&& 
    \Y^\op
    \ar[ur,bend right,"\N"'] &
\end{tikzcd}
\end{equation}
given by a family of natural transformations $m_x \colon\tau_x \Rightarrow\N \alpha_x \circ\sigma_x$. Notice that taking opposites is a 2-functor $(-)^\op\colon\Cat\to\Cat^{co}$, on which the above diagrams rely.
An \define{indexed natural transformation} between two indexed functors is an indexed 2-cell of the form $(1_{1_\X},m)$.
Dually, we have the notion of an \define{opindexed 2-cell} and \define{opindexed natural transformation} between opindexed 1-cells and functors respectively.

Notice that an indexed 2-cell $(\alpha,m)$ is invertible if and only if both $\alpha$ is a natural isomorphism and the modification $m$ is invertible, due to the way vertical composition is formed. 

We obtain a 2-category $\ICat$ of indexed categories over arbitrary bases, indexed 1-cells and indexed 2-cells. In particular, there is a 2-category $\ICat(\X)$ of indexed categories with fixed domain $\X$, indexed functors and indexed natural transformations, which coincides with the functor 2-category $\TCat_\pse(\X^\op,\Cat)$.

Dually, we have the 2-categories $\OpICat$ and $\OpICat(\X)=\TCat_\pse(\X,\Cat)$. Notice that due to the absence of opposites in the world of opindexed categories, opindexed 2-cells have a different form than \cref{eq:indexed2cell}, namely
\begin{displaymath}
\begin{tikzcd}[column sep=.5in,row sep=.15in]
    \X
    \ar[rr, bend left=30, "\M"]
    \ar[dr, bend left=30, "F"]
    \ar[dr, bend right=30, "G"']
    \ar[rr, phantom, near end, "\Downarrow{\scriptstyle\tau}"description]
    \ar[dr, phantom, "\Downarrow{\scriptstyle\alpha}"description]  
    && 
    \Cat 
    \ar[r, phantom, "\stackrel{m}{\Rrightarrow}"description] 
    & 
    \X
    \ar[rr, bend left, "\M"]
    \ar[dr, bend right=10, "G"']
    \ar[rr, phantom, "\Downarrow{\scriptstyle \sigma}"description]
    && 
    \Cat 
    \\& 
    \Y
    \ar[ur,bend right,"\N"'] 
    &&& 
    \Y
    \ar[ur,bend right=10,"\N"'] &
\end{tikzcd}
\end{displaymath}
Moreover, we have 2-categories of strict (op)indexed categories and (op)indexed 1-cells that consist of strict natural transformations $\tau$ \cref{eq:indexed1cell}, i.e.\ $\ICat_\spl(\X)=[\X^\op,\Cat]$ and $\OpICat_\spl(\X)=[\X,\Cat]$ the usual functor 2-categories.

\begin{rmk}\label{rem:ICatisfibred}
    Similarly to \cref{rem:Fibisfibred}, notice that these (1-)categories also form fibrations over $\Cat$, this time essentially using the family fibration also seen in \cref{sec:familyfib}. The functor $\ICat\to \Cat$ is a split fibration that sends an indexed category to its domain and an indexed 1-cell to its first component as in \cref{prop:FibICatglobalfibrewise}. In fact, it is also a 2-fibration as explained in \cite[2.3.2]{2Fibs}.
\end{rmk}

In the first volume of the \emph{S\'eminaire de G\'eom\'etrie Alg\'ebrique du Bois Marie}  \cite{Grothendieckcategoriesfibrees}, Grothendieck introduced a construction for a fibration $P_\M \maps \inta \M \to \X$ from a given indexed category $\M \maps \X^\op \to \Cat$ as follows. If $\delta$ and $\gamma$ are the structure pseudonatural transformations of the pseudofunctor $\M$,
the total category $\inta \M$ has
\begin{itemize}
    \item objects $(x,a)$ with $x \in \X$ and $a \in \M x$;
    \item morphisms $(f,k) \maps (x,a) \to (y,b)$ with $f \maps x \to y$ a morphism in $\X$, and $k \maps a \to (\M f)(b)$ a morphism in $\M x$;
    \item composition $(g, \ell) \circ (f, k)\colon (x,a)\to(y,b)\to(z,c)$ is given by $g \circ f\colon a\to b\to c$ in $\X$ and 
    \begin{equation}\label{eq:comp_intM}
     a\xrightarrow{k}(\M f)(b)\xrightarrow{(\M g)(\ell)}(\M g\circ\M f)(c)\xrightarrow{(\delta_{f,g})_c}\M(g\circ f)(c)\quad\textrm{in }\M x;
    \end{equation}
    \item unit $1_{(x,a)}\colon (x,a)\to(x,a)$ is given by $1_x\colon x\to x$ in $\X$ and \[a=1_{\M x}a\xrightarrow{(\gamma_x)_a}(\M 1_x)(a)\quad\textrm{in }\M x.\]
\end{itemize}

The fibration $P_\M \maps \inta \M \to \X$ is given by $(x,a) \mapsto x$ on objects and $(f,k) \mapsto f$ on morphisms, and the cartesian lifting of 
any $(y,b)$ in $\inta \M$ along $f\colon x\to y$ in $\X$ is precisely $(f,1_{(\M f)b})$. Its fibres are precisely $\M x$ and the reindexing functors between them are $\M f$.

In the other direction, given a (cloven) fibration $P \maps \A \to \X$, we can define an indexed category $\M_P \maps \X^\op \to \Cat$ that sends each object $x$ of $\X$ to its fibre category $\A_x$, and each morphism $f \maps x \to y$ to the corresponding reindexing functor $f^* \maps \A_y \to \A_x$ as in \cref{reindexing}. The isomorphisms of cartesian liftings $f^* \circ g^* \cong (g \circ f)^*$ and $1_{\A_x} \cong 1_x^*$  render this assignment pseudofunctorial.

Details of the above, as well as the correspondence between 1-cells and 2-cells can be found in the provided references. Briefly, given a pseudonatural transformation $\tau\colon\M\to\N\circ F^\op$ \cref{eq:indexed1cell} with components $\tau_x\colon\M x\to\N Fx$, define a functor $P_\tau\colon\inta\M\to\inta\N$ 
mapping $(x\in\X,a\in\M x)$ to the pair $(Fx\in\Y,\tau_x(a)\in\N Fx)$ and accordingly for arrows. This makes the square
\begin{equation}\label{eq:inducedfibred1cell}
\begin{tikzcd}
\inta\M\ar[r,"P_\tau"]\ar[d,"P_\M"'] & \inta\N\ar[d,"P_\N"] \\
\X\ar[r,"F"'] & \Y
\end{tikzcd}
\end{equation}
commute, and moreover $P_\tau$ preserves cartesian liftings due to pseudonaturality of $\tau$. Moreover, given an indexed 2-cell $(\alpha,m)\colon(F,\tau)\Rightarrow(G,\sigma)$ as in \cref{eq:indexed2cell}, we can form a fibred 2-cell
\begin{equation}\label{eq:inducedfibred2cell}
\begin{tikzcd}[column sep=.8in,row sep=.6in]
\inta\M\ar[r,bend left,"P_\tau"]\ar[r,bend right,"P_\sigma"']\ar[r,phantom,"\Downarrow{\scriptstyle P_m}"description]\ar[d,"P_\M"'] & \inta\N\ar[d,"P_\N"] \\
\X\ar[r,bend left,"F"]\ar[r,bend right,"G"']\ar[r,phantom,"\Downarrow{\scriptstyle\alpha}"description] & \Y
\end{tikzcd}
\end{equation}
where $\alpha\colon F\Rightarrow G$ is piece of the given structure, whereas $P_m$ is given by components 
\[(P_m)_{(x,a)}\colon P_\tau(x,a)=(Fx,\tau_xa)\to P_\sigma(x,a)=(Gx,\sigma_xa)\quad\textrm{in } \inta \N\]
explicitly formed by $\alpha_x\colon Fx\to Gx$ in $\Y$ and $(m_x)_a\colon\tau_xa\to(\N\alpha_x)\sigma_xa$ in $\N Fx$.

The following theorem summarizes these standard results.

\begin{thm}\label{thm:Grothendieck}
    \leavevmode
    \begin{enumerate}
        \item Every fibration $P \maps \A \to \X$ gives rise to a pseudofunctor $\M_P \maps \X^\op \to\Cat$.
        \item Every indexed category $\M \maps \X^\op \to \Cat$ gives rise to  a fibration $P_\M \colon \inta\M\to\X$.
        \item The above correspondences yield an equivalence of 2-categories 
        \begin{displaymath}
            \ICat(\X) \simeq \Fib(\X)
        \end{displaymath}
        so that $\M_{P_\M} \cong \M$ and $P_{\M_P} \cong P$.
        \item The above 2-equivalence extends to one between 2-categories of arbitrary-base fibrations and arbitrary-domain indexed categories
        \begin{displaymath}
        \ICat\simeq\Fib    
        \end{displaymath}
    \end{enumerate}
\end{thm}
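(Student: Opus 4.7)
The constructions in parts (1) and (2) have already been set up in the preceding exposition, so the proof proposal focuses on verifying pseudofunctoriality, the pseudo-inverse relations in (3), and the extension to arbitrary bases in (4).

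The plan is to assemble the constructions into a pair of 2-functors
\[
\mathcal{G}\colon \ICat(\X)\to\Fib(\X),\qquad \mathcal{H}\colon \Fib(\X)\to\ICat(\X),
\]
where $\mathcal{G}$ sends $\M\mapsto (P_\M\colon\inta\M\to\X)$, an indexed functor $\tau$ to the fibred functor $P_\tau$ of \cref{eq:inducedfibred1cell}, and an indexed natural transformation with component modification $m$ to the fibred natural transformation of \cref{eq:inducedfibred2cell}; dually, $\mathcal{H}$ sends $P\mapsto \M_P$, a fibred functor $H$ to the indexed functor with components $H_x\colon\A_x\to\B_x$ as in \cref{eq:functorbetweenfibres}, and a fibred natural transformation to the corresponding modification of vertical components. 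First I would check that $P_\M$ is actually a fibration by exhibiting $(f,1_{(\M f)b})$ as a cartesian lifting, and conversely that $\M_P$ is pseudofunctorial using the canonical isomorphisms $1_{\A_x}\cong(1_x)^*$ and $g^*\circ f^*\cong(g\circ f)^*$ coming from the universal property of cartesian liftings.

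Next, for (3), I would produce the object-level pseudo-inverses. The isomorphism $\M_{P_\M}\cong\M$ follows because objects of the fibre $(\inta\M)_x$ are pairs $(x,a)$ with $a\in\M x$ and vertical morphisms $(1_x,k)$ where $k\colon a\to(\M 1_x)(b)$; composing with $(\gamma_x)_b^{-1}$ gives a natural isomorphism of categories, and the reindexing functors match by construction. For $P_{\M_P}\cong P$, there is an evident comparison functor $\inta\M_P\to\A$ sending $(x,a)$ to $a\in\A_x\subseteq\A$ and $(f,k)\colon(x,a)\to(y,b)$ to the composite $a\xrightarrow{k}f^*b\xrightarrow{\Cart(f,b)}b$; this is an isomorphism over $\X$ preserving cartesian liftings, with inverse using chosen liftings. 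The required 2-natural isomorphisms $\mathcal{GH}\cong\mathrm{id}$ and $\mathcal{HG}\cong\mathrm{id}$ then follow by checking that these object-level isomorphisms are natural in 1- and 2-cells, which is an unpacking of the definitions.

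Finally, for (4), the essential observation is that an indexed 1-cell $(F,\tau)$ produces a fibred 1-cell $(P_\tau, F)$ precisely because pseudonaturality of $\tau$, encoded by the isomorphisms $\tau_f\colon(\N Ff)\circ\tau_x\cong\tau_y\circ(\M f)$, is exactly what is needed for $P_\tau$ to send cartesian liftings $(f,1_{(\M f)b})$ in $\inta\M$ to arrows that are $P_\N$-cartesian above $Ff$. Conversely, a fibred 1-cell $(H,F)$ yields a pseudonatural transformation with components $H_x$ from \cref{eq:functorbetweenfibres}, whose naturality isomorphisms come from the universal property of cartesian liftings applied to $H$ of a chosen cartesian arrow versus the chosen cartesian arrow in $\B$ over $Ff$. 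An analogous computation for 2-cells, using \cref{eq:indexed2cell} and \cref{eq:fibred2cell}, completes the extension. The main obstacle here is the bookkeeping of coherences: one has to verify that the horizontal and vertical composition structures on both sides are preserved, and that the pseudo-inverses of (3) are compatible with the base-change $(F,-)$; however, these verifications are routine applications of the cartesian universal property once the assignments on 1- and 2-cells are set up carefully.
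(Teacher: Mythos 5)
Your proposal is correct and follows essentially the same route as the paper, which presents exactly these constructions ($P_\M$ with cartesian liftings $(f,1_{(\M f)b})$, $\M_P$ via reindexing, $P_\tau$ from pseudonaturality of $\tau$, and $P_m$ from an indexed 2-cell) immediately before the theorem and defers the remaining verifications to the standard references. The comparison isomorphisms and the arbitrary-base extension you describe are the standard ones and are consistent with the paper's setup.
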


If we combine the above with \cref{rem:Fibisfibred} and \cref{rem:ICatisfibred} which point out that the 2-categories $\Fib$ and $\ICat$ are fibred over $\Cat$ with fibres $\Fib(\X)$ and $\ICat(\X)$ respectively, we obtain the following $\Cat$-fibred equivalence
\begin{displaymath}
\begin{tikzcd}
    \ICat
    \arrow[rr, "\simeq"]
    \arrow[dr]
    &&
    \Fib
    \arrow[dl]
    \\&
    \Cat
\end{tikzcd}
\end{displaymath}
There is an analogous story for opindexed categories and opfibrations that results
into a 2-equivalences $\OpICat(\X)\simeq\OpFib(\X)$ and $\OpICat\simeq\OpFib$, as well as for the split versions of (op)indexed and (op)fibred categories.

\subsection{Monoidal 2-categories and pseudomonoids}\label{sec:Monoidal2cats}

Below we sketch some basic definitions and constructions relative to monoidal 2-categories, necessary for what follows; relevant references where explicit axioms can be found are~\cite{Carmody, CoherenceTricats, Monoidalbicatshopfalgebroids, Mccruddencoalgebroids,Gurskitricats}.

A \define{monoidal bicategory} $\K$ comes equipped with a pseudofunctor
$\otimes \colon \K \times \K \to \K$ and a unit object $I \colon \mathbf{1} \to \K$ which are associative and unital up to coherent equivalence; a \define{monoidal 2-category} is one whose underlying bicategory is really a 2-category. In fact any monoidal bicategory is monoidally biequivalent to a special monoidal 2-category, namely a Gray monoid, by the one-object case of coherence for tricategories. A \define{2-monoidal 2-category} is a monoidal 2-category whose tensor product is a 2-functor rather than a pseudofunctor. Although our main examples in this paper are of the latter kind, namely the cartesian monoidal 2-categories of fibrations and indexed categories, the weaker structures are also required as they arise in the monoidal variants of the Grothendieck construction.

A \define{(weakly) lax monoidal pseudofunctor}
$\ps{F} \colon \K \to \L$ (called \emph{weak monoidal} in \cite{Monoidalbicatshopfalgebroids}) between monoidal 2-categories is a pseudofunctor equipped with pseudonatural transformations
\begin{equation}\label{eq:weakmonpseudo}
    \begin{tikzcd}[row sep=.2in]
        \K \times \K \ar[dr,phantom,"\Downarrow\scriptstyle\mu"]\ar[r,"\ps{F} \times \ps{F}"] \ar[d,"\ot_\K"']
        & |[alias=doma]| \L \times \L \ar[d,"\ot_\L"] \\
        |[alias=coda]| \K \ar[r,"\ps{F}"'] & \L
    \end{tikzcd}
    \quad\quad
    \begin{tikzcd}[row sep=.2in]
        \1\ar[d,"I_{\K}"']\ar[dr,phantom,bend right=2,"\Downarrow\scriptstyle\mu_0"]\ar[dr,bend left,"I_{\L}"{name=doma}] \\
        |[alias=coda]| \K\ar[r,"\ps{F}"'] & \L
    \end{tikzcd} 
\end{equation}
with components $\mu_{a,b} \colon \ps{F} a \otimes \ps{F} b \to \ps{F} (a \otimes b)$, $\mu_0 \colon I \to \ps{F} I$, along with invertible modifications
\begin{displaymath}
\scalebox{0.85}{
\begin{tikzcd}[column sep = 15,ampersand replacement=\&]
\& \L{\times}\L{\times}\L \arrow[d, phantom, "\Downarrow{\scriptstyle\mu{\times}1}"]
\arrow[r, "\otimes_\L{\times}1"] \&
\L{\times}\L
    \arrow[dd, phantom, "\Downarrow{\scriptstyle\mu}"]
    \arrow[dr, "\otimes_\L"]
    \&\&\&
\L{\times}\L{\times}\L
\arrow[dd, phantom, "\Downarrow{\scriptstyle 1{\times}\mu}"]
\arrow[r, "\otimes_\L{\times}1"]
\arrow[dr, "1{\times}\otimes_\L"description]
\& \L{\times}\L
\arrow[d, phantom, "{\scriptstyle\simeq}"]
\arrow[dr, "\otimes_\L"]
    \\
\K{\times}\K{\times}\K
    \arrow[ur, "\F{\times}\F{\times}\F"]
    \arrow[r, "\otimes_\K{\times}1"]
    \arrow[dr, "1{\times}\otimes_\K", swap]
    \&
    \K{\times}\K
    \arrow[ur, "\F{\times}\F"description]
    \arrow[dr, "\otimes_\K"description]
    \&\&
    \L
    \arrow[r, phantom, "\stackrel{\omega}{\Rrightarrow}"]
    \&
    \K{\times}\K{\times}\K
    \arrow[ur, "\F{\times}\F{\times}\F"]
    \arrow[dr, "1{\times}\otimes_\K", swap]
    \&\&
    \L{\times}\L
    \arrow[d, phantom, "\Downarrow{\scriptstyle\mu}"]
    \arrow[r, "\otimes_\L"]
    \&
    \L
    \\ \&
    \K{\times}\K
    \arrow[u, phantom, "{\scriptstyle\simeq}"]
    \arrow[r, "\otimes_\K", swap]
    \&
\K \arrow[ur, "\F", swap]
    \&\&\&
    \K{\times}\K
    \arrow[ur, "\F{\times}\F"description]
    \arrow[r, "\otimes_\K", swap]
    \&
    \K
    \arrow[ur, "\F", swap]
\end{tikzcd}}
\end{displaymath}
\begin{displaymath}
\scalebox{0.85}{\begin{tikzcd}[column sep=.25in,ampersand replacement=\&]
    \K
    \ar[dr, "1\times I"']
    \ar[rr, "\F\times I"{name=doma}]
    \ar[rrd, bend right=70, "1"', "\simeq"]
    \ar[rrr, bend left=30, "\F"]
    \ar[rrr, phantom, bend left=15, "{\scriptstyle\simeq}"description] 
    \&\& 
    \L\times\L
    \ar[r, "\otimes_\L"'] 
    \& 
    \L
    \arrow[Rightarrow, from=doma, to=coda, "1\times\mu_0\;\;"', shorten <=.5em, shorten >=.5em]
    \\ \& 
    |[alias=coda]|\K\times\K
    \ar[r, "\otimes_\K"']
    \ar[ur, "\F\times\F"description]
    \ar[urr, phantom, "\Downarrow{\scriptstyle \mu}"description] 
    \&
    \K
    \ar[ur, "\F"'] 
    \&
\end{tikzcd}}
\stackrel{\zeta}{\Rrightarrow}
\scalebox{0.85}{\begin{tikzcd}[column sep=.25in,ampersand replacement=\&]
    \hole \\
    \K
    \ar[rr,bend left,"\F"]
    \ar[rr,bend right,"\F"']
    \ar[rr,phantom,"\Downarrow{\scriptstyle1}"description] 
    \&\& 
    \L 
    \\
    \hole
\end{tikzcd}}
\stackrel{\xi}{\Rrightarrow}
\scalebox{0.85}{\begin{tikzcd}[column sep=.25in,ampersand replacement=\&]
    \K
    \ar[dr,"I\times1"']
    \ar[rr,"I\times\F"{name=doma}]
    \ar[rrd, bend right=70, "1"',"\simeq"]
    \ar[rrr,bend left=30,"\F"]
    \ar[rrr,phantom,bend left=15,"{\scriptstyle\simeq}"description] 
    \&\& 
    \L\times\L
    \ar[r,"\otimes_\L"'] 
    \& 
    \L
    \arrow[Rightarrow, from=doma, to=coda,"\mu_0\times1\;\;"',shorten <=.5em, shorten >=.5em]
    \\ \& 
    |[alias=coda]|\K\times\K
    \ar[r,"\otimes_\K"']
    \ar[ur,"\F\times\F"description]
    \ar[urr,phantom,"\Downarrow{\scriptstyle \mu}"description] 
    \&
    \K
    \ar[ur,"\F"'] 
    \&
\end{tikzcd}}
\end{displaymath}
with components
\begin{equation}\label{eq:omega}
\begin{tikzcd}[column sep=.8in, row sep=.15in]
(\F a\ot \F b)\ot\F c\ar[ddr,phantom,"{\stackrel{\omega_{a,b,c}}{\cong}}"]\ar[r,"\mu_{a,b}\ot1"]\ar[d,"\simeq"'] & \F (a\ot b)\ot\F c\ar[d,"\mu_{a\ot b,c}"] \\
\F a\ot( \F b\ot \F c)\ar[d,"1\ot\mu_{b,c}"'] & 
\F ((a\ot b)\ot c)\ar[d,"\simeq"] \\
\F a\ot\F(b\ot c)\ar[r,"\mu_{a,b\ot c}"'] &
\F (a\ot(b\ot c))
\end{tikzcd}
\end{equation}
\begin{displaymath}
\begin{tikzcd}[column sep=.3in,row sep=.15in]
\F a\ar[r,"\simeq"]\ar[drr,bend right=5,"\simeq"'] & \F a\otimes I\ar[r,"1\otimes\mu_0"]\ar[dr,phantom,"\stackrel{\zeta_a}{\cong}"] & \F a\otimes \F I\ar[d,"\mu_{a,I}"] \\
&& \F(a\ot I)
\end{tikzcd}\quad
\begin{tikzcd}[column sep=.3in,row sep=.15in]
\F a\ar[r,"\simeq"]\ar[drr,bend right=5,"\simeq"'] & I\otimes\F a \ar[r,"\mu_0\otimes1"]\ar[dr,phantom,"\stackrel{\xi_a}{\cong}"] & \F I\otimes \F a\ar[d,"\mu_{I,a}"] \\
&& \F(I\ot a)
\end{tikzcd}
\end{displaymath}
subject to coherence conditions. 

A \define{(weakly) monoidal pseudonatural transformation} $\tau \colon \ps{F} \Rightarrow \ps{G}$ between two lax monoidal pseudofunctors $(\F,\mu,\mu_0)$ and $(\G,\nu,\nu_0)$ is a pseudonatural transformation equipped with two invertible modifications
\begin{equation}\label{eq:monpseudonat}
\begin{tikzcd}[column sep = .15in, row sep = .1in]
   \K \times \K 
   \ar[rr, bend left, "\ps{F} \times \ps{F}"] 
   \ar[rr, bend right, "\ps{G} \times \ps{G}"'] 
   \ar[dd, "\otimes"'] 
   \ar[ddrr, phantom, bend right = 15, "{\scriptstyle\Downarrow \nu}"'] 
   \ar[rr, phantom, description, "{\scriptstyle\Downarrow \tau \times \tau}"] 
   && 
   \L \times \L
   \ar[dd, "\otimes"] \ar[ddr,phantom,"\stackrel{u}{\Rrightarrow}"]
   &
   \K \times \K 
   \ar[rr, "\F \times \F"]
   \ar[dd, "\otimes"']
   \ar[ddrr, phantom, bend left=15, "{\scriptstyle\Downarrow \mu}"] 
   && 
   \L \times \L 
   \ar[dd, "\otimes"] 
   \\ \hole
   \\
   \K
   \ar[rr, "\ps{G}"'] 
   && 
   \L 
   &
   \K
   \ar[rr, bend left, "\ps{F}"]
   \ar[rr, bend right, "\ps{G}"']
   \ar[rr, phantom, description, "{\scriptstyle\Downarrow\tau}"] 
   && 
   \L
\end{tikzcd}\qquad
\begin{tikzcd}[row sep = .1in, column sep = .2in]
   \1
    \ar[ddrr, phantom, "{\scriptstyle\Downarrow \nu_0}" description]
    \ar[rr, "I_\L"]
    \ar[dd, "I_\K"'] 
    && 
    \L\ar[ddr,phantom,bend right=5,"\stackrel{u_0}{\Rrightarrow}"]
    &
    \1
    \ar[ddrr, phantom,near start,bend left=10, "{\scriptstyle\Downarrow \mu_0}" description]
    \ar[rr, "I_\L"]
    \ar[dd, "I_\K"']
    && 
    \L 
    \\ \hole \\
   \K
    \ar[uurr, bend right = 40,
    "\G"']  
    &&\phantom{A}& 
     \K 
    \ar[uurr, "\F" description]
    \ar[uurr, bend right=60, "\G"']
    \ar[uurr, phantom, bend right, "{\scriptstyle \Downarrow\tau}" description]
    && 
    \phantom{A}
\end{tikzcd}
\end{equation}
that consist of natural isomorphisms with components
\begin{equation}\label{eq:monpseudocomponents}
u_{a,b}\colon\nu_{a,b} \circ (\tau_a \otimes \tau_b) \xrightarrow{\sim} \tau_{a \otimes b} \circ \mu_{a,b},\quad
u_0\colon\nu_0 \xrightarrow{\sim}\tau_I \circ \mu_0 
\end{equation}
satisfying coherence conditions.

A \define{monoidal modification}
between monoidal pseudonatural transformations $(\tau,u,u_0)$ and $(\sigma,v,v_0)$ is a modification
\begin{displaymath}
\begin{tikzcd}
\K\ar[rr,bend left=40,"\F",""'{name = F}]\ar[rr,bend right=40,"\G"',""{name = G}] \ar[rr,phantom,"\stackrel{m}{\Rrightarrow}"description] && \L
 \arrow[from = F, to = G, Rightarrow, "\tau"',bend right=50]
  \arrow[from = F, to = G, Rightarrow, "\sigma",bend left=50]
\end{tikzcd}
\end{displaymath}
which consists of pseudonatural transformations $m_a\colon\tau_a\Rightarrow\sigma_a$ compatible with the monoid\-al structures, in the sense that
\begin{equation}\label{eq:monoidalmodaxioms}
\scalebox{0.9}{\begin{tikzcd}[column sep=.27in,ampersand replacement=\&]
\& \G a\ot \G b\ar[dr,bend left,"\nu_{a,b}"] \&\&\&\G a\ot \G b \ar[dr,bend left,"\nu_{a,b}"] 
\\
\F a\ot \F b\ar[ur,bend left,"\tau_a\ot \tau_b"]\ar[dr,bend right,"\mu_{a,b}"']\ar[rr,phantom,near start,"\Downarrow{\scriptstyle u_{a,b}}"description] \&\& \G (x\ot y) \ar[r,phantom,"="description] \& \F a\ot \F b\ar[dr,bend right,"\mu_{a,b}"']\ar[ur,bend right,"\sigma_a\ot \sigma_b"']\ar[rr,phantom,near end,"\Downarrow{\scriptstyle v_{a,b}}"description]\ar[ur,bend left,"\tau_a\ot\tau_b"]\ar[ur,phantom,"\Downarrow{\scriptstyle m_a\ot m_b}"description] \&\& \G (a\ot b) 
\\
\& \F(a\ot b)\ar[ur,bend left,"\tau_{a\ot b}"]\ar[ur,bend right,"\sigma_{a\ot b}"']\ar[ur,phantom,"\Downarrow{\scriptstyle m_{a\ot b}}"description] \&\&\& \F(a\ot b)\ar[ur,bend right,"\sigma_{a\ot b}"']
\end{tikzcd}}
\end{equation}
\begin{displaymath}
\scalebox{0.9}{\begin{tikzcd}[row sep=.2in,ampersand replacement=\&]
I\ar[dr,bend right,"\mu_0"']\ar[rr,bend left,"\nu_0"]\ar[rr,phantom,near start,"\Downarrow{\scriptstyle v_0}"description] \&\& \G(I)\ar[r,phantom,"="description] \& I\ar[rr,phantom,"\Downarrow{\scriptstyle u_0}"description]\ar[dr,bend right,"\mu_0"']\ar[rr,bend left,"\nu_0"] \&\& \G(I) \\
\& \F(I)\ar[ur,bend left,"\tau_I"]\ar[ur,bend right,"\sigma_I"']\ar[ur,phantom,"\Downarrow{\scriptstyle m_I}"description] \&\&\& \F(I)\ar[ur,bend right,"\tau_I"'] \&
\end{tikzcd}}
\end{displaymath}

For any monoidal 2-categories $\K,\L$, these form a 2-category $\MonTCat_\pse(\K,\L)$; if $\K$ and $\L$ were monoidal bicategories, this bicategory is denoted by $\namedcat{WMonHom}(\K,\L)$ in \cite{Monoidalbicatshopfalgebroids} or $\mathsf{MonBicat}(\K,\L)$ in \cite{PeriodicII}. If we take weakly lax monoidal 2-functors, i.e. whose underlying pseudofunctors are 2-functors \cite{Mccruddencoalgebroids}, and weakly monoidal 2-transfor\-ma\-tions, the corresponding sub-2-category is denoted by $\MonTCat(\K,\L)$. This should not be confused with the stricter notions of lax monoidal 2-functors and monoidal 2-natural transformations, for which \cref{eq:weakmonpseudo} are 2-natural and \cref{eq:omega,eq:monpseudocomponents} are identities, corresponding to the monoidal $\Cat$-enriched definitions. Notice that in the later \cref{sec:monicat}, in the strict context we will come across an intermediate notion of a weakly lax monoidal 2-functor whose structure maps \cref{eq:weakmonpseudo} are strictly natural but \cref{eq:omega} are still isomorphisms.

A \define{pseudomonoid} (or \define{monoidale}) in a monoidal 2-category $(\K, \otimes, I)$ is an object $a$ equipped with multiplication $\mlt \maps a\ot a\to a$,  unit $\uni \maps I \to a$, and invertible 2-cells
\begin{equation}\label{alphalambdarho}
\begin{tikzcd}
    a \otimes a \otimes a
    \ar[r, "1 \otimes \mlt"]
    \ar[d, "\mlt \otimes 1"']
    \ar[dr, phantom, "\scriptstyle \stackrel{\alpha}{\cong}"description]
    & 
    a \otimes a \ar[d, "\mlt"] \ar[d, "\mlt"]
    & a \ar[r, "1 \otimes \uni"] \ar[dr,"1"'] 
    & a \otimes a \ar[d, "\mlt", "\stackrel{\lambda}{\cong}\quad"'near start, "\quad\;\stackrel{\rho}{\cong}"near start] 
    & a \ar[l, "\uni \otimes 1"']
    \ar[dl,"1"]  \\
    a \otimes a\ar[r, "\mlt"'] 
    & a && a &
\end{tikzcd}
\end{equation}
expressing assiociativity and unitality up to isomorphism, that satisfy appropriate coherence conditions; we omitted the associativity and unit constraints of $\K$.  A \define{lax morphism} between pseudomonoids $a, b$ is a 1-cell $f\colon a\to b$ equipped with 2-cells
    \begin{equation}\label{eq:laxmorphism}
    \begin{tikzcd}
        a \otimes a\ar[d,"\mlt"']\ar[dr,phantom,"\Downarrow\scriptstyle\phi"]\ar[r,"f\otimes f"] & |[alias=doma]| b\otimes b\ar[d,"\mlt"] \\
        |[alias=coda]| a\ar[r,"f"'] & b
    \end{tikzcd}
    \qquad
    \begin{tikzcd}
        I\ar[d,"\uni"']\ar[dr,bend left,"\uni"{name=doma}]\ar[dr,phantom,bend right=2,"\Downarrow\scriptstyle\phi_0"] \\
        |[alias=coda]| a\ar[r,"f"'] & b
    \end{tikzcd}
    \end{equation}
    such that the following conditions hold:
    \begin{equation}\label{eq:axiomslaxmorphism}
    \adjustbox{scale=.85,center}{
        \begin{tikzcd}[column sep=.3in]
            & b\ot b\ot b\ar[r,"\mlt\ot1"]\ar[d,phantom,"\Downarrow{\scriptstyle\phi\otimes1_f}"description] & b\ot b\ot a\ar[dr,"\mlt"] & \\
            a\otimes a\otimes a\ar[r,"\mlt\otimes1"]\ar[dr,"1\otimes\mlt"']\ar[ur,"f\otimes f\otimes f"] &
            a\otimes a\ar[dr,"\mlt"]\ar[ur,"f\otimes f"']\ar[rr,phantom,"\Downarrow{\scriptstyle\phi}"description]
            \ar[d,phantom,"\stackrel{\alpha}{\cong}"description] && b \\
            & a\ot a\ar[r,"\mlt"'] & a\ar[ur,"f"']
        \end{tikzcd} 
        =
        \begin{tikzcd}[column sep=.3in]
            & b\ot b\ot b\ar[r,"\mlt\ot1"]\ar[dr,"1\ot\mlt"']\ar[dd,phantom,"\Downarrow{\scriptstyle1_f\otimes\phi}"description] &
            b\ot b\ar[dr,"\mlt"]\ar[d,phantom,"\stackrel{\alpha}{\cong}"description] & \\
            a\otimes a\otimes a\ar[ur,"f\otimes f\otimes f"]\ar[dr,"1\otimes\mlt"'] & & b\otimes b\ar[r,"\mlt"]
            \ar[d,phantom,"\Downarrow{\scriptstyle\phi}"description] & b \\
            & a\ot a\ar[ur,"f\ot f"]\ar[r,"\mlt"'] & b\otimes b\ar[ur,"f"']
        \end{tikzcd}
    }
    \end{equation}
    
    \begin{displaymath}
    \adjustbox{scale=.85,center}{
        \begin{tikzcd}[column sep=.25in]
            {\phantom{abcd}}a\ar[dr,"1\otimes\uni"']\ar[rr,"f\otimes\uni"{name=doma}]\ar[rrd,bend right=60,"1_a"',"\lambda\cong"]
            \ar[rrr,bend left=30,"f"]\ar[rrr,phantom,bend left=15,"{\scriptstyle 1_f\ot\lambda\cong}"description] && b\otimes b\ar[r,"\mlt"'] & b
            \arrow[Rightarrow, from=doma, to=coda,"1_f\otimes\phi_0\;\;"',shorten <=.5em, shorten >=.5em]\\
            & |[alias=coda]|a\otimes a\ar[r,"\mlt"']\ar[ur,"f\otimes f"description]\ar[urr,phantom,"\Downarrow{\scriptstyle \phi}"description] &
            a\ar[ur,"f"'] &
        \end{tikzcd}
        =
        \begin{tikzcd}[column sep=.25in]
            \hole \\
            a\ar[rr,bend left,"f"]\ar[rr,bend right,"f"']\ar[rr,phantom,"\Downarrow{\scriptstyle1_f}"description] && b \\
            \hole
        \end{tikzcd}
        =
        \begin{tikzcd}[column sep=.25in]
            {\phantom{abcd}}a\ar[dr,"\uni\ot1"']\ar[rr,"\uni\otimes1"{name=doma}]\ar[rrd,bend right=60,"1_a"',"\rho\cong"]
            \ar[rrr,bend left=30,"f"]\ar[rrr,phantom,bend left=15,"{\scriptstyle \rho\ot1_f\cong}"description] && b\otimes b\ar[r,"\mlt"'] & b
            \arrow[Rightarrow, from=doma, to=coda,"\phi_0\otimes1_f\;\;"',shorten <=.5em, shorten >=.5em]\\
            & |[alias=coda]|a\otimes a\ar[r,"\mlt"']\ar[ur,"f\otimes f"description]\ar[urr,phantom,"\Downarrow{\scriptstyle \phi}"description] &
        a\ar[ur,"f"'] &
        \end{tikzcd}
        }
    \end{displaymath}

    If $(f,\phi,\phi_0)$ and $(g,\psi,\psi_0)$ are two lax morphisms between pseudomonoids $a$ and $b$, a \define{2-cell} between them $\sigma \colon f \Rightarrow g$ in $\K$ which is compatible with multiplications and units, in the sense that
    \begin{equation}\label{monoidal2cell}
        \begin{tikzcd}[row sep=.15in]
            & b\ot b\ar[dr,bend left=20,"\mlt"] & \\
            a\ot a\ar[ur,bend left,"f\ot f"]\ar[ur,bend right,"g\ot g"']\ar[dr,bend right=20,"\mlt"']
            \ar[ur,phantom,"\Downarrow{\scriptstyle\sigma\ot\sigma}"description]
            \ar[rr,phantom,"{\scriptstyle\psi}\Downarrow"{description,near end}] && b \\
            & a\ar[ur,bend right=20,"g"'] &
        \end{tikzcd}
        \quad=\quad
        \begin{tikzcd}[row sep=.15in]
            & b\ot b\ar[rd,bend left=20,"\mlt"] & \\
            a\ot a\ar[ur,bend left=20,"f\ot f"]\ar[dr,bend right=20,"\mlt"']
            \ar[rr,phantom,"{\scriptstyle\phi}\Downarrow"{description,near start}] && b \\
            & a \ar[ur,phantom,"\Downarrow{\scriptstyle\sigma}"description]
            \ar[ur,bend left,"f"]\ar[ur,bend right,"g"'] \\
        \end{tikzcd}
    \end{equation}
    \begin{displaymath}
        \begin{tikzcd}[row sep=.15in,column sep=.6in]
            I\ar[rr,bend left,"\uni"]\ar[dr,bend right=20,"\uni"']\ar[rr,phantom,"{\scriptstyle\phi_0}\Downarrow"{description,near start}] && b \\
            & a\ar[ur,bend left,"f"]\ar[ur,bend right,"g"']\ar[ur,phantom,"\Downarrow{\scriptstyle \sigma}"description] &
        \end{tikzcd}
        \quad=\quad
        \begin{tikzcd}[row sep=.15in,column sep=.6in]
            I\ar[dr,bend right=20,"\uni"']\ar[rr,bend left,"\uni"]\ar[rr,phantom,"\Downarrow{\scriptstyle\psi_0}"description] && b \\
            & a\ar[ur,bend right=20,"g"'] &
        \end{tikzcd}
    \end{displaymath} 

We obtain a 2-category $\PsMon_\lax(\K)$ for any monoidal 2-category $\K$, which is sometimes
denoted by $\Mon(\K)$ \cite{Hopfmonoidalcomonads}. By changing the direction of the 2-cells in \cref{eq:laxmorphism} and the rest of the axioms appropriately, or asking for them to be invertible, we have 2-categories $\PsMon_\opl(\K)$ and $\PsMon(\K)$ of pseudomonoids with \define{oplax} or \define{(strong) morphisms} between them.

\begin{examples}
    The prototypical example is that of the (2-)monoidal 2-category $\K = (\Cat, \times, \1)$ of categories, functors, and natural transformations with the cartesian product of categories and the unit category with a unique object and arrow. A pseudomonoid in $(\Cat,\times,\1)$ is a monoidal category, a lax (resp. oplax, strong) morphism between two of these is precisely a lax (resp. oplax, strong) monoidal functor, and a 2-cell is a monoidal natural transformation. Therefore we obtain the well-known 2-categories $\MonCat_\lax$, $\MonCat_\opl$ and $\MonCat$.
\end{examples}

\begin{rmk}\label{rem:pseudomonas2functors}
    There is an evident similarity between the structures defined above, e.g \cref{eq:weakmonpseudo} and \cref{eq:laxmorphism}, or \cref{eq:monpseudonat} and \cref{monoidal2cell}. This is due to the fact that monoidal 2-categories, lax monoidal pseudofunctors and monoidal pseudonatural transformations are themselves appropriate pseudomonoid-related notions in a higher level; we do not get into such details, as they are not pertinent to the present work.

For our purposes, we are interested in a different observation, see \cite{Monoidalbicatshopfalgebroids}: any pseudomonoid $a$ in a monoidal 2-category $\K$ can in fact be expressed as a (weakly) lax monoidal 2-functor $A \colon \1 \to \K$ with $A(*) = a$. Moreover, a monoidal 2-natural transformation $\tau\colon A\Rightarrow B\colon\1\to\K$ bijectively corresponds to a \emph{strong morphism} between the pseudomonoids $a$ and $b$, and similarly for monoidal modifications and 2-cells. Therefore the 2-category of pseudomonoids $\PsMon(\K)$ can be equivalently viewed as $\MonTCat(\1,\K)$, the 2-category of weakly lax monoidal 2-functors $\1\to\K$, weakly monoidal 2-natural transformations and monoidal modifications.
\end{rmk}

In what follows, we want to make precise the natural idea that if two monoidal 2-categories are equivalent, then their 2-categories of pseudomonoids are also equivalent.
As was already shown in~\cite[Prop.~5]{Monoidalbicatshopfalgebroids}, any lax monoidal 2-functor $\ps{F}\colon\K\to\L$ takes pseudomonoids to pseudomonoids, and in fact
\cite{Mccruddencoalgebroids} there is a functor $\PsMon(\ps{F})$ that commutes with the respective forgetful functors
\begin{displaymath}
    \begin{tikzcd}[column sep=.6in]
        \PsMon(\K)\ar[r,"\PsMon(\ps{F})"]\ar[d] & \PsMon(\L)\ar[d] \\
        \K\ar[r,"\ps{F}"'] & \L.
    \end{tikzcd}
\end{displaymath}
Based on the above \cref{rem:pseudomonas2functors}, we can define a hom-2-functor that clarifies these assignments. Its domain $\mathsf{Mon2Cat}$ is the 2-category of monoidal 2-categories, (weakly) lax monoidal 2-functors and (weakly) monoidal 2-natural transformations, which lives inside the more general tricategory $\mathsf{MonBicat}$ established in \cite{PeriodicII}. Although it may look surprising at first, $\mathsf{Mon2Cat}$ can be explicitly verified to be a 2-category rather than a more relaxed structure, using the ambient axioms and strictness of the underlying structures. 

\begin{prop}\label{prop:PsMon2functor}
There is a 2-functor
\begin{equation}\label{eq:PsMon}
\PsMon(-)\simeq\MonTCat(\1,-)\colon\MonTCat\to\TCat
\end{equation}
which maps a monoidal 2-category to its 2-category of pseudomonoids, strong morphisms and 2-cells between them.
\end{prop}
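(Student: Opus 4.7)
The plan is to realize $\PsMon(-)$ as the representable 2-functor $\MonTCat(\1,-)$ on the 2-category $\MonTCat$, exploiting the identification supplied by \cref{rem:pseudomonas2functors}. So the strategy is: (i) establish that $\MonTCat$ really is a 2-category, (ii) invoke the standard representable construction, and (iii) transport everything across the equivalence $\MonTCat(\1,\K)\simeq\PsMon(\K)$.

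First I would verify the 2-categorical (rather than merely tricategorical) structure of $\MonTCat$. The underlying pseudofunctors of weakly lax monoidal 2-functors are in fact 2-functors, and the structure pseudonatural transformations $\mu$ and $\mu_0$ of \cref{eq:weakmonpseudo} are 2-natural; only the coherence modifications $\omega,\zeta,\xi$ of \cref{eq:omega} are non-identity isomorphisms. Composition of two such 1-cells produces $\mu$'s that are again 2-natural (by whiskering 2-natural transformations with 2-functors) and new coherence modifications assembled from those of the constituents in a strictly associative way. An analogous check for vertical and horizontal composition of weakly monoidal 2-natural transformations, together with the interchange law, shows that associativity and unitality hold on the nose, so $\MonTCat$ is indeed a 2-category.

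Next I would define, for any 2-category $\mathcal C$ and object $d\in\mathcal C$, the representable 2-functor $\mathcal C(d,-)\colon\mathcal C\to\TCat$ in the usual way: on an object $x$ it returns the hom-2-category $\mathcal C(d,x)$; on a 1-cell $f\colon x\to y$ it returns post-composition $f\circ-$; on a 2-cell $\alpha\colon f\Rightarrow g$ it returns the whiskered 2-natural transformation $\alpha\ast-$. The 2-functor axioms reduce to associativity of composition and the middle-four interchange law in $\mathcal C$. Specializing $\mathcal C=\MonTCat$ and $d=\1$ yields a 2-functor $\MonTCat(\1,-)\colon\MonTCat\to\TCat$.

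Finally I would use \cref{rem:pseudomonas2functors} to identify, for each monoidal 2-category $\K$, the hom-2-category $\MonTCat(\1,\K)$ with $\PsMon(\K)$: lax monoidal 2-functors $\1\to\K$ correspond to pseudomonoids in $\K$, monoidal 2-natural transformations to strong morphisms of pseudomonoids, and monoidal modifications to the 2-cells of \cref{monoidal2cell}. These bijections are 2-natural in $\K$ by construction, so post-composing with a lax monoidal 2-functor $\F\colon\K\to\L$ recovers the assignment $\PsMon(\F)$ already described; this also gives commutation with the forgetful 2-functors to $\TCat$ from the displayed square in the text. The main (and essentially only) obstacle is the bookkeeping in step (i): keeping the coherence modifications $\omega,\zeta,\xi$ and the isomorphisms $u,u_0$ straight while checking that their composition in $\MonTCat$ is strictly associative and unital. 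Once that is established, steps (ii) and (iii) are formal.
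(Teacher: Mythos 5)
Your proposal takes essentially the same route as the paper, which likewise obtains $\PsMon(-)$ as the representable 2-functor $\MonTCat(\1,-)$ by combining \cref{rem:pseudomonas2functors} with the (asserted, not detailed) verification that $\MonTCat$ is a genuine 2-category rather than a weaker tricategorical structure. One small correction to your step (i): for weakly lax monoidal 2-functors the structure maps $\mu,\mu_0$ of \cref{eq:weakmonpseudo} are in general only pseudonatural (the paper reserves 2-naturality for the stricter notion), but this does not affect the strict associativity and unitality of composition in $\MonTCat$, so your argument goes through.
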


The theory in \cite{Monoidalbicatshopfalgebroids,Mccruddencoalgebroids} extends the above definitions to the case of \emph{braided} and \emph{symmetric} pseudomonoids in \emph{braided} and \emph{symmetric monoidal 2-categories}. Briefly recall that a \define{braiding} for $(\K,\ot,I)$ is a pseudonatural equivalence with components $\braid_{a,b}\colon a\ot b\to b\ot a$ and invertible modifications satisfying axioms, whereas a \define{syllepsis} is an invertible modification 
\[a\ot b\xrightarrow{1}a\ot b\Rrightarrow a\ot b\xrightarrow{\braid_{a,b}}b\ot a\xrightarrow{\braid_{b,a}}a\ot b\] satisfying axioms,
which is called \define{symmetry} if one extra axiom holds. With the appropriate notions of \emph{braided} and \emph{symmetric} lax monoidal pseudofunctors and monoidal pseudonatural transformations (and usual monoidal modifications), we have tricategories $\BrMonTCat_{\pse}$ and $\SymMonTCat_{\pse}$. 
For example, a braided lax monoidal pseudofunctor comes equipped an invertible modification with components
\begin{equation}\label{eq:brweakmonpseudo}
\begin{tikzcd}
\F a\otimes \F b\ar[r,"\mu_{a,b}"]\ar[d,"\braid_{\F a,\F b}"']\ar[dr,phantom,"\Downarrow{\scriptstyle v_{a,b}}"description] & \F(a\ot b)\ar[d,"\F(\braid_{a,b})"] \\
\F b\otimes\F a\ar[r,"\mu_{b,a}"'] & \F(b\ot a)
\end{tikzcd}
\end{equation}
As earlier, there exist 2-categories of braided and symmetric pseudomonoids with strong morphisms $\BrPsMon(\K)=\BrMonTCat(\1,\K)$ and $\SymPsMon(\K)=\SymMonTCat(\1,\K)$.

\begin{prop}\label{prop:BrPsMon}
    There are 2-functors
    \begin{displaymath}
    \BrPsMon\colon\BrMonTCat\to \TCat,\quad
    \SymPsMon\colon\SymMonTCat\to\TCat 
    \end{displaymath}
    which map a braided or symmetric monoidal 2-category to its 2-category of braided or symmetric pseudomonoids.
\end{prop}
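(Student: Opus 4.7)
The plan is to follow the blueprint of \cref{prop:PsMon2functor} verbatim, leveraging the identifications $\BrPsMon(\K)\simeq\BrMonTCat(\1,\K)$ and $\SymPsMon(\K)\simeq\SymMonTCat(\1,\K)$ stated just before the proposition. Under these identifications the claimed 2-functors become nothing but the representable hom 2-functors $\BrMonTCat(\1,-)$ and $\SymMonTCat(\1,-)$, so once the ambient data are shown to be 2-categorical the result is a formal consequence.

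The first step I would carry out is verifying that $\BrMonTCat$ and $\SymMonTCat$, viewed as the sub-tricategories of $\BrMonTCat_{\pse}$ and $\SymMonTCat_{\pse}$ on weakly lax monoidal 2-functors (not mere pseudofunctors) and weakly braided or symmetric monoidal 2-natural transformations, are genuine 2-categories rather than honest tricategories. This parallels the discussion preceding \cref{prop:PsMon2functor}: the strictness of the underlying 2-functors and the 2-naturality of the underlying transformations collapses all higher coherence data at the composition level, while the additional modifications demanded by the braided or symmetric setting (e.g.\ the $v_{a,b}$ of \cref{eq:brweakmonpseudo}) impose equations on the given data rather than introducing new dimensions. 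Once this is established, post-composition with a fixed 1- or 2-cell yields the hom 2-functor machinery needed, and 2-functoriality of $\BrMonTCat(\1,-)$ and $\SymMonTCat(\1,-)$ into $\TCat$ is immediate.

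The second step is to unwind what these hom 2-functors do object-wise, so as to see that they indeed deliver braided (resp.\ symmetric) pseudomonoids, strong morphisms and 2-cells between them. A braided lax monoidal 2-functor $\F\colon\K\to\L$ sends a braided pseudomonoid $a$ to $\F a$, which inherits its multiplication and unit from \cref{prop:PsMon2functor} and acquires a braiding by pasting $\F$ applied to the braiding of $a$ with the modification $v_{a,a}$; the hexagon axioms for this induced braiding follow from the coherence between $v$ and $\mu,\omega,\zeta,\xi$, and in the symmetric case the symmetry axiom follows from the symmetry axiom imposed on $v$. I expect the main obstacle to be purely the pasting-diagram bookkeeping in these coherence verifications, rather than any conceptual subtlety: by the representability reformulation and \cref{rem:pseudomonas2functors}, everything takes place internally to $\BrMonTCat$ and $\SymMonTCat$, so the argument is structurally the same as for $\PsMon$ in \cref{prop:PsMon2functor} and requires no new idea beyond bootstrapping via $\1$.
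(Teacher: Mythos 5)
Your proposal is correct and follows essentially the same route as the paper: the paper justifies this proposition exactly as it does \cref{prop:PsMon2functor}, namely by the identifications $\BrPsMon(\K)=\BrMonTCat(\1,\K)$ and $\SymPsMon(\K)=\SymMonTCat(\1,\K)$ stated immediately beforehand, realizing the claimed 2-functors as the representable hom 2-functors out of $\1$ (with the pseudomonoid-preservation facts deferred to the cited literature). Your additional remarks on verifying that $\BrMonTCat$ and $\SymMonTCat$ are genuine 2-categories, and on unwinding the induced braiding via $v_{a,a}$, are consistent elaborations of what the paper leaves implicit.
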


Finally, there is a notion of a monoidal 2-equivalence arising as the equivalence internal to the 2-category $\MonTCat$.

\begin{defi}
    A \define{monoidal 2-equivalence} is a 2-equivalence $\ps{F}\colon\K\simeq\L:\ps{G}$ where both 2-functors
    are lax monoidal, and the 2-natural isomorphisms $1_\K \cong \ps{F} \ps{G}$, $\ps{G} \ps{F} \cong 1_\L$ are monoidal. Similarly for \define{braided} and \define{symmetric} monoidal 2-equivalences.
\end{defi}

As is the case for any 2-functor between 2-categories, $\PsMon$ as well as $\BrPsMon$ and $\SymPsMon$ map equivalences
to equivalences.

\begin{prop}\label{prop:2equivpseudomon}
    Any monoidal 2-equivalence $\K\simeq\L$ induces a 2-equivalence between the respective 2-categories
    of pseudomonoids $\PsMon(\K)\simeq\PsMon(\L)$. Similarly, any braided or symmetric monoidal 2-equivalence induces a 2-equivalence $\BrPsMon(\K)\simeq\BrPsMon(\L)$ or $\SymPsMon(\K)\simeq\SymPsMon(\L)$.
\end{prop}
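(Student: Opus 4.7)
The plan is to recognize this statement as an immediate consequence of 2-functoriality. By definition, a monoidal 2-equivalence is precisely an equivalence internal to the 2-category $\MonTCat$, and it is a standard fact that any 2-functor between 2-categories sends internal equivalences to internal equivalences (the characterization of an equivalence as a pair of 1-cells together with a pair of invertible 2-cells is preserved strictly by any 2-functor).

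More explicitly, given lax monoidal 2-functors $\ps{F}\colon\K\to\L$ and $\ps{G}\colon\L\to\K$ together with monoidal 2-natural isomorphisms $1_\K\cong\ps{G}\ps{F}$ and $\ps{F}\ps{G}\cong 1_\L$ witnessing a monoidal 2-equivalence, I would simply apply the 2-functor $\PsMon\colon\MonTCat\to\TCat$ from \cref{prop:PsMon2functor}. Functoriality produces the 2-functors $\PsMon(\ps{F})$ and $\PsMon(\ps{G})$, and sends the two invertible monoidal 2-natural transformations to 2-natural isomorphisms $1_{\PsMon(\K)}\cong\PsMon(\ps{G})\circ\PsMon(\ps{F})$ and $\PsMon(\ps{F})\circ\PsMon(\ps{G})\cong 1_{\PsMon(\L)}$, yielding a 2-equivalence $\PsMon(\K)\simeq\PsMon(\L)$ in $\TCat$. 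The braided and symmetric statements follow identically, by instead applying the 2-functors $\BrPsMon\colon\BrMonTCat\to\TCat$ and $\SymPsMon\colon\SymMonTCat\to\TCat$ from \cref{prop:BrPsMon}.

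There is no real obstacle: the substantive content has already been packaged into \cref{prop:PsMon2functor,prop:BrPsMon}, which establish that $\PsMon$, $\BrPsMon$, and $\SymPsMon$ are genuine 2-functors out of the respective (braided/symmetric) monoidal 2-categories. The only point meriting a line of justification in a written proof is the general principle that 2-functors preserve internal equivalences, which is immediate from the equational characterization of equivalences.
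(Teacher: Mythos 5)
Your proposal is correct and is exactly the paper's argument: the paper likewise derives the statement by noting that the 2-functors $\PsMon$, $\BrPsMon$, $\SymPsMon$ of \cref{prop:PsMon2functor,prop:BrPsMon}, like any 2-functors, carry internal equivalences to internal equivalences. Nothing further is needed.
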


It would be reasonable to have a corresponding statement for biequivalent Gray monoids and their pseudomonoids, by appropriately adjusting \cref{prop:PsMon2functor}. Due to our case of interest  -- fibrations and indexed categories -- being equivalent in this stronger sense, we do not need to work in that level of generality.

\section{The Monoidal Grothendieck Construction} \label{sec:mongroth}

In this section, we give explicit descriptions of the 2-categories of pseudomonoids in the cartesian monoidal 2-categories of fibrations and indexed categories, $\Fib$ and $\ICat$, and we exhibit their equivalence induced by the \emph{monoidal} Grothendieck construction. We also consider the fixed-base case, namely pseudomononoids in $\Fib(\X)$ and $\ICat(\X)$ and their corresponding equivalence. These two cases are in general distinct, and can be summarized in
\begin{equation}\label{clarifyingdiag}
\begin{tikzcd}[column sep=.01in,row sep=.3in]
    &
    \Fib{\simeq}\ICat
    \arrow[dl,"\PsMon(-)"']
    \arrow[dr,"\textrm{fix }\X"]
    \\
    \MonFib{\simeq}\MonICat
    \arrow[d,"\textrm{fix }\X"']
    &&
    \Fib(\X){\simeq}\TCat_\pse(\X^\op,\Cat)
    \arrow[d,"\PsMon(-)"]
    \\
    \MonFib(\X){\simeq}\MonTCat_\pse(\X^\op,\Cat) 
    &&
    \PsMon(\Fib(\X)){\simeq}\TCat_\pse(\X^\op, \MonCat)
\end{tikzcd}
\end{equation}
The two feet turn out to coincide only under certain hypotheses, which reveal some interesting subtleties on the potential monoidal structures on fibrations and pseudofunctors, discussed in detail in \cref{sec:fibrewisemonoidal}. 

\subsection{Monoidal Fibrations}\label{sec:monfib}

The 2-categories $\Fib$ and $\OpFib$ of (op)fibrations over arbitrary bases, recalled in \cref{sec:fibrations}, have a natural cartesian monoidal structure inherited from $\Cat^\2$. For two fibrations $P$ and $Q$, their (2-)product 
\begin{equation}\label{Fib_cart}
P \times Q \colon \A \times \B \to \X \times \Y
\end{equation}
is also an fibration, where a cartesian lifting is a pair of a $P$-lifting and a $Q$-lifting; similarly for opfibrations. The monoidal unit is the trivial (op)fibration $1_\1 \colon \1 \to \1$. Since the monoidal structure is cartesian, they are both symmetric monoidal 2-categories.

A pseudomonoid in $(\Fib,\times,1_\1)$ is called a \define{monoidal fibration}. A detailed argument of how the following proposition captures the required structure can be found in \cite{EnrichedFibration}, and also that was the original description of this notion in \cite{FramedBicats}. See also \cite{hermida2004fibrations} for an alternative approach via fibrations of multicategories.

\begin{prop}\label{def:monoidal_fibration}
    A monoidal fibration $\T \colon \U \to \V$ is a fibration for which both the total $\U$ and base category $\V$ are monoidal, $\T$ is a strict monoidal functor and the tensor product $\otimes_\U$ of $\U$ preserves cartesian liftings.
\end{prop}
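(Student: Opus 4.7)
The plan is to unpack what it means to be a pseudomonoid in the cartesian monoidal 2-category $(\Fib,\times,1_\1)$ and match each component of the structure to the data listed in the statement. Throughout I will use that a 1-cell in $\Fib$ is a strictly commuting square \cref{commutativefibredcell} whose upper functor preserves cartesian liftings, and that a 2-cell is a pair $(\beta,\alpha)$ with $\beta$ above $\alpha$ as in \cref{eq:fibred2cell}.

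First I would decode the multiplication 1-cell $\mlt\colon\T\times\T\to\T$. Since the tensor product on $\Fib$ is given componentwise by \cref{Fib_cart}, this amounts to a strictly commuting square whose top functor $\otimes_\U\colon\U\times\U\to\U$ preserves cartesian liftings and whose bottom functor is $\otimes_\V\colon\V\times\V\to\V$; strict commutativity says $\T(a\otimes_\U b)=\T a\otimes_\V\T b$ on both objects and morphisms. In the same way, the unit 1-cell $\uni\colon 1_\1\to\T$ singles out objects $I_\U\in\U$ and $I_\V\in\V$ with $\T(I_\U)=I_\V$. So far $\T$ already strictly preserves the (yet-to-be-coherent) tensor and unit, and $\otimes_\U$ preserves cartesian liftings.

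Next, I would translate the associator and unitor 2-cells \cref{alphalambdarho}. An invertible fibred 2-cell between two parallel composites of $\mlt$'s is, by definition of 2-cells in $\Fib$, a pair of invertible natural transformations lying one above the other: one gives associativity/unitality for $(\U,\otimes_\U,I_\U)$, the other for $(\V,\otimes_\V,I_\V)$, and the compatibility $\T\alpha^\U=\alpha^\V_{\T-,\T-,\T-}$ (and similarly for the unitors) is automatic. The pentagon and triangle coherences, being equalities of fibred 2-cells, project to the standard pentagon and triangle axioms simultaneously in $\U$ and in $\V$; hence both become honest monoidal categories and $\T$ is a strict monoidal functor on the nose.

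Conversely, from the data listed in the statement one repackages the monoidal structures on $\U$ and $\V$ together with the strict preservation condition as a multiplication 1-cell and unit 1-cell in $\Fib$, and the coherence isomorphisms as fibred 2-cells, recovering the pseudomonoid. No real obstacle is anticipated; the proof is essentially a bookkeeping translation between the pseudomonoid axioms of \cref{alphalambdarho} and the monoidal category axioms on $\U$ and $\V$. The one point that merits care is strictness: because 1-cells of $\Fib$ are squares that commute \emph{on the nose} rather than up to iso, $\T$ is forced to be \emph{strict} monoidal; had one instead taken a weaker notion of fibred 1-cell (squares commuting up to coherent iso), the same argument would yield only a strong monoidal $\T$.
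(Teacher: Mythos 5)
Your unpacking is correct and matches the paper's treatment: the paper likewise reads off $\otimes_\U$, $\otimes_\V$, $I_\U$, $I_\V$ from the multiplication and unit fibred 1-cells \cref{multunitmonoidalfibr}, obtains the monoidal coherence data on $\U$ and $\V$ from the fibred 2-cells \cref{alphalambdarho} with the "lying above" condition forcing strictness of $\T$, and notes that cartesianness of $\otimes_\U$ is part of being a 1-cell in $\Fib$ (the paper defers the remaining bookkeeping to the cited references). Your closing observation about where strictness comes from is accurate and consistent with the paper's framework.
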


Explicitly, the multiplication and unit are fibred 1-cells $\mlt = (\otimes_\U, \otimes_\V) \colon \T \times \T \to \T$ and $\uni = (I_\U, I_\V) \colon \1 \to \T$
displayed as
\begin{equation} \label{multunitmonoidalfibr}
\xymatrix @C=.6in{
    \U \times \U \ar[r]^-{\otimes_\U}
    \ar[d]_-{\T \times \T} 
    & 
    \U
    \ar[d]^-\T \\
    \V \times \V 
    \ar[r]_-{\otimes_{\V}} 
    & 
    \V
} 
\qquad \mathrm{and} \qquad
\xymatrix @C=.6in 
{\1 \ar[r]^-{I_\U} 
    \ar[d]_-{1_\1} 
    &  \U \ar[d]^-{\T}     \\
    \1 \ar[r]_-{I_{\V}} 
    &  \V
}
\end{equation}
and the fact that $\ot_\A$ is cartesian is expressed by $f^*x\ot_\A g^*y\cong (f\ot_\X g)^*(x\ot_\A y)$.

A \define{monoidal fibred 1-cell} between two monoidal fibrations $\T\colon\U\to\V$ and $\ES\colon\W\to\Z$ is a (strong) morphism of pseudomonoids between them, as defined in \cref{sec:Monoidal2cats}. It amounts to a fibred 1-cell, i.e.\ a commutative
\begin{equation}\label{eq:HF}
\begin{tikzcd}
    \U    \ar[r,"H"]    \ar[d,"\T"'] 
    & \W    \ar[d,"\ES"]    \\
    \V  \ar[r,"F"'] 
    &  \Z  
\end{tikzcd}
\end{equation}
where $H$ preserves cartesian liftings,
along with invertible 2-cells \cref{eq:laxmorphism} in $\Fib$ satisfying axioms \cref{eq:axiomslaxmorphism}. By \cref{eq:fibred2cell}, these are fibred 2-cells
\begin{displaymath}
    \begin{tikzcd}[row sep=.05in]
        & 
        \W\times\W
        \ar[dr,bend left=1ex,"\otimes_W"] 
        \\
        \U \times \U
        \ar[dddd, "\T\times\T"'] 
        \ar[dr,bend right = 1ex, "\otimes_\U"']
        \ar[ur, bend left = 1ex, "H \times H"]
        \ar[rr, phantom, "\Downarrow{\scriptstyle\phi}"] 
        && 
        \W \ar[dddd,"\ES"] 
        \\& \U
        \ar[ur,bend right = 1ex, "H"']
        & \\ \hole \\ 
        & 
        \Z\times\Z \ar[dr,bend left=1ex,"\otimes_\Z"] & \\
        \V \times \V 
        \ar[ur,bend left = 1ex, "F \times F"]
        \ar[dr, bend right = 1ex, "\otimes_\V"']
        \ar[rr, phantom, "\Downarrow{\scriptstyle\psi}"] 
        &&  \Z  \\& \V
        \ar[ur, bend right = 1ex, "F"'] &
    \end{tikzcd}
    \qquad\qquad
\begin{tikzcd}[row sep=.08in]
& & & \\
\1 \ar[dddd, equal] 
\ar[dr, bend right = 1ex, "I_\U"']
\ar[rr, bend left, "I_\W"]
\ar[rr,phantom,"\Downarrow{\scriptstyle\phi_0}"] && \W \ar[dddd,"\ES"] \\ & \U \ar[ur, bend right = 1ex, "H"']  &\\    \hole \\ &&& \\
\1 \ar[dr, bend right = 1ex, "I_\V"']
\ar[rr, bend left, "I_\Z"]
\ar[rr, phantom, "\Downarrow{\scriptstyle\psi_0}"] 
&& \Z \\
&\V \ar[ur, bend right = 1ex, "F"'] &
\end{tikzcd} 
\end{displaymath}
where  $\phi$ and $\psi$ are natural isomorphisms with components
\begin{displaymath}
    \phi_{a,b} \colon Ha \ot Hb \xrightarrow{\sim} H(a \ot b), \quad \psi_{x,y} \colon  Fx \ot Fy \xrightarrow{\sim} F(x \ot y)
\end{displaymath}
such that $\phi$ is above $\psi$, i.e.\ the following diagram commutes:
\begin{displaymath}
\begin{tikzcd}[column sep=.6in]
 \ES(Ha \ot Hb) \ar[r, "{\ES\phi_{a, b}}"]
 \ar[d, equal, "{\cref{multunitmonoidalfibr}}"'] 
 & \ES H(a \ot b) \ar[d, equal, "{\cref{eq:HF}}"] \\
 \ES H a \ot \ES Hb \ar[d, equal,"{\cref{eq:HF}}"' ] 
& F\T(a \ot b) \ar[d, equal,"{\cref{multunitmonoidalfibr}}"] \\
F\T a \ot F\T b \ar[r, "{\psi_{\T a, \T b}}"']
& F(\T a \ot \T b)
\end{tikzcd}
\end{displaymath}
Similarly, $\phi_0$ and $\psi_0$ have single components $\phi_0 \colon I_\W \xrightarrow{\sim} H(I_\U)$ and $ \psi_0 \colon I_\Z \xrightarrow{\sim} F(I_\V)$ such that $\ES(\phi_0) = \psi_0$.
These two conditions in fact say that the identity transformation, a.k.a. commutative square \cref{eq:HF} is a monoidal one, as expressed in \cite[12.5]{FramedBicats}.
The relevant axioms dictate that $(\phi,\phi_0)$ and $(\psi,\psi_0)$ give $H$ and $F$ the structure of strong monoidal functors, thus we obtain the following characterization.

\begin{prop}\label{prop:monoidalfibred1cell}
A monoidal fibred 1-cell between two monoidal fibrations $\T$ and $\ES$ is a fibred 1-cell $(H,F)$ where both functors are monoidal, $(H,\phi,\phi_0)$ and $(F,\psi,\psi_0)$, such that $\ES(\phi_{a,b})=\psi_{\T a,\T b}$ and $\ES\phi_0=\psi_0$.
\end{prop}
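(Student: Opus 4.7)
The plan is to simply unpack the definition of a strong morphism of pseudomonoids in the monoidal 2-category $(\Fib,\times,1_\1)$ from \cref{sec:Monoidal2cats}, specialized to the pseudomonoids $\T$ and $\ES$ arising from \cref{def:monoidal_fibration}. The text preceding the statement has already written down the data: a fibred 1-cell $(H,F)$ together with invertible fibred 2-cells $(\phi,\psi)$ and $(\phi_0,\psi_0)$ against the squares in \cref{multunitmonoidalfibr}. What remains is to identify the content of the coherence axioms \cref{eq:axiomslaxmorphism} and of the ``fibred'' condition.

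First I would observe that, by the general form of a fibred 2-cell \cref{eq:fibred2cell}, the datum $(\phi,\psi)$ amounts to a pair of natural transformations whose components $\phi_{a,b}\colon Ha\otimes Hb \to H(a\otimes b)$ in $\W$ and $\psi_{x,y}\colon Fx\otimes Fy\to F(x\otimes y)$ in $\Z$ satisfy $\ES(\phi_{a,b}) = \psi_{\T a,\T b}$, because $\phi$ must lie above $\psi$; since both squares of \cref{multunitmonoidalfibr} commute strictly, this equality is exactly the one displayed in the excerpt. The same analysis applied to $(\phi_0,\psi_0)$ gives single components $\phi_0\colon I_\W\to H(I_\U)$ and $\psi_0\colon I_\Z\to F(I_\V)$ with $\ES\phi_0=\psi_0$. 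Invertibility is preserved at each level, so we obtain isomorphisms as required for a strong morphism.

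Next I would translate the three coherence diagrams of \cref{eq:axiomslaxmorphism} --- the pentagon-type axiom and the two triangle-type axioms --- via the same decomposition. Because the 2-category $\Fib$ embeds into $\Cat^\2$ and the tensor products of $\U,\V,\W,\Z$ are genuine functors, each axiom is a commutative diagram of natural transformations in the total categories that projects, under $\T$ and $\ES$, to the corresponding diagram in the bases. The top layer $(\phi,\phi_0)$ satisfying these diagrams is by definition the statement that $(H,\phi,\phi_0)\colon(\U,\otimes_\U,I_\U)\to(\W,\otimes_\W,I_\W)$ is a strong monoidal functor, and the bottom layer that $(F,\psi,\psi_0)\colon(\V,\otimes_\V,I_\V)\to(\Z,\otimes_\Z,I_\Z)$ is a strong monoidal functor. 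Conversely, given two strong monoidal functors compatible under $\T$ and $\ES$, one reassembles them into a strong pseudomonoid morphism.

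The routine but only mildly subtle point will be checking that the ``fibredness'' of the 2-cells $(\phi,\psi)$ and $(\phi_0,\psi_0)$ (i.e.\ $\ES\phi=\psi\cdot\T$ componentwise) is automatic from the structure and does not need to be imposed as an extra axiom beyond the compatibility displayed in the excerpt; this is what is captured in Shulman's remark \cite[12.5]{FramedBicats} that such an identity square is a monoidal square. Once this is verified, the bijection between strong pseudomonoid morphisms and pairs of compatible monoidal functors is immediate, establishing the proposition.
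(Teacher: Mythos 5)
Your proposal is correct and follows essentially the same route as the paper: the paper likewise treats the proposition as a direct unpacking of a strong morphism of pseudomonoids in $(\Fib,\times,1_\1)$, reading the fibred-2-cell condition ``$\phi$ above $\psi$'' as the displayed compatibilities $\ES(\phi_{a,b})=\psi_{\T a,\T b}$, $\ES\phi_0=\psi_0$, and the coherence axioms \cref{eq:axiomslaxmorphism} as the strong monoidal functor axioms for $H$ and $F$ separately. The only cosmetic difference is that you flag the fibredness of the 2-cells as a point to verify, whereas the paper simply takes it as part of the data of a 2-cell in $\Fib$, which is exactly what it is.
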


For lax or oplax morphisms of pseudomonoids in $\Fib$, we obtain appropriate notions of monoidal fibred 1-cells, where the top and bottom functors of \cref{eq:HF} are lax or oplax monoidal respectively.

Finally, a \define{monoidal fibred 2-cell} is a 2-cell between morphisms $(H,F)$ and $(K,G)$ of pseudomonoids $\T$, $\ES$ in $\Fib$. Explicitly, it is a fibred 2-cell as described in \cref{sec:fibrations}
\begin{displaymath}
\begin{tikzcd}[row sep=.45in,column sep=.7in]
    \U
    \ar[d, "\T"']
    \ar[r, bend left = 20, "H"]
    \ar[r, bend right = 20, "K"']
    \ar[r, phantom, "\Downarrow {\scriptstyle\beta}" description] 
    & \W \ar[d, "\ES"] \\
    \V    \ar[r, bend left = 20, "F"]
    \ar[r, bend right = 20, "G"']
    \ar[r, phantom, "\Downarrow{\scriptstyle\alpha}" description] &  \Z
\end{tikzcd}
\end{displaymath}
satisfying the axioms \cref{monoidal2cell}; these come down to the fact that both $\beta$ and $\alpha$ are monoidal natural transformations
between the respective lax monoidal functors.

\begin{prop}\label{prop:monfib2cell}
A monoidal fibred 2-cell between two monoidal fibred 1-cells is an ordinary fibred 2-cell $(\alpha,\beta)$ where both natural transformations are monoidal.
\end{prop}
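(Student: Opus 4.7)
The plan is to unpack the general definition of a 2-cell between morphisms of pseudomonoids, namely the axioms displayed in \cref{monoidal2cell}, applied to the specific monoidal 2-category $\Fib$, and show that these axioms decompose precisely into the statement that both the top natural transformation $\beta$ and the bottom natural transformation $\alpha$ are monoidal with respect to the monoidal structures given in \cref{prop:monoidalfibred1cell}. The overall approach is entirely parallel to the derivation of \cref{prop:monoidalfibred1cell}; once one recalls how 2-cells and products are formed in $\Fib$, the monoidal axioms split componentwise essentially by inspection.

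First I will note that, by \cref{rem:Fibisfibred}, a 2-cell in $\Fib$ between fibred 1-cells $(H,F)$ and $(K,G)$ is by definition a fibred 2-cell $(\beta,\alpha)$ as in \cref{eq:fibred2cell}, with $\beta$ above $\alpha$. Next, I will recall that the cartesian product \cref{Fib_cart} of fibrations in $\Fib$ is computed componentwise on total and base categories, so that tensoring two fibred 2-cells produces the pair $(\beta\times\beta,\alpha\times\alpha)$; similarly the multiplication and unit 2-cells $\phi,\phi_0$ and $\psi,\psi_0$ of the pseudomonoid morphisms $(H,F)$ and $(K,G)$, described in \cref{prop:monoidalfibred1cell}, live in the top and bottom components independently.

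With these two observations, the compatibility axioms \cref{monoidal2cell} for $(\beta,\alpha)$ viewed as a 2-cell in $\Fib$ become a pair of equations: one involving only the $\U,\W$-data and asserting that
\[
\phi^K_{a,b}\circ(\beta_a\otimes\beta_b)=\beta_{a\otimes b}\circ\phi^H_{a,b},\qquad \phi^K_0=\beta_{I_\U}\circ\phi^H_0,
\]
and a second, formally identical one involving only $\V,\Z$-data for $\alpha$, $\psi^F$, $\psi^G$. These are precisely the defining equations for $\beta\colon H\Rightarrow K$ and $\alpha\colon F\Rightarrow G$ to be monoidal natural transformations between the monoidal functors produced in \cref{prop:monoidalfibred1cell}. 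Conversely, any pair of monoidal natural transformations satisfying these equations assembles into a 2-cell in $\PsMon(\Fib)$.

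The only step requiring any care is the verification that the decomposition is genuinely independent in the two components: this relies on the facts that products in $\Fib$ are strict and that $\phi$ sits strictly above $\psi$ (and $\phi_0$ above $\psi_0$) as part of the data of \cref{prop:monoidalfibred1cell}, so no mixing occurs. I expect this bookkeeping to be the only potential obstacle, but it follows routinely from $\Fib\hookrightarrow\Cat^{\mathbf 2}$ being a sub-2-category with the inherited cartesian structure. No further coherence data is involved because 2-cells between lax morphisms of pseudomonoids carry no structure of their own beyond the two axioms in \cref{monoidal2cell}.
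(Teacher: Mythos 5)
Your proposal is correct and follows the same route as the paper: the paper likewise defines a monoidal fibred 2-cell as a 2-cell between morphisms of pseudomonoids in the cartesian monoidal 2-category $\Fib$ and observes that the axioms \cref{monoidal2cell}, unpacked componentwise on total and base categories, amount exactly to $\beta$ and $\alpha$ being monoidal natural transformations. Your explicit equations and the remark that $\phi$ lying above $\psi$ keeps the two components independent are just the details the paper leaves implicit.
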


We denote by $\PsMon(\Fib)= \MonFib$ the 2-category of monoidal fibrations, monoid\-al fibred 1-cells and monoidal fibred 2-cells.
By changing the notion of morphisms between pseudomonoids to lax or oplax, we obtain 2-categories $\MonFib_\lax$ and $\MonFib_\opl$.
There are also 2-categories $\BrMonFib$ and $\SymMonFib$ of \define{braided} (resp. \define{symmetric}) \define{monoidal fibrations}, \define{braided} (resp. \define{symmetric}) \define{monoidal fibred 1-cells} and monoidal fibred 2-cells, defined to be $\BrPsMon(\Fib)$ and $\SymPsMon(\Fib)$ respectively; see \cref{prop:BrPsMon}.

Dually, we have appropriate 2-categories of \define{monoidal opfibrations}, \define{monoidal opfibred 1-cells} and \define{monoidal opfibred 2-cells} and their braided and symmetric variations, $\MonOpFib$, $\BrMonOpFib$ and $\SymMonOpFib$. All the structures are constructed dually, where a monoidal opfibration, namely
a pseudomonoid in the cartesian monoidal $(\OpFib,\times,1_\1)$,
is a strict monoidal functor such that the tensor product of the total category preserves cocartesian liftings.

All the above 2-categories have sub-2-categories 
of monoidal (op)fibrations over a fixed monoidal base $(\V,\otimes,I)$, e.g.\ $\MonFib(\V)$ and $\MonOpFib(\V)$. The morphisms are \define{monoid\-al (op)fibred functors}, i.e.\ fibred 1-cells of the form $(H,1_\V)$ with $H$ monoidal, and the 2-cells are \define{monoidal (op)fibred natural transformations}, i.e.\ fibred 2-cells of the form $(\beta,1_{1_\V})$ with $\beta$ monoidal. These 2-categories constitute the lower left foot of the diagram \cref{clarifyingdiag} on the side of fibrations, and correspond to the `global' monoidal part of the story.

Moreover, the above constructions can be adjusted accordingly to the context of split fibrations. Explicitly, the 2-category $\PsMon(\Fib_\spl)=\MonFib_\spl$ has as objects \define{monoidal split fibrations}, namely split fibrations $P\colon\A\to\X$ between monoidal categories which are strict monoidal functors and $\otimes_\A$ strictly preserves cartesian liftings (compare to \cref{def:monoidal_fibration}). Furthermore, the hom-categories $\MonFib_\spl(P,Q)$ between monoidal split fibrations are full subcategories of $\MonFib(P,Q)$ spanned by the monoidal fibred 1-cells which are split as fibred 1-cells, namely $(H,F)$ as in \cref{prop:monoidalfibred1cell} where $H$ strictly preserves cartesian liftings.

We end this section by considering a different monoidal object in the context of (op)fibra\-tions, starting over from the usual 2-categories of (op)fibrations over a fixed base $\X$, (op)fibred functor and (op)fibred natural transformations $\Fib(\X)$ and $\OpFib(\X)$. Notice that contrary to the earlier devopment, there is no monoidal structure on $\X$. Both these 2-categories are also cartesian monoidal, but in a different manner than $\Fib$ and $\OpFib$, due to the cartesian monoidal structure of $\Cat/\X$; see for example \cite[1.7.4]{Jacobs}. Explicitly, for fibrations $P\colon\A\to\X$ and $Q \colon \B \to \X$, their tensor product $P \boxtimes Q$ is given by any of the two equal functors to $\X$ from the following pullback
\begin{equation}\label{Fib_X_cart}
\begin{tikzcd}[column sep=.5in,row sep=.5in]   \A \times_\X \B
        \ar[r]
        \ar[d]
        \ar[dr, phantom, very near start, "\lrcorner"]\ar[dr,dashed,bend left,"P\boxtimes Q"description]
        & 
        \A
        \ar[d,"P"] 
        \\
        \B
        \ar[r,"Q"'] 
        & 
        \X
    \end{tikzcd}
\end{equation}
since fibrations are closed under pullbacks and composition. The monoidal unit is $1_\X \colon \X \to \X$.

A pseudomonoid in $(\Fib(\X),\boxtimes,1_\X)$ is an ordinary fibration $P\colon\A\to\X$ equip\-ped with two fibred functors $(\mlt,1_\X)\colon P\boxtimes P\to P$ and $(\uni,1_\X)\colon 1_\X\to P$ displayed as
\begin{equation}\label{eq:fibrewisetensor}
    \begin{tikzcd}
        \A \times_\X \A 
        \ar[dr,"P\boxtimes P"']
        \ar[rr,"\mlt"] 
        && 
        \A
        \ar[dl,"P"] 
        \\
        & 
        \X 
        &
    \end{tikzcd}\qquad
    \begin{tikzcd}
        \X
        \ar[rr,"\uni"]
        \ar[dr,"1_X"'] 
        && 
        \A
        \ar[dl,"P"] 
        \\
        & 
        \X 
        &
        \end{tikzcd}
    \end{equation}
along with invertible fibred 2-cells satisfying the usual axioms. 
In more detail, the pullback $\A \times_\X \A$  consists of pairs of objects of $\A$ which are in the same fibre of $P$, and $P\boxtimes P$ sends such a pair to their underlying object defining their fibre. 
The functor $\mlt$ maps any $(a,b)\in\A_x$ to some $m(a,b):=a\otimes_x b\in\A_x$ and the map $\uni$ sends an object $x \in \X$ to a chosen one, $I_x$, in its fibre. The invertible 2-cells and the axioms guarantee that these maps define a monoidal structure on each fibre $\A_x$, providing the associativity, left and right unitors. The fact that $\mlt$ and $\uni$ preserve cartesian liftings translate into a strong monoidal structure on the reindexing functors: for any $f\colon x\to y$ and $a,b\in\A_y$, $f^*a\otimes_x f^*b\cong f^*(a\otimes_y b)$ and $I_y\cong f^*(I_x)$.

A (lax) morphism between two such fibrations is a fibred functor \cref{eq:fibredfunctor} such that the induced functors $H_x \colon \A_x \to \B_x$ between the fibres as in \cref{eq:functorbetweenfibres} are (lax) monoidal, whereas a 2-cell between them is a fibred natural transformation $\beta\colon H\Rightarrow K$ \cref{eq:fibrednaturaltrans} which is monoidal when restricted to the fibers, $\beta_x|_{\A_x}\colon H_x\Rightarrow K_x$. In this way, we obtain the 2-category $\PsMon(\Fib(\X))$ and dually $\PsMon(\OpFib(\X))$. These 2-categories constitute the lower right foot of the diagram \cref{clarifyingdiag} on the side of fibrations, and correspond to the `fibrewise' monoidal part of the story.

Finally, taking pseudomonoids in the 2-category of split fibrations over a fixed base, we obtain the 2-category $\PsMon(\Fib_\spl(\X))$ with objects split fibrations equipped with a fibrewise tensor product and unit as above, but now the reindexing functors strictly preserve that monoidal structure since the top functors of \cref{eq:fibrewisetensor} strictly preserve cartesian liftings: $f^*a \ot_x f^*b = f^* (a \ot_y b)$ and $I_y = f^* (I_x)$. Moreover, $\PsMon (\Fib_s(\X)) (P, Q)$ is the full subcategory of $\PsMon(\Fib(\X))(P,Q)$ spanned by split fibred functors, namely $H\colon\A\to\B$ which strictly preserve cartesian liftings but still $H_x$ are monoidal functors between the monoidal fibres as before.

\begin{rmk}\label{rmk:fixedbasefib}
    As is evident from the above descriptions, the 2-categories $\MonFib(\X)$ and $\PsMon(\Fib(\X))$ are  different in general. A monoidal fibration over $\X$ is a strict monoidal functor, whereas a pseudomonoid in fixed-base fibrations is a fibration with monoidal fibres in a coherent way: none of the base or the total category need to be monoidal. This corresponds to the two distinct legs of \cref{clarifyingdiag} concerning fibrations.
\end{rmk}

\subsection{Monoidal Indexed Categories}\label{sec:monicat}

The 2-categories of indexed and opindexed categories $\ICat$ and $\OpICat$, recalled in \cref{sec:fibrations}, are both monoidal.
Explicitly, given two indexed categories $\M \maps \X^\op \to \Cat$ and $\N \maps \Y^\op \to \Cat$, their tensor product $\M \otimes \N \maps (\X \times \Y)^\op \to \Cat$ is the composite 
\begin{equation}\label{ICat_cart}
    (\X \times \Y)^\op \cong \X^\op \times \Y^\op \xrightarrow{\M \times \N} \Cat \times \Cat \xrightarrow{\times} \Cat
\end{equation}
i.e.\ $(\M \ot \N)(x, y) = \M(x) \times \N(y)$ using the cartesian monoidal structure of $\Cat$. The monoidal unit is the indexed category $\Delta \1 \colon \1^\op \to \Cat$ that picks out the terminal category $\1$ in $\Cat$, and similarly for opindexed categories. Notice that this monoidal 2-structure, formed pointwise in $\Cat$, is also cartesian.

We call a pseudomonoid in $(\ICat, \otimes, \Delta \1)$ a \define{monoidal indexed category}. Explicitly, it is an indexed category $\M \maps \X^\op \to \Cat$ equipped with multiplication and unit indexed 1-cells $(\otimes_\X, \mu) \maps \M \otimes \M \to \M$, $(\eta, \mu_0) \maps \Delta \mathbf 1 \to \M$ which by \cref{eq:indexed1cell} look like
\[
\begin{tikzcd}[column sep=.7in,row sep=.2in]
\X^\op \times \X^\op\ar[dr, "\M \otimes \M"]\ar[dd, "\otimes^\op"'] \\ \ar[r,phantom,"\Downarrow{\scriptstyle\mu}"description] & \Cat \\
\X^\op \ar[ur,"\M"']
\end{tikzcd}\qquad 
\begin{tikzcd}[column sep=.7in,row sep=.2in]
\1^\op\ar[dr, "\Delta\1"]\ar[dd, "I^\op"'] \\ \ar[r,phantom,"\Downarrow{\scriptstyle\mu_0}"description] & \Cat \\
\X^\op \ar[ur,"\M"']
\end{tikzcd}
\]
These come equipped with invertible indexed 2-cells 
as in \cref{alphalambdarho}; the axioms this data is required to satisfy, on the one hand, render $\X$ a monoidal category with $\otimes\maps \X \times \X \to \X$ its tensor product functor and $I\colon\1\to\X$ its unit. On the other hand,  
the resulting axioms for the components
\begin{equation}\label{eq:laxatorunitor}
\mu_{x,y} \maps \M x \times \M y \to \M(x\otimes y),\qquad
\mu_0\colon\1\to\M(I)
\end{equation}
of the above pseudonatural transformations precisely give $\M$ the structure of a (weakly) lax monoidal pseudofunctor, recalled in \cref{sec:Monoidal2cats}.

\begin{prop}\label{def:monoidal_indexedcat}
    A monoidal indexed category $(\M, \mu, \mu_0) \maps (\X^\op, \otimes^\op, I) \to (\Cat, \times, \1)$ is a lax monoidal pseudofunctor, where $(\X, \otimes, I)$ is an (ordinary) monoidal category.
\end{prop}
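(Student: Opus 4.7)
The plan is to unpack the data of a pseudomonoid $(\M,\mlt,\uni,\alpha,\lambda,\rho)$ in $(\ICat,\ot,\Delta\1)$ and identify it, piece by piece, with a monoidal category $(\X,\ot,I)$ together with a lax monoidal pseudofunctor $\M\colon\X^\op\to\Cat$. By \cref{eq:indexed1cell}, the multiplication 1-cell $\mlt\colon\M\ot\M\to\M$ decomposes into an underlying base functor, which must be of the form $\ot^\op\colon(\X\times\X)^\op\to\X^\op$ for some $\ot\colon\X\times\X\to\X$, together with a pseudonatural transformation whose components $\mu_{x,y}\colon\M x\times\M y\to\M(x\ot y)$ will play the role of the laxator. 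Likewise, $\uni\colon\Delta\1\to\M$ yields a functor $I\colon\1\to\X$ and a component $\mu_0\colon\1\to\M(I)$. Each of the invertible 2-cells $\alpha,\lambda,\rho$ decomposes, via \cref{eq:indexed2cell}, into a natural isomorphism on bases plus an invertible modification on fibres.

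The next step is to extract the monoidal structure on $\X$ by projecting to the base. The forgetful 2-functor $\ICat\to\Cat$ sending an indexed category to its domain (\cref{rem:ICatisfibred}) is strict monoidal with respect to the cartesian structures, because by \cref{ICat_cart} the base of $\M\ot\N$ is exactly $\X\times\Y$. Hence by \cref{prop:PsMon2functor} it sends the pseudomonoid $\M$ to a pseudomonoid in $(\Cat,\times,\1)$, which is nothing but a monoidal category by the example recorded after \cref{monoidal2cell}. Thus the base parts of $\mlt,\uni,\alpha,\lambda,\rho$ equip $\X$ with a tensor $\ot$ and unit $I$ satisfying Mac Lane's pentagon and triangle, as extracted from the base component of the pseudomonoid coherence.

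With $\X$ now recognised as an ordinary monoidal category, what remains is to match the fibrewise components with the data $\mu,\mu_0,\omega,\zeta,\xi$ of a lax monoidal pseudofunctor as in \cref{eq:weakmonpseudo}--\cref{eq:omega}. The modifications extracted from $\alpha,\lambda,\rho$ supply the three invertible modifications $\omega,\zeta,\xi$ at each triple or singleton of objects, and their invertibility is automatic from the invertibility criterion for indexed 2-cells recalled above \cref{eq:indexed1cell}. Finally, the residual pseudomonoid coherence conditions, once the base-level Mac Lane axioms have been peeled off, unwind to precisely the coherence axioms imposed on a lax monoidal pseudofunctor. The main technical obstacle I expect is the careful bookkeeping of contravariance: since $\M$ is contravariant and the multiplication sits over $\ot^\op$, one must verify that the pseudonaturality squares for $\mu$ at a pair $(f,g)$ in $\X\times\X$ take the expected form $\M(f\ot g)\circ\mu_{x',y'}\simrightarrow\mu_{x,y}\circ(\M f\times\M g)$, and that they intertwine correctly with the pseudofunctoriality constraints $\delta$ and $\gamma$ of $\M$. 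Starting conversely from a monoidal $\X$ and a lax monoidal pseudofunctor $\M$, one repackages the base structure together with the laxators and coherence modifications into an $\ICat$-pseudomonoid, and the two constructions are mutually inverse by construction.
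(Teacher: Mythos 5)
Your proposal is correct and follows essentially the same route as the paper: the paper likewise unpacks the pseudomonoid structure in $(\ICat,\otimes,\Delta\1)$ via \cref{eq:indexed1cell} and \cref{eq:indexed2cell} into base-level data (making $\X$ a monoidal category) and fibrewise components $\mu_{x,y}$, $\mu_0$ with the modifications $\omega,\zeta,\xi$ (making $\M$ lax monoidal), with the same caveat about contravariance. Your use of the strict monoidal forgetful 2-functor $\ICat\to\Cat$ together with \cref{prop:PsMon2functor} to extract the monoidal structure on $\X$ is a slightly more formal packaging of what the paper does by direct inspection, but it is not a different argument.
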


We then define a \define{monoidal indexed 1-cell} to be a (strong) morphism between pseudomonoids in $(\ICat,\otimes,\Delta\1)$. In detail, it is an indexed 1-cell $(F, \tau) \maps \M \to \N$
\begin{displaymath}
\begin{tikzcd}[column sep=.7in,row sep=.2in]
\X^\op\ar[dr, "\M"]\ar[dd, "F^\op"'] \\ \ar[r,phantom,"\Downarrow{\scriptstyle\tau}"description] & \Cat \\
\Y^\op \ar[ur,"\N"']
\end{tikzcd}
\end{displaymath}
between two monoidal indexed categories
$(\M, \mu, \mu_0)$ and $(\N, \nu, \nu_0)$ equipped with two invertible indexed 2-cells $(\psi, m)$ and $(\psi_0, m_0)$ as in \cref{eq:laxmorphism}, which explicitly consist of natural isomorphisms $\psi$, $\psi_0$ and invertible modifications
\[
\begin{tikzcd}[row sep=.3in, column sep=.25in]
    \X^\op \times \X^\op\ar[d,"\otimes^\op"']
    \ar[dr, "F^\op \times F^\op" description]
    \ar[drrr, "\M \otimes \M", bend left=20]
    \ar[drrr, phantom,bend left=5,
    "\Downarrow{\scriptstyle\tau\times\tau}"]
    &&&&
    \X^\op \times \X^\op\arrow[drrr, "\M \otimes \M", bend left]\ar[d,"\otimes^\op"']\ar[drrr, phantom, "\Downarrow{\scriptstyle\mu}"description]
    \\
    \X^\op\ar[d, "F^\op"']\ar[r,phantom,"\Downarrow{\scriptstyle\psi}"description,bend right]
    &
    \Y^\op \times \Y^\op\ar[rr, "\N \otimes \N"description]\ar[dr, "\otimes^\op" description]\ar[drr,phantom,"\Downarrow{\scriptstyle\nu}"description]
    &&
    \Cat\ar[r, phantom, "\stackrel{m}{\Rrightarrow}"]
    &
    \X^\op\ar[rrr, "\M"description]\ar[d,"F^\op"']\ar[drrr, phantom, "\Downarrow{\scriptstyle\tau}"description]
    &&&
    \Cat
    \\
    \Y^\op\ar[rr,"\mathrm{id}"']
    &&
    \Y^\op\ar[ur, "\N"',bend right]
    & \phantom{A} &
    \Y^\op\ar[urrr, "\N"',bend right]
    &&& \phantom{A}
\end{tikzcd}
\]

\[
\begin{tikzcd}[row sep=.25in, column sep=.3in]
    \1^\op\ar[d,"I^\op"']\ar[drrr, "\Delta\1", bend left=20]
    \ar[drrr, phantom,"\Downarrow{\scriptstyle\nu_0}"]\ar[ddrr,"I^\op"description]
    &&&&&
    \1^\op\ar[drrr, "\Delta\1", bend left]\ar[d,"I^\op"']\ar[drrr, phantom, "\Downarrow{\scriptstyle\mu_0}"description]
    \\
    \X^\op\ar[d, "F^\op"']\ar[r,phantom,"\Downarrow{\scriptstyle\psi_0}"description,bend right]
    & \phantom{A} &&
    \Cat\ar[rr, phantom, "\stackrel{m_0}{\Rrightarrow}"]
    &&
    \X^\op\ar[rrr, "\M"description]\ar[d,"F^\op"']\ar[drrr, phantom, "\Downarrow{\scriptstyle\tau}"description]
    &&&
    \Cat
    \\
    \Y^\op\ar[rr,"\mathrm{id}"']
    &&
    \Y^\op\ar[ur, "\N"',bend right]
    & \phantom{A} &&
    \Y^\op\ar[urrr, "\N"',bend right]
    &&& \phantom{A}
\end{tikzcd}
\]
as dictated by the general form \cref{eq:indexed2cell} of indexed 2-cells. 
The natural isomorphisms $\psi$ and $\psi_0$ have components
\begin{displaymath}
\psi_{x,z}\colon Fx\otimes Fy\xrightarrow{\sim} F(x\otimes y),\quad
\psi_0\colon I\xrightarrow{\sim} F(I)\quad \textrm{ in $\Y^\op$}
\end{displaymath}
whereas the modifications $m$ and $m_0$ are given by families of invertible natural transformations 
\begin{displaymath}
\begin{tikzcd}[column sep=.3in,row sep=.2in]
& \N Fx{\times}\N Fy\ar[r,"\nu_{Fx,Fy}"]\ar[dr,dashed] & \N (Fx\otimes Fy)\ar[d,"\N\psi_{x,y}"] \\
\M x{\times}\M y
\ar[ur,"\tau_x\times\tau_y"]\ar[dr,"\mu_{x,y}"']\ar[rr,phantom,"\Downarrow{\scriptstyle m_{x,y}}"description] && \N F(x\otimes y) \\
& \M (x\otimes y)\ar[ur,"\tau_{x\otimes y}"'] &
\end{tikzcd}\quad
\begin{tikzcd}[column sep=.2in,row sep=.2in]
& \N(I)\ar[dr,"\N\psi_0"] & \\
\1\ar[ur,"\nu_0"]\ar[dr,"\mu_0"']\ar[rr,phantom,"\Downarrow{\scriptstyle m_0}"description] && \N(FI) \\
& \M(I)\ar[ur,"\tau_I"'] &
\end{tikzcd}
\end{displaymath}
The appropriate coherence axioms ensure that the functor $F \colon \X \to \Y$ has a strong monoidal structure $(F, \psi, \psi_0)$, and that the pseudonatural transformation $\tau \colon \M \Rightarrow \N \circ F^\op$ is monoidal with $m_{x,y}$, $m_0$ as in \cref{eq:monpseudocomponents}. Notice that $F^\op$ being monoidal makes $F$ monoidal with inverse structure isomorphisms. 

\begin{prop}\label{prop:moni1cell}
    A monoidal indexed 1-cell between two monoidal indexed categories $\M$ and $\N$ is an indexed 1-cell $(F, \tau)$, where the functor $F$ is (strong) monoidal and the pseudonatural transformation $\tau$ is monoidal. 
\end{prop}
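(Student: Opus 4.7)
The plan is to directly unpack the definition of a strong morphism of pseudomonoids in the cartesian monoidal 2-category $(\ICat, \otimes, \Delta\1)$ between the two pseudomonoids $\M$ and $\N$, and observe that after elementary rewriting the resulting data and axioms coincide with those of a strong monoidal functor $F$ together with a monoidal pseudonatural transformation $\tau$ in the sense of \cref{sec:Monoidal2cats}.

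First I would apply the abstract notion of strong morphism of pseudomonoids from \cref{eq:laxmorphism,eq:axiomslaxmorphism} specialized to $\ICat$. This gives an underlying indexed 1-cell $(F,\tau)\colon\M\to\N$ together with two invertible indexed 2-cells $(\psi,m)$ and $(\psi_0,m_0)$ whose shapes are prescribed by \cref{eq:indexed2cell}. Spelling these out, the first component of each is a natural isomorphism relating $F^{\op}\circ\otimes^{\op}_\X$ with $\otimes^{\op}_\Y\circ(F^{\op}\times F^{\op})$, respectively $F^{\op}\circ I^{\op}_\X$ with $I^{\op}_\Y$, and the second component is a modification whose components $m_{x,y}$ and $m_0$ fit precisely into the pentagon and triangle drawn just above the statement of the proposition.

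Next, I would argue that the coherence axioms \cref{eq:axiomslaxmorphism} split cleanly into two layers according to the two components of an indexed 2-cell. The constraints involving only the first components $\psi,\psi_0$ reproduce the associativity and unit pentagons for a strong monoidal structure on $F^{\op}$, and hence equivalently on $F$ with inverse structure isomorphisms, using that $(\X,\otimes_\X,I_\X)$ and $(\Y,\otimes_\Y,I_\Y)$ are genuine monoidal categories by \cref{def:monoidal_indexedcat}. The remaining constraints involve the modifications $m$ and $m_0$ stacked over this base structure and, once compared with the explicit formulas \cref{eq:monpseudocomponents}, are exactly the axioms making $\tau\colon\M\Rightarrow\N\circ F^{\op}$ a monoidal pseudonatural transformation over the now monoidal $F^{\op}$.

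The step that warrants the most care is the bookkeeping around $(-)^{\op}$: the transformations $\psi,\psi_0$ naturally live in $\Y^{\op}$, so translating them into monoidal-constraint isomorphisms on $F$ with their customary orientation requires inverting the 2-cells and checking that the reassembled coherence diagrams remain equivalent to the original ones. Once this orientation is consistently fixed, each step of the unfolding is reversible, so starting instead from a strong monoidal $F$ and a monoidal $\tau$ one can reassemble the indexed 2-cells $(\psi,m)$ and $(\psi_0,m_0)$ satisfying \cref{eq:axiomslaxmorphism}, which gives the converse direction and yields the claimed characterization.
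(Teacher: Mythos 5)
Your proposal is correct and follows essentially the same route as the paper: the paper likewise obtains \cref{prop:moni1cell} by unpacking a strong morphism of pseudomonoids in $(\ICat,\otimes,\Delta\1)$ into an indexed 1-cell $(F,\tau)$ with invertible indexed 2-cells $(\psi,m)$ and $(\psi_0,m_0)$, reading off the strong monoidal structure on $F$ from $\psi,\psi_0$ (with the same remark that $F^{\op}$ monoidal gives $F$ monoidal via inverse structure isomorphisms) and the monoidal structure on $\tau$ from the components $m_{x,y},m_0$ as in \cref{eq:monpseudocomponents}. Your explicit attention to the splitting of the axioms \cref{eq:axiomslaxmorphism} into the base-level and modification-level layers, and to the reversibility of the unfolding, matches what the paper leaves implicit.
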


Finally, a \define{monoidal indexed 2-cell} is a 2-cell between morphisms of pseudomonoids in $(\ICat,\otimes,\Delta\1)$. Following the definition of \cref{sec:Monoidal2cats}, it turns out that an indexed 2-cell $(a,m)\colon(F,\tau)\Rightarrow(G,\sigma)\colon\M\to\N$ as in \cref{eq:indexed2cell}, which consists of a natural transformation $\alpha\colon F\Rightarrow G$ and a modification $m$ with components
\begin{displaymath}
\begin{tikzcd}[column sep=.5in,row sep=.15in]
\M x\ar[dr,bend right=10,"\sigma_x"']\ar[rr,bend left,"\tau_x"] \ar[rr,phantom,"\Downarrow{\scriptstyle m_x}"description] && \N Fx \\
& \N Gx\ar[ur,bend right=10,"\N\alpha_x"'] & 
\end{tikzcd}
\end{displaymath}
is monoidal, exactly when $\alpha\colon F\Rightarrow G$ is compatible with the strong monoidal structures of $F$ and $G$, and the modification $m \colon \tau \Rrightarrow \N \alpha^\op \circ \sigma$ satisfies \cref{eq:monoidalmodaxioms} for the induced monoidal structures on its domain and target pseudonatural transformations.

\begin{prop}\label{prop:moni2cell}
A monoidal indexed 2-cell between two monoidal indexed 1-cells $(F, \tau)$ and $(G, \sigma)$ is an indexed 2-cell  $(\alpha, m)$ such that $\alpha$ is an ordinary monoidal natural transformation and $m$ is a monoidal modification.
\end{prop}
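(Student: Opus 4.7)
The plan is to unpack \cref{monoidal2cell} for the pseudomonoids $(\M,\mu,\mu_0)$ and $(\N,\nu,\nu_0)$ in the cartesian monoidal 2-category $(\ICat,\otimes,\Delta\1)$, and to trace through how the two axioms (compatibility with multiplication and with unit) decompose into conditions on the two pieces $\alpha$ and $m$ of an indexed 2-cell. As in the proofs of \cref{prop:moni1cell} and \cref{prop:monoidalfibred1cell}, this is essentially a bookkeeping argument: the 2-cells in $\ICat$ are themselves pairs \cref{eq:indexed2cell}, and pasting these pairs against the multiplication/unit indexed 1-cells of $\M$ and $\N$ produces two layers of equations, one at the level of base-functor 1-cells and one at the level of the components indexed by each object.

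First I would write out both sides of the first axiom in \cref{monoidal2cell}, which lives in the hom-category $\ICat(\M\otimes\M,\N)$, as pairs $(\text{natural transformation of base functors},\text{modification})$. The first component of each side gives the equation
\[
\psi_{x,y}\circ(\alpha_x\otimes\alpha_y) \;=\; \alpha_{x\otimes y}\circ\phi_{x,y}
\]
(with $(F,\phi,\phi_0)$ and $(G,\psi,\psi_0)$ the monoidal structures on $F$, $G$ from \cref{prop:moni1cell}), which is precisely the multiplicativity condition for $\alpha$ to be a monoidal natural transformation $F\Rightarrow G$. The second component, read off at each $(x,y)\in\X\times\X$, is exactly the hexagonal axiom \cref{eq:monoidalmodaxioms} for $m$ with respect to the monoidal structure maps $\mu_{x,y}$, $\nu_{Fx,Fy}$ of $\M$ and $\N\circ F^\op$, induced from the monoidal structure of $\tau$ and $\sigma$.

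Next I would do the same for the unit axiom of \cref{monoidal2cell}. Its base-functor component yields $\psi_0=\alpha_I\circ\phi_0$, the unit compatibility for $\alpha$ being a monoidal natural transformation, and its modification component gives the unit triangle of \cref{eq:monoidalmodaxioms} for $m$. Together with the previous step, this proves that the pseudomonoid-2-cell axioms are equivalent to the conjunction ``$\alpha$ is a monoidal natural transformation and $m$ is a monoidal modification'', as claimed.

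The only genuine obstacle is purely notational: keeping straight the fact that taking opposites is the 2-functor $(-)^\op\colon\Cat\to\Cat^{co}$, which flips the direction of the modifications when one pastes them into the diagrams for $\M\otimes\M$ and $\Delta\1$. Concretely, the target of $m$ involves $\N\alpha^\op\circ\sigma$ rather than $\sigma$ alone, so when verifying the hexagon one must use the naturality isomorphisms $\nu_{f,g}$ of $\N$ to commute $\N\alpha^\op$ past $\nu_{Fx,Fy}$. Once this bookkeeping is set up, every equation follows by inspection from the definitions given in \cref{sec:Monoidal2cats}, and no further structure is needed.
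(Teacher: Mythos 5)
Your proposal is correct and follows essentially the same route as the paper, which likewise treats this as a direct unpacking of the pseudomonoid 2-cell axioms \cref{monoidal2cell} in $(\ICat,\otimes,\Delta\1)$ into their two components: the base-level equations giving monoidal naturality of $\alpha$, and the component-level equations giving the monoidal modification axioms \cref{eq:monoidalmodaxioms} for $m$ relative to the induced monoidal structures on $\tau$ and $\N\alpha^\op\circ\sigma$. Your explicit note about whiskering with $(-)^\op$ and commuting $\N\alpha^\op$ past $\nu$ is exactly the bookkeeping the paper leaves implicit.
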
 

We write $\PsMon(\ICat) = \Mon\ICat$, the 2-category of monoidal indexed categories, monoidal indexed 1-cells and monoidal indexed 2-cells. Moreover, their braided and symmetric counterparts form $\Br\Mon\ICat$ and $\Sym\Mon\ICat$ respectively, as the 2-categories of braided and symmetric pseudomonoids in $(\ICat,\otimes,\Delta\1)$ formally discussed in \cref{sec:Monoidal2cats}.
Similarly, we have 2-categories of \define{(braided} or \define{ symmetric) monoidal opindexed} categories, 1-cells and 2-cells $\Mon\OpICat$, $\Br\Mon\OpICat$ and $\Sym\Mon\OpICat$.

All these 2-categories have sub-2-categories of monoidal (op)indexed categories with a fixed monoidal domain $(\X,\otimes, I)$, and specifically
\begin{gather}\label{eq:monicatX}
\MonICat(\X)=\MonTCat_\pse(\X^\op,\Cat) \\ \Mon\OpICat(\X)=\MonTCat_\pse(\X,\Cat)\nonumber
\end{gather}
the functor 2-categories of lax monoidal pseudofunctors, monoidal pseudonatural transformations and monoidal modifications; these belong to the lower left foot of the diagram \cref{clarifyingdiag}, now on the side of indexed categories.

Moreover, we can consider pseudomonoids in the strict context. Explicitly, the 2-category $\PsMon(\ICat_\spl)=\MonICat_\spl$ has as objects \define{monoidal strict indexed categories} namely (2-)functors $\ps{M}\colon\X^\op\to\Cat$ from an ordinary monoidal category $\X$ which are weakly lax monoidal as before, but the laxator and unitor \cref{eq:laxatorunitor} are strictly natural rather than pseudonatural transformations as in the original \cref{eq:weakmonpseudo}. The hom-categories $\PsMon(\ICat_\spl)(\ps{M},\ps{N})$ between monoidal strict indexed categories are full subcategories of $\MonICat(\ps{M},\ps{N})$ spanned by strict natural transformations -- which are however still weakly monoidal, i.e. equipped with isomorphisms \cref{eq:monpseudocomponents}.

Similarly to the previous \cref{sec:monfib} on fibrations, we end this section with the study of pseudomonoids in a different but related monoidal 2-category, namely $\ICat(\X)=\TCat_\pse(\X^\op,\Cat)$ of indexed categories with a fixed domain $\X$. Working in this 2-category, or in $\OpICat(
\X)$, there is no assumed monoidal structure on $\X$. Their monoidal structure is again cartesian: for two $\X$-indexed categories $\M, \N \colon \X^\op \to \Cat$, their product is
\begin{equation}\label{eq:icatxprod}
    \M\boxtimes\N\colon\X^\op \xrightarrow{\Delta} \X^\op \times \X^\op \xrightarrow{\M \times \N} \Cat \times \Cat \xrightarrow{\times} \Cat
\end{equation}
with pointwise components $(\M \boxtimes \N) (x) = \M (x) \times \N (x)$ in $\Cat$. The monoidal unit is just $\X^\op \xrightarrow{!} \1 \xrightarrow{\Delta \1} \Cat$, which we will also call $\Delta \1$.

A pseudomonoid in $(\ICat(\X),\boxtimes,\Delta\1)$ is a pseudofunctor $\M \colon \X^\op \to \Cat$ equipped with indexed functors \cref{eq:ifun} $\mlt\colon\M\boxtimes\M\to\M$ and $\uni\colon\Delta\1\to\M$ namely
\begin{displaymath}
\begin{tikzcd}[row sep=.1in]
& \X^\op\times\X^\op\ar[r,"\M\times\M"] & \Cat\times\Cat\ar[dr,"\times"] & & & \1\ar[dr,"\Delta\1"] & \\
\X^\op\ar[ur,"\Delta"]\ar[rrr, phantom, "\Downarrow{\scriptstyle\mlt}"description]\ar[rrr,bend right=20,"\M"'] 
&&&\Cat & \X^\op\ar[ur,"!"]\ar[rr,bend right,"\M"']\ar[rr,phantom,"\Downarrow{\scriptstyle\uni}"description] && \Cat
\end{tikzcd}
\end{displaymath}
with components $\mlt_x \colon \M x \times \M x \to \M x$ and $\uni_x \colon \1 \to \M x$ which are pseudonatural via
\begin{equation}\label{eq:pseudonaturalmult}
\begin{tikzcd}[column sep=.6in]
\M x\times \M x\ar[d,"\mlt_x"']\ar[r,"\M f\times\M f"]\ar[dr,phantom,"\cong"description] & \M y\times\M y\ar[d,"\mlt_y"] \\
\M x\ar[r,"\M f"'] & \M y
\end{tikzcd}\quad
\begin{tikzcd}
\1\ar[r,"="]\ar[d,"\uni_x"']\ar[dr,phantom,"\cong"description] & \1\ar[d,"\uni_y"] \\
\M x\ar[r,"\M f"'] & \M y
\end{tikzcd}
\end{equation}
If we denote $\mlt_x=\ot_x$ and $\uni_x=I_x$, the pseudomonoid invertible 2-cells \cref{alphalambdarho} and the axioms these data satisfy make each $\M x$ into a monoidal category $(\M x,\ot_x,I_x)$, and each $\M f$ into a strong monoidal functor: the above isomorphisms have components $\M f(a)\otimes_y\M f(b)\cong\M f(a\otimes_x b)$ and $I_y\cong\M f(I_x)$ for any $a,b\in\M x$. 

Such a structure, namely a pseudofunctor $\M\colon\X^\op\to\MonCat$ into the 2-category of monoidal categories, strong monoidal functors and monoidal natural transformations, was directly defined as an \emph{indexed strong monoidal category} in \cite{DescentForMonads}, and as \emph{indexed monoidal category} in \cite{PontoShulman}. We will avoid this terminology in order to not create confusion with the term \emph{monoidal indexed categories}.

A strong morphism of pseudomonoids \cref{eq:laxmorphism} in $(\ICat(\X),\boxtimes,\Delta\1)$ ends up being a pseudonatural trasformation $\tau\colon\M\Rightarrow\N\colon\X^\op\to\Cat$ (indexed functor) whose components $\tau_x\colon\M x\to\N x$ are strong monoidal functors, whereas a 2-cell between strong morphisms of pseudomonoids is an ordinary modification
\begin{displaymath}
\begin{tikzcd}
\X^\op\ar[rr,bend left=40,"\M",""'{name = F}]\ar[rr,bend right=40,"\N"',""{name = G}] \ar[rr,phantom,"\stackrel{m}{\Rrightarrow}"description] && \Cat
 \arrow[from = F, to = G, Rightarrow, "\tau"',bend right=50]
  \arrow[from = F, to = G, Rightarrow, "\sigma",bend left=50]
\end{tikzcd}
\end{displaymath}
whose components $m_x\colon\tau_x\Rightarrow\sigma_x$ are monoidal natural transformations.

We thus obtain the 2-categories $\PsMon (\ICat(\X))$ as well as $\PsMon(\OpICat(\X))$; from the above descriptions, it is clear that  
\begin{gather}\label{eq:imoncats}
\PsMon(\ICat(\X))=\TCat_\pse(\X^\op, \MonCat) \\
\PsMon(\OpICat(\X))=\TCat_\pse(\X, \MonCat)\nonumber
\end{gather}
which will also be rediscovered by \cref{prop:imoncat_formally}. These 2-categories correspond to the right foot of \cref{clarifyingdiag} on the side of indexed categories.

Finally, taking pseudomonoids in strict $\X$-indexed categories $\ICat_\spl(\X)=[\X^\op,\Cat]$ produces the 2-category $\PsMon(\ICat_\spl(\X))$ with objects functors $\M\colon\X^\op\to\MonCat_\mathrm{st}$ into monoidal categories with strict monoidal functors: the isomorphisms \cref{eq:pseudonaturalmult} are now equalities due to strict naturality of the multiplication and unit. Then the hom-categories $\PsMon(\ICat_\spl(\X))(\ps{M},\ps{N})$ are full subcategories of $\PsMon(\ICat(\X))(\ps{M},\ps{N})$ spanned by strictly natural transformations $\tau\colon\ps{M}\Rightarrow\ps{N}$, still with strong monoidal components $\tau_x$. For example, it would not be correct to write $\PsMon(\ICat_\spl(\X))=[\X^\op,\MonCat_{(\mathrm{st})}]$.

\begin{rmk}\label{rmk:fixedbaseicat}
    Similarly to what was noted in \cref{rmk:fixedbasefib}, it is evident that $\MonICat(\X)$ and $\PsMon(\ICat(\X))$ are in principle different. A monoidal indexed category with base $\X$ is a lax monoidal pseudofunctor into $\Cat$ (and $\X$ is required to be monoidal already), whereas a a pseudomonoid in $\X$-indexed categories is a pseudofunctor from an ordinary category $\X$ into $\MonCat$. This is also highlighted by the indexed category legs of \cref{clarifyingdiag}.
\end{rmk}

\subsection{The equivalence \texorpdfstring{$\MonFib \simeq \MonICat$}{M}} \label{sec:monequiv}

In \cref{sec:fibrations}, we recalled 
the standard equivalence between fibrations and indexed categories via the Grothendieck construction. We will now lift this correspondence to their monoidal versions studied in Section \ref{sec:monfib} and \ref{sec:monicat}, using general results about pseudomonoids in arbitrary monoidal 2-categories described in \cref{sec:Monoidal2cats}.

Since both $\Fib$ and $\ICat$ are cartesian monoidal 2-categories, via \cref{Fib_cart} and \cref{ICat_cart} respectively, our first task is to ensure that they are \emph{monoidally} equivalent.

\begin{lem}\label{lem:monoidalGrfunctor}
    The 2-equivalence $\Fib\simeq\ICat$ between the cartesian monoidal 2-categories of fibrations and indexed categories is (symmetric) monoidal.
\end{lem}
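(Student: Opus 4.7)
The plan is to exploit the fact that both monoidal 2-categories involved carry \emph{cartesian} monoidal structures, which forces the existing 2-equivalence to be (symmetric) monoidal essentially automatically, without having to construct coherence data by hand.

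First, I would emphasize that the product of fibrations $P \times Q \maps \A \times \B \to \X \times \Y$ from \cref{Fib_cart} is genuinely the 2-categorical product in $\Fib$, and likewise the tensor of indexed categories from \cref{ICat_cart} is the 2-categorical product in $\ICat$, with $1_\1$ and $\Delta\1$ serving as terminal objects on the two sides. Since any 2-equivalence preserves 2-limits that exist, both directions of the Grothendieck correspondence must preserve these finite products up to canonical equivalence, and that preservation automatically equips them with a strong symmetric monoidal structure for the cartesian tensor.

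To make this concrete, I would exhibit the comparison isomorphisms. In one direction an object of $\int(\M \otimes \N)$ is a pair $\bigl((x,y),(a,b)\bigr)$ with $a \in \M x$, $b \in \N y$, which is literally an object of $(\int \M)\times(\int \N)$; morphisms, composition \cref{eq:comp_intM}, and cartesian liftings are all formed componentwise, giving a canonical isomorphism of fibrations over $\X \times \Y$, and the unit check $\int(\Delta\1) \cong 1_\1$ is immediate. In the other direction, the fibre of $P \times Q$ over $(x,y)$ is the product $\A_x \times \B_y$ with reindexing $(f,g)^\ast = f^\ast \times g^\ast$, so the associated indexed category is exactly the product of the associated indexed categories of $P$ and $Q$. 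These give the laxator $\mu$ and the unit comparison $\mu_0$ of \cref{eq:weakmonpseudo}, and they are invertible, so the structure is strong.

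The associator modification $\omega$ in \cref{eq:omega}, the unit modifications $\zeta, \xi$, the braiding $v$ of \cref{eq:brweakmonpseudo}, and the analogous data for the quasi-inverse and for the unit/counit of the equivalence, are then all forced by the universal property of products: in a cartesian monoidal 2-category, any product-preserving 2-functor carries a unique-up-to-unique-iso strong symmetric monoidal structure, and all coherence axioms collapse to uniqueness statements for maps into a product. The main obstacle, such as it is, lies in the bookkeeping of pseudonaturality constraints on both sides --- the reindexings $(\M \otimes \N)(f,g) \cong \M f \times \N g$ and the cartesian liftings in $\A \times \B$ are only isomorphic rather than equal --- but because the universal property of products in $\Fib$ and $\ICat$ holds strictly on objects, verifying the modification axioms reduces to routine diagram chases that I would record but not grind through in detail.
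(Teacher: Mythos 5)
Your proposal is correct and follows essentially the same route as the paper: the paper's proof likewise observes that both 2-functors of the equivalence preserve (2-)limits, hence finite products, and are therefore automatically (symmetric) monoidal, with the monoidality of the unit and counit isomorphisms following from the universal property of products. Your explicit description of the comparison isomorphisms $\int(\M\otimes\N)\cong\int\M\times\int\N$ and of the fibres of $P\times Q$ is additional detail the paper leaves implicit, but the underlying argument is identical.
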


\begin{proof}
    Since they form an equivalence, both 2-functors from \cref{thm:Grothendieck} preserve limits, therefore are monoidal 2-functors. Moreover, it can be verified that the natural isomorphisms with components $\ps{F}\cong\ps{F}_{P_\ps{F}}$ and $P\cong P_{\ps{F}_P}$ are monoidal with respect to the cartesian structure, due to universal properties of products.
\end{proof}

As a result, and since $\MonFib=\PsMon(\Fib)$ and $\MonICat=\PsMon(\ICat)$, we obtain the following equivalence as a special case of \cref{prop:2equivpseudomon}; also for $\OpFib\simeq\OpICat$.

\begin{thm}\label{thm:mainthm}
    There are 2-equivalences 
    \begin{gather*}
        \MonFib\simeq \MonICat \\
        \BrMonFib\simeq\Br\Mon\ICat \\
        \SymMonFib\simeq\Sym\Mon\ICat
\end{gather*}
between the 2-categories of monoidal fibrations and monoidal indexed categories, as well as their braided and symmetric versions.

Dually, there is a 2-equivalence $\MonOpFib\simeq\MonOpICat$ between the 2-categories of monoidal opfibrations and monoidal opindexed categories, as well as their braided and symmetric versions.
\end{thm}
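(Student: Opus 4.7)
The plan is to deduce this theorem as a direct corollary of the 2-categorical machinery developed in \cref{sec:Monoidal2cats}, applied to the monoidality of the Grothendieck equivalence established in \cref{lem:monoidalGrfunctor}. Since $\MonFib$ and $\MonICat$ are by definition $\PsMon(\Fib)$ and $\PsMon(\ICat)$, invoking \cref{prop:2equivpseudomon} on the monoidal 2-equivalence $\Fib\simeq\ICat$ immediately yields the first equivalence $\MonFib\simeq\MonICat$. The dual statement $\MonOpFib\simeq\MonOpICat$ is obtained by the same argument applied to the dual Grothendieck 2-equivalence $\OpFib\simeq\OpICat$ of \cref{thm:Grothendieck}(4), whose monoidality with respect to the analogously defined cartesian structures follows from the same limit-preservation reasoning as in \cref{lem:monoidalGrfunctor}.

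For the braided and symmetric variants, the first step is to upgrade \cref{lem:monoidalGrfunctor} to a braided (resp.\ symmetric) monoidal 2-equivalence. This should be essentially automatic, because both $\Fib$ and $\ICat$ carry \emph{cartesian} monoidal structures: any product-preserving 2-equivalence between cartesian monoidal 2-categories is canonically symmetric monoidal, since the symmetry isomorphisms $a\times b\cong b\times a$ are determined up to unique coherent 2-cell by the universal property of products, and are thereby preserved by the Grothendieck 2-functors (which preserve all limits by being part of an equivalence).

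With this enhancement in hand, I would invoke the braided and symmetric analogues of \cref{prop:2equivpseudomon}: applying the 2-functors $\BrPsMon$ and $\SymPsMon$ of \cref{prop:BrPsMon} to a (symmetric) monoidal 2-equivalence yields a 2-equivalence, because any 2-functor preserves 2-equivalences. This delivers $\BrMonFib\simeq\Br\Mon\ICat$ and $\SymMonFib\simeq\Sym\Mon\ICat$, and the corresponding dual statements for opfibrations and opindexed categories by the same argument.

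I do not expect a serious obstacle. The substantive content — monoidality of the underlying Grothendieck equivalence — is already proved in \cref{lem:monoidalGrfunctor}, and everything else is a formal application of the hom-2-functors $\PsMon$, $\BrPsMon$, $\SymPsMon$. The one point I would take care to spell out in the full write-up is the cartesian-monoidal automatism promoting the monoidal equivalence to a symmetric monoidal one, since that is the step where the symmetric/braided cases are reduced to the unbraided case already handled.
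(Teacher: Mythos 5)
Your proposal is correct and follows essentially the same route as the paper: the paper proves \cref{lem:monoidalGrfunctor} asserting that the Grothendieck 2-equivalence $\Fib\simeq\ICat$ is (symmetric) monoidal precisely by the limit-preservation argument you give, and then obtains all three equivalences (and their opfibration duals) as instances of \cref{prop:2equivpseudomon} applied via $\PsMon$, $\BrPsMon$ and $\SymPsMon$. The one point you flag for care --- that a product-preserving 2-equivalence between cartesian monoidal 2-categories is automatically symmetric monoidal --- is exactly the content the paper folds into \cref{lem:monoidalGrfunctor}, so there is no divergence.
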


\begin{cor}\label{cor:fixedbasemonoidalGr}
The above 2-equivalences restrict to the sub-2-categories of fixed bases or domains, which by \cref{eq:monicatX} are
\begin{gather*}
\MonFib(\X) \simeq\MonTCat_\pse(\X^\op,\Cat) \\
\MonOpFib(\X) \simeq\MonTCat_\pse(\X^\op,\Cat)
\end{gather*}
\end{cor}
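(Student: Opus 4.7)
The plan is to deduce this corollary directly from \cref{thm:mainthm} by exploiting the fact that both legs of the monoidal Grothendieck equivalence are compatible with the projections to $\Cat$ sending a fibration to its base and an indexed category to its domain. Recall from \cref{rem:Fibisfibred} and \cref{rem:ICatisfibred} that $\Fib \to \Cat$ and $\ICat \to \Cat$ are themselves fibrations with fibres $\Fib(\X)$ and $\ICat(\X) = \TCat_\pse(\X^\op,\Cat)$, and the ordinary Grothendieck correspondence is compatible with these projections. The same reasoning lifts to the pseudomonoid level: the 2-functors $\MonFib \to \MonCat$ and $\MonICat \to \MonCat$ (taking a monoidal fibration to its monoidal base, and a monoidal indexed category to its monoidal domain category) are preserved by the monoidal Grothendieck 2-functors constructed in \cref{thm:mainthm}, since on objects the base of $P_\M$ is literally the monoidal category $\X$ underlying $\M \colon \X^\op \to \Cat$.

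The second step is to identify the fibres of these projections over a fixed monoidal category $\X$. On the fibration side, a monoidal fibred 1-cell $(H,F)$ between monoidal fibrations both with base $\X$ lies in $\MonFib(\X)$ precisely when $F = 1_\X$ as a monoidal functor, and a monoidal fibred 2-cell $(\beta,\alpha)$ lies there when $\alpha = 1_{1_\X}$; by \cref{prop:monoidalfibred1cell,prop:monfib2cell} this matches the fibred sub-2-category described in \cref{sec:monfib}. On the indexed side, by \cref{prop:moni1cell,prop:moni2cell} together with \cref{eq:monicatX}, the sub-2-category of monoidal indexed 1-cells $(F,\tau)$ with $F = 1_\X$ and monoidal indexed 2-cells $(\alpha,m)$ with $\alpha = 1_{1_\X}$ coincides with $\MonTCat_\pse(\X^\op,\Cat)$. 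Under the correspondence \cref{eq:inducedfibred1cell,eq:inducedfibred2cell} lifted to the monoidal setting, $F = 1_\X$ on one side is sent to $F = 1_\X$ on the other, and similarly for 2-cells, so the equivalence from \cref{thm:mainthm} restricts to each fibre.

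Combining these two paragraphs, the restriction of $\MonFib \simeq \MonICat$ to the fibres over $\X$ yields the asserted $\MonFib(\X) \simeq \MonTCat_\pse(\X^\op,\Cat)$, and the dual argument applied to $\MonOpFib \simeq \MonOpICat$ gives the opfibration version (with $\MonTCat_\pse(\X,\Cat)$ on the right). The main obstacle is purely bookkeeping: verifying that the monoidal structure on $P_\M$ produced by the proof of \cref{thm:mainthm} strictly preserves the base, and that a monoidal fibred 1-cell over $1_\X$ corresponds to a monoidal pseudonatural transformation rather than something lying over a nontrivial monoidal functor $F$. Both checks are immediate from the explicit formulas for the Grothendieck 2-functors, so no new input beyond \cref{thm:mainthm} is required.
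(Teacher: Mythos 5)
Your proposal is correct and follows essentially the same route the paper intends: the corollary carries no separate proof precisely because the monoidal Grothendieck equivalence of \cref{thm:mainthm} is compatible with the projections sending a monoidal (op)fibration to its base and a monoidal (op)indexed category to its domain (the monoidal lift of the $\Cat$-fibred equivalence noted after \cref{thm:Grothendieck}), so it restricts to the fibres over a fixed $\X$, which are exactly $\MonFib(\X)$ and $\MonTCat_\pse(\X^\op,\Cat)$ by the descriptions in \cref{sec:monfib,sec:monicat}. Your remark that the opfibration case should land in $\MonTCat_\pse(\X,\Cat)$ is also right --- the $\X^\op$ in the second displayed line of the corollary as printed is a typo, as one sees from \cref{eq:monicatX}.
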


These results are summarized by the equivalences on the left foot of \cref{clarifyingdiag}, and correspond to the \emph{global} monoidal structure of fibrations and indexed categories.
Even though they were directly derived via abstract reasoning, for exposition purposes we briefly describe this equivalence on the level of objects; some relevant details can also be found in \cite[\S 6]{NetworkModels}. Independently and much earlier, in his thesis \cite{ShulmanPhD} Shulman explores such a fixed-base equivalence on the level of double categories (of monoidal fibrations and monoidal pseudofunctors over the same base).

Suppose that $(\M, \mu, \mu_0) \maps (\X^\op, \otimes, I) \to (\Cat, \times, \1)$
is a monoidal indexed category, i.e.\ a lax monoidal pseudofunctor with structure maps \cref{eq:laxatorunitor}. The induced monoidal product $\otimes_\mu \maps \inta \M \times \inta \M \to \inta \M$ on the Grothendieck category is defined on objects by 
\begin{equation}\label{eq:globalmonstr}
    (x,a) \otimes_\mu (y,b) = (x \otimes y, \mu_{x,y}(a,b))
\end{equation}
and $I_\mu=(I, \mu_0(*))$ is the unit object. 
Clearly, the induced fibration $\inta \ps{M} \to \X$ which maps each pair to the underlying $\X$-object strictly preserves the monoidal structure. Moreover, pseudonaturality of $\mu$ implies that $\otimes_\mu$ preserves cartesian liftings, so all clauses of \cref{def:monoidal_fibration} are satisfied.
For a more detailed exposition of the structure, as well as the braided and symmetric version, we refer the reader to the \cref{sec:monoidal}.

We can also restrict to the context of split fibrations and strict indexed categories. Again by applying $\PsMon(\textrm{-})$ to the 2-equivalence $\ICat_\spl\simeq\Fib_\spl$, we obtain equivalences between the respective structures discussed in \cref{sec:monfib,sec:monicat}, as the strict counterparts of \cref{thm:mainthm} and \cref{cor:fixedbasemonoidalGr}. Recall that a monoidal strict indexed category is a weakly lax monoidal 2-functor $\X^\op\to\Cat$ whose structure maps $(\phi,\phi_0)$ are strictly natural transformations, and corresponds to a split fibration which is monoidal like before, only the tensor product of the total category strictly preserves cartesian liftings.

\begin{thm}\label{thm:mainthmsplit}
    There are 2-equivalences
    \begin{gather*}
        \MonFib_\spl \simeq \MonICat_\spl \\
        \MonOpFib_\spl \simeq \MonOpICat_\spl
    \end{gather*}
    between monoidal split (op)fibrations and monoidal strict (op)indexed categories, as well as for the fixed-base case.
\end{thm}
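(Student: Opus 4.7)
The plan is to mirror the argument used for Theorem \ref{thm:mainthm}, applying the 2-functor $\PsMon(-)$ of \cref{prop:PsMon2functor} to the split version of the Grothendieck 2-equivalence $\Fib_\spl\simeq\ICat_\spl$ that was mentioned right after \cref{thm:Grothendieck}. So the first step is to promote that equivalence to a (symmetric) monoidal 2-equivalence between the cartesian monoidal 2-categories $(\Fib_\spl,\times,1_\1)$ and $(\ICat_\spl,\ot,\Delta\1)$, directly analogous to \cref{lem:monoidalGrfunctor}. The key checks here are that (i) the product of two split fibrations is again split --- cartesian liftings of pairs are chosen componentwise, so the splitting conditions $1^*=1$ and $g^*\circ f^*=(g\circ f)^*$ are preserved on the nose; and (ii) the tensor $\ot$ of \cref{ICat_cart} of two strict 2-functors $\X^\op\to\Cat$, $\Y^\op\to\Cat$ is still a strict 2-functor $(\X\times\Y)^\op\to\Cat$ since composition and identities in the target $\Cat$ are preserved pointwise by the cartesian product of strict functors. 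The coherence isomorphisms $P\cong P_{\M_P}$ and $\M\cong \M_{P_\M}$ restrict to the split setting and remain monoidal by the same universal-property argument as before.

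Once this split monoidal 2-equivalence is in place, \cref{prop:2equivpseudomon} yields an induced 2-equivalence $\PsMon(\Fib_\spl)\simeq\PsMon(\ICat_\spl)$. It then remains to identify these 2-categories with $\MonFib_\spl$ and $\MonICat_\spl$ as described at the end of \cref{sec:monfib,sec:monicat}. For the fibration side, a pseudomonoid in $\Fib_\spl$ is a split fibration $\T\colon\U\to\V$ equipped with multiplication and unit 1-cells \cref{multunitmonoidalfibr} living in $\Fib_\spl$, meaning that $\ot_\U$ and $I_\U$ strictly preserve cartesian liftings; this is precisely the definition of a monoidal split fibration given in \cref{sec:monfib}. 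For the indexed side, a pseudomonoid in $\ICat_\spl$ is a strict 2-functor $\M\colon\X^\op\to\Cat$ equipped with multiplication and unit 1-cells in $\ICat_\spl$, i.e.\ the laxators $\mu_{x,y}$ and $\mu_0$ of \cref{eq:laxatorunitor} are strictly natural (rather than only pseudonatural), while the pseudomonoid 2-cells \cref{alphalambdarho} are still invertible isomorphisms making $\M$ a weakly lax monoidal 2-functor. The hom-level identifications for strong morphisms and 2-cells proceed in exactly the same fashion, using \cref{prop:monoidalfibred1cell,prop:monfib2cell,prop:moni1cell,prop:moni2cell} restricted to the split/strict context.

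The dual case of opfibrations follows verbatim by replacing $\Fib_\spl$, $\ICat_\spl$ by $\OpFib_\spl$, $\OpICat_\spl$, using the dual split 2-equivalence. For the fixed-base version, one restricts the whole argument to the sub-2-categories $\Fib_\spl(\X)$ and $\ICat_\spl(\X)$ of the appropriate slices, noting that the cartesian products \cref{Fib_X_cart} and \cref{eq:icatxprod} restrict to the split/strict setting.

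The main subtlety --- and the step that deserves care --- is recognizing which pieces of the pseudomonoid data remain genuinely weak after strictifying the underlying fibration/functor. In particular, even though $\M\colon\X^\op\to\Cat$ is a strict 2-functor and $\mu,\mu_0$ are strict 2-natural, the associativity and unit constraints $\omega,\zeta,\xi$ of \cref{eq:omega} are still only isomorphisms (since $\X$ need not be a strict monoidal category and $\Cat$ is not strict either). One must also verify that a morphism of pseudomonoids in $\ICat_\spl$ is a strictly natural transformation whose coherence data $(u,u_0)$ of \cref{eq:monpseudocomponents} may still be nontrivial isomorphisms, matching the description of the hom-categories of $\MonICat_\spl$; correspondingly on the fibration side, the morphisms $(H,F)$ strictly preserve cartesian liftings but the monoidal structures $(\phi,\phi_0),(\psi,\psi_0)$ of \cref{prop:monoidalfibred1cell} are still only up to isomorphism. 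Once these bookkeeping identifications are made, the theorem is simply the image of the split Grothendieck equivalence under $\PsMon(-)$.
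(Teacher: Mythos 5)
Your proposal is correct and follows essentially the same route as the paper: the paper likewise obtains \cref{thm:mainthmsplit} by applying $\PsMon(-)$ to the split 2-equivalence $\ICat_\spl\simeq\Fib_\spl$ and then matching the resulting pseudomonoids, morphisms and 2-cells with the explicit descriptions of $\MonFib_\spl$ and $\MonICat_\spl$ given at the ends of \cref{sec:monfib,sec:monicat}. Your bookkeeping remarks about which coherence data remain weak (the modifications of \cref{eq:omega} and the isomorphisms \cref{eq:monpseudocomponents}) agree exactly with the paper's account of the strict context.
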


\begin{rmk}
    Based on an observation made by Mike Shulman in private correspondence with the authors, this monoidal version of the Gro\-the\-ndieck construction may in fact be further generalized to the context of double categories. More specifically, there is evidence of a correspondence between discrete fibrations of double categories, and lax double functors into the double category $\SSpan$ of sets and spans.  If such a result was also true for arbitrary fibrations of double categories, \cref{thm:mainthm} would be a special case for double categories with one object and one vertical arrow, namely monoidal categories.
\end{rmk}

We close this section in a similar manner to Sections \ref{sec:monfib} and \ref{sec:monicat}, namely by working in the cartesian monoidal 2-categories $(\Fib(\X),\boxtimes,1_\X)$ and $(\ICat(\X),\boxtimes,\Delta\1)$ of fibrations and indexed categories with fixed bases and domains, to begin with. 
Since $\Fib(\X)\simeq\ICat(\X)$ is also a monoidal 2-equivalence, \cref{prop:2equivpseudomon} applies once more -- recall \cref{eq:imoncats}.

\begin{thm}\label{thm:fibrmonGr}
    There are 2-equivalences between (op)fibrations with monoidal fibres and strong monoidal reindexing functors, and pseudofunctors into $\MonCat$
    \begin{align*}
        \PsMon(\Fib(\X)) &\simeq \TCat_\pse(\X^\op,\MonCat)\quad 
        \\
        \PsMon(\OpFib(\X)) &\simeq \TCat_\pse(\X^\op,\MonCat) 
    \end{align*}
    Moreover, these restrict to 2-equivalences between split (op)fibrations with monoidal fibres and strict monoidal reindexing functors, and ordinary functors into $\MonCat_\mathrm{st}$.
\end{thm}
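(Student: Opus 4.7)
The plan is to derive both equivalences by applying the 2-functor $\PsMon(-)$ of \cref{prop:PsMon2functor} to the fixed-base Grothendieck 2-equivalence $\Fib(\X)\simeq\ICat(\X)$ from \cref{thm:Grothendieck}(3). First I would verify, exactly as in \cref{lem:monoidalGrfunctor}, that this is a \emph{monoidal} 2-equivalence with respect to the cartesian monoidal structures $\boxtimes$ of \cref{Fib_X_cart} and \cref{eq:icatxprod}: both 2-functors preserve these pointwise/pullback products because they form a 2-equivalence (hence preserve all limits), and the coherence isomorphisms are monoidal by the universal property of products. On the level of objects, one can see this concretely: given $\M,\N\colon\X^\op\to\Cat$, the objects of $\inta(\M\boxtimes\N)$ are triples $(x,a\in\M x,b\in\N x)$, which matches the pullback $\inta\M\times_\X\inta\N$. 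Invoking \cref{prop:2equivpseudomon} then immediately yields $\PsMon(\Fib(\X))\simeq\PsMon(\ICat(\X))$.

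It remains to identify $\PsMon(\ICat(\X))$ with $\TCat_\pse(\X^\op,\MonCat)$, as already asserted in \cref{eq:imoncats} and anticipated in \cref{prop:imoncat_formally}. I would argue this by unpacking the data: a pseudomonoid structure on $\M\colon\X^\op\to\Cat$ with respect to the pointwise cartesian product $\boxtimes$ supplies, at each $x$, functors $\mlt_x\colon\M x\times\M x\to\M x$ and $\uni_x\colon\1\to\M x$ together with invertible 2-cells that precisely make $\M x$ into a monoidal category, while the pseudonaturality isomorphisms \cref{eq:pseudonaturalmult} for $\mlt,\uni$ are exactly the strong monoidal coherence data for each reindexing functor $\M f$. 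The coherence axioms of the pseudomonoid match those of a pseudofunctor into $\MonCat$ under this translation, and an analogous pointwise check on strong morphisms and 2-cells of pseudomonoids recovers monoidal pseudonatural transformations and monoidal modifications with components in $\MonCat$. The dual case for $\OpFib$ is entirely analogous, starting from $\OpFib(\X)\simeq\OpICat(\X)=\TCat_\pse(\X,\Cat)$.

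For the split statement, apply the same strategy to the monoidal 2-equivalence $\Fib_\spl(\X)\simeq\ICat_\spl(\X)=[\X^\op,\Cat]$. After taking pseudomonoids, the \emph{strict} naturality now required of the multiplication and unit forces each reindexing functor $\M f$ to strictly preserve the tensor product and unit on the fibres; hence $\M$ factors through $\MonCat_{\mathrm{st}}$, recovering ordinary functors $\X^\op\to\MonCat_{\mathrm{st}}$ on objects. The main conceptual obstacle I anticipate is the careful matching of pseudomonoid coherence with pseudofunctor-into-$\MonCat$ coherence in the non-split case: this is essentially a diagram chase, but it is cleanest to phrase it as a general fact that $\PsMon$ commutes with pointwise-cartesian pseudofunctor 2-categories, so that the bookkeeping need only be done once. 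Once this identification is in place, both the global equivalence and its split restriction fall out simultaneously.
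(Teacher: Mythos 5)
Your proposal is correct and follows essentially the same route as the paper: the paper likewise observes that $\Fib(\X)\simeq\ICat(\X)$ is a monoidal 2-equivalence for the cartesian $\boxtimes$ structures, applies \cref{prop:2equivpseudomon} to pass to pseudomonoids, and then invokes the identification $\PsMon(\ICat(\X))\simeq\TCat_\pse(\X^\op,\MonCat)$ of \cref{eq:imoncats} (obtained by the same pointwise unpacking you describe, and formally in \cref{prop:imoncat_formally}). Your caveat that the split case recovers functors into $\MonCat_{\mathrm{st}}$ only on objects, with 1-cells still having strong monoidal components, also matches the paper's remark.
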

These equivalences establish the right leg of \cref{clarifyingdiag}, and correspond to the \emph{fibrewise} monoidal structure on fibrations and indexed categories.
In more detail, a pseudofunctor $\M\colon\X^\op\to\MonCat$ maps every object $x$ to a monoidal category $\M x$ and every morphism $f\colon x\to y$ to a strong monoidal functor $\M f\colon\M y\to\M x$; under the usual Grothendieck construction, these are precisely the fibre categories and the reindexing functors between them for the induced fibration, as described at the end of \cref{sec:monfib}. 
Notice how, in particular, $\X$ is \emph{not} a monoidal category, as was the case in \cref{cor:fixedbasemonoidalGr}.

\begin{rmk}
A very similar, relaxed version of the fibrewise monoidal correspondence seems to connect the concepts of an \emph{indexed monoidal category}, defined in \cite{DescentForMonads} as a pseudofunctor $\M\colon\X^\op\to\MonCat_\lax$, and that of of a \emph{lax monoidal fibration}, defined in \cite{LaxMonFibs}. Notice that these terms are misleading with respect to ours: an indexed monoidal category is \emph{not} a monoidal indexed category, and also a lax monoidal fibration is \emph{not} a functor with a lax monoidal stucture.

Briefly, there is a full sub-2-category $\Fib_\opl(\X)\subseteq\Cat/\X$ of fibrations, namely fibred 1-cells \cref{commutativefibredcell} which are not required to have a cartesian functor on top. As discussed in \cite[Prop.3.6]{FramedBicats}, this is 2-equivalent to $\TCat_{ps,opl}(\X^\op,\Cat)$, the 2-category of pseudofunctors, oplax natural transformations and modifications. Describing pseudomonoids therein appears to give rise to a fibration with monoidal fibres and \emph{lax} monoidal reindexing functors between them, or equivalently a pseudofunctor into $\MonCat_\lax$. We omit the details so as to not digress from our main development.
\end{rmk}

\section{(Co)cartesian case: fibrewise and global monoidal structures}\label{sec:fibrewisemonoidal}

In the previous section, we obtain two different equivalences between fixed-base fibrations and fixed-domain indexed categories of monoidal flavor: \cref{cor:fixedbasemonoidalGr} where both total and base categories are monoidal, and \cref{thm:fibrmonGr} where only the fibres are monoidal, namely the two different legs of \cref{clarifyingdiag}. 
Clearly, neither of these two cases implies the other in general. The global monoidal structure as defined in \cref{eq:globalmonstr} sends two objects in arbitrary fibres to a new object lying in the fibre of the tensor of their underlying objects in the base, whereas having a fibrewise tensor products does not give a way of multiplying objects in different fibres of the total category.

In \cite{FramedBicats}, Shulman introduces monoidal fibrations (\cref{def:monoidal_fibration}) as a building block for fibrant double categories.
Due to the nature of the examples, the results restrict to the case where the base of the monoidal fibration $\T \colon \U \to \V$ is equipped with specifically a cartesian or cocartesian monoidal structure; 
the main idea is that these fibrations form a ``parameterized family of monoidal categories''. Formally, a central result therein lifts the Grothendieck construction to the monoidal setting, by showing an equivalence between monoidal fibrations over a fixed (co)cartesian base and ordinary pseudofunctors into $\MonCat$.

\begin{thm}\cite[Thm. 12.7]{FramedBicats}\label{thm:Shulman}
    If $\X$ is cartesian monoidal,
    \begin{equation}\label{eq:Shulmanequiv}
        \MonFib(\X)\simeq\TCat_\pse(\X^\op,\MonCat)
    \end{equation}
    Dually, if $\X$ is cocartesian monoidal, $\MonOpFib(\X)\simeq\TCat_\pse(\X,\MonCat)$.
\end{thm}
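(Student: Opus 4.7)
The plan is to derive this as the composite of two 2-equivalences
\[
\MonFib(\X)\simeq\MonTCat_\pse(\X^\op,\Cat)\simeq\TCat_\pse(\X^\op,\MonCat),
\]
where the first is \cref{cor:fixedbasemonoidalGr}, valid for an arbitrary monoidal base, and the second is \cref{lem:helplemma}, which essentially uses the cartesian hypothesis on $\X$. Once \cref{lem:helplemma} is established, the cartesian case follows immediately; the cocartesian dual for $\MonOpFib(\X)$ is then obtained by a symmetric argument.

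To prove \cref{lem:helplemma}, I would exploit the fact that every object $x$ of a cartesian monoidal category carries a canonical cocommutative comonoid structure $(\Delta_x\colon x\to x\times x,\,!_x\colon x\to I)$, natural in morphisms of $\X$. Given a lax monoidal pseudofunctor $(\M,\mu,\mu_0)\colon\X^\op\to\Cat$, applying $\M$ to this data produces a fibrewise monoidal product and unit
\[
\ot_x\colon\M x\times\M x\xrightarrow{\mu_{x,x}}\M(x\times x)\xrightarrow{\M\Delta_x}\M x,\qquad I_x\colon\1\xrightarrow{\mu_0}\M(I)\xrightarrow{\M!_x}\M x,
\]
with coherences inherited from those of $\M$ via the (co)associativity and (co)unit of the comonoid; pseudonaturality of $\mu,\mu_0$ combined with naturality of $\Delta,!$ upgrades each reindexing functor $\M f$ to a strong monoidal functor. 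Conversely, given $\N\colon\X^\op\to\MonCat$ with fibrewise tensors $\ot_x$, I would define the candidate laxator and unitor by
\[
\mu_{x,y}\colon\N x\times\N y\xrightarrow{\N\pi_1\times\N\pi_2}\N(x\times y)\times\N(x\times y)\xrightarrow{\ot_{x\times y}}\N(x\times y),\qquad\mu_0=I_{\N(I)}.
\]
These two assignments are mutually quasi-inverse thanks to the identities $\pi_i\circ\Delta_x=\mathrm{id}$ and $(\pi_1\times\pi_2)\circ\Delta_{x\times y}=\mathrm{id}_{x\times y}$, which hold precisely in the cartesian setting and respectively recover $\ot_x$ and $\mu_{x,y}$ when the two constructions are composed. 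The 2-functoriality on 1-cells (monoidal pseudonatural transformations versus pseudonaturals with strong monoidal components) and 2-cells (monoidal modifications versus fibrewise monoidal ones) is routine once the object-level correspondence is set up.

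The main obstacle I expect is the bookkeeping required to match the coherence axioms on the two sides of \cref{lem:helplemma}: in particular, translating the hexagon and unit axioms of the laxator $\mu$ into the pentagon and triangle axioms of $\ot_x$ via the comonoid coherences of $(\Delta_x,!_x)$. This translation relies systematically on the universal property of the cartesian product, which is the precise input that fails for a general monoidal $\X$; without it, the diagonals and projections do not exist, $\mu_{x,y}$ cannot be reconstructed from $\ot_{x\times y}$, and the two legs of \cref{clarifyingdiag} genuinely diverge, as emphasized in \cref{rmk:fixedbasefib,rmk:fixedbaseicat}.
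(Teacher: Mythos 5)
Your proposal is correct and follows essentially the same route as the paper: the result is obtained as the composite $\MonFib(\X)\simeq\MonTCat_\pse(\X^\op,\Cat)\simeq\TCat_\pse(\X^\op,\MonCat)$, with the first equivalence from \cref{cor:fixedbasemonoidalGr} and the second from \cref{lem:helplemma} (applied with $\A=\X^\op$, $\K=\1$, $\L=\Cat$ together with $\PsMon(\Cat)\simeq\MonTCat_\pse(\1,\Cat)$), exactly as in the proof of \cref{thm:fibrewise=global}. Your explicit diagonal/projection formulas for the two directions of the lemma are precisely the paper's \cref{eq:explicitstructure1} and its inverse as spelled out in \cref{monicat=imoncat}; the only cosmetic difference is that the paper states and proves \cref{lem:helplemma} in the greater generality of an arbitrary cocartesian 2-category $\A$ and monoidal 2-categories $\K,\L$, using the coproduct inclusions and codiagonal of $\A$, which specialize to your $\pi_i$ and $\Delta_x$.
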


Evidently, this result provides an equivalence between the two separate feet of \cref{clarifyingdiag}. Bringing all these structures together, we obtain the following.

\begin{thm}\label{thm:fibrewise=global}
    If $\X$ is a cartesian monoidal category, 
    \begin{displaymath}
    \begin{tikzcd}[ampersand replacement=\&,sep=.25in]
    \MonFib(\X)\ar[r,"\simeq"]\ar[d,"\simeq"'{anchor=south, rotate=90, inner sep=.5mm}]
    \&
    \MonTCat_\pse(\X^\op,\Cat)\ar[d,"\simeq"{anchor=south, rotate=270, inner sep=.5mm}] \\
    \PsMon(\Fib(\X))\ar[r,"\simeq"] \& \TCat_\pse(\X^\op,\MonCat)
    \end{tikzcd}
    \end{displaymath}
    Dually, if $\X$ is a cocartesian monoidal category,
    \begin{displaymath}
    \begin{tikzcd}[ampersand replacement=\&,sep=.25in]
    \MonOpFib(\X)\ar[r,"\simeq"]\ar[d,"\simeq"'{anchor=south, rotate=90, inner sep=.5mm}]
    \&
    \MonTCat_\pse(\X,\Cat)\ar[d,"\simeq"{anchor=south, rotate=270, inner sep=.5mm}] \\
    \PsMon(\OpFib(\X))\ar[r,"\simeq"] \& \TCat_\pse(\X,\MonCat)
    \end{tikzcd}
    \end{displaymath}
In the strict context, the restricted equivalences give a correspondence between monoid\-al split (op)fibrations over $\X$ and functors $\X^{(\op)}\to\MonCat_\mathrm{st}$.
\end{thm}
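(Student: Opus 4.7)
Three of the four equivalences in the square are either prior results or easy consequences: the top edge is \cref{cor:fixedbasemonoidalGr}, the bottom edge is \cref{thm:fibrmonGr}, and Shulman's \cref{thm:Shulman} already supplies the diagonal $\MonFib(\X)\simeq\TCat_\pse(\X^\op,\MonCat)$. The substantive new content is therefore the right vertical equivalence, after which the left vertical is obtained by composition and the square commutes by construction.

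I would extract the right vertical as a helping lemma: for any cocartesian monoidal category $(\Y,+,0)$, there is a 2-equivalence $\MonTCat_\pse(\Y,\Cat)\simeq\TCat_\pse(\Y,\MonCat)$, which specialises to the desired edge by taking $\Y=\X^\op$. In one direction, a pseudofunctor $\ps{M}\colon\Y\to\MonCat$ is sent to its underlying functor into $\Cat$ equipped with laxator $\mu_{x,y}(a,b):=\ps{M}\iota_1(a)\otimes_{\ps{M}(x+y)}\ps{M}\iota_2(b)$ built from the coproduct injections, and unitor $\mu_0$ picking the monoidal unit $I_{\ps{M}0}$. In the reverse direction, a lax monoidal $(\ps{M},\mu,\mu_0)\colon\Y\to\Cat$ is sent to the pseudofunctor with monoidal fibres $(\ps{M}x,\otimes_x,I_x)$ where $a\otimes_x b:=\ps{M}\nabla_x(\mu_{x,x}(a,b))$ and $I_x:=\ps{M}(!_x)(\mu_0)$, using the codiagonal $\nabla_x\colon x+x\to x$ and the unique map $!_x\colon 0\to x$; pseudonaturality of $\mu$ then forces each $\ps{M}f$ to be strong monoidal. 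Extending to 1-cells and 2-cells is largely formal: a monoidal pseudonatural transformation corresponds to a pseudonatural transformation whose components are themselves monoidal functors, with the component monoidal structure recovered by pulling back $u_{x,y}$ along the diagonal $\Delta_x$, and monoidal modifications match fibrewise monoidal modifications on the nose.

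The principal obstacle is coherence bookkeeping at the 2-categorical level, but everything is forced by the universal property of finite (co)products: the associator and unitor axioms for $\otimes_x$ are obtained by applying $\ps{M}$ to the diagrams witnessing that $x$ is a commutative comonoid in $(\X,\times,\1)$, equivalently a commutative monoid in $(\X^\op,+,0)$, which hold tautologically in the cartesian case. With the helping lemma in hand, the left vertical follows by composing \cref{cor:fixedbasemonoidalGr}, the right vertical, and the inverse of \cref{thm:fibrmonGr}, and commutativity of the square is automatic because all four equivalences are realised by reorganising the same underlying indexed data through the Grothendieck construction. The dual cocartesian statement is obtained by reversing arrows throughout, and the split version follows by running the identical argument with \cref{thm:mainthmsplit} in place of \cref{cor:fixedbasemonoidalGr}.
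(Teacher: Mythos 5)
Your proposal is correct and follows essentially the same route as the paper: the substantive content is isolated in the right vertical edge, established by a monoidality-transfer lemma whose constructions (laxator from coproduct injections one way, fibrewise tensor from the codiagonal $\nabla_x$ and $!_x$ the other way) are exactly those of the paper's \cref{lem:helplemma} and \cref{monicat=imoncat}, with the remaining edges supplied by \cref{cor:fixedbasemonoidalGr} and \cref{thm:fibrmonGr}. The only difference is presentational: the paper states the key lemma at the generality of arbitrary monoidal 2-categories $\K,\L$ and a cocartesian 2-category $\A$, then specializes via the chain $\TCat_\pse(\X^\op,\MonCat)\simeq\TCat_\pse(\X^\op,\MonTCat_\pse(\1,\Cat))\simeq\MonTCat_\pse(\X^\op\times\1,\Cat)$, whereas you state it directly in the form needed.
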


The original proof of \cref{thm:Shulman} is an explicit, piece-by-piece construction of an equivalence, and employs the reindexing functors $\Delta^*$ and $\pi^*$ induced by the diagonal and projections in order to move between the appropriate fibres and build the required structures. The global monoidal structure is therein called \emph{external} and the fibrewise \emph{internal}.

Here we present a different argument that does not focus on the fibrations side. The equivalence between lax monoidal pseudofunctors $\X^\op\to\Cat$ and ordinary pseudofunctors $\X^\op\to\MonCat$, which essentially provides a way of transferring the monoidal structure from the target category to the functor itself and vice versa, brings a new perspective on the behavior of such objects.

\begin{lem}\label{lem:helplemma}
For any two monoidal 2-categories $\K$ and $\L$, the following are true.
\begin{enumerate}
    \item For an arbitrary 2-category $\A$,
    \begin{equation}\label{eq:equiv1}
\TCat_\pse(\A,\MonTCat_\pse(\K,\L))\simeq\MonTCat_\pse(\K,\TCat_\pse(\A,\L))
\end{equation}
    \item For a cocartesian 2-category $\A$,
    \begin{equation} \label{eq:equiv2}
\TCat_\pse(\A,\MonTCat_\pse(\K,\L))\simeq\MonTCat_\pse(\A\times\K,\L)
    \end{equation}
\end{enumerate}
\end{lem}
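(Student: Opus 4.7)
The plan is to prove \cref{eq:equiv1} and \cref{eq:equiv2} in sequence: (i) is a Fubini-style transposition of the two arguments, and (ii) follows from (i) together with a currying argument that crucially uses the cocartesian structure on $\A$.

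For (i), unfold a pseudofunctor $\F\colon\A\to\MonTCat_\pse(\K,\L)$ into the data of, for each $a\in\A$, a lax monoidal pseudofunctor $\F_a\colon\K\to\L$ with laxator $\mu^a_{k,k'}$ and unitor $\mu^a_0$; for each $f\colon a\to a'$, a monoidal pseudonatural $\F(f)\colon\F_a\Rightarrow\F_{a'}$; plus coherence 2-cells for composition and identities. Transpose the two variables: set $\G\colon\K\to\TCat_\pse(\A,\L)$ with $(\G k)(a)=\F_a(k)$, where $\TCat_\pse(\A,\L)$ carries the pointwise monoidal structure from $\L$. Its laxator and unitor are assembled pointwise in $\A$ from the $\mu^a$ and $\mu^a_0$, while the monoidal pseudonaturality of each $\F(f)$ is exactly what makes the assembled laxator pseudonatural in $\K$ valued in monoidal transformations. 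The same transposition extends to 1-cells (monoidal pseudonatural transformations on one side become pseudonatural transformations of lax monoidal pseudofunctors on the other) and 2-cells (monoidal modifications go to modifications); every coherence axiom translates directly, and the inverse transposition $\G\mapsto\F$ is immediate.

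For (ii), apply (i) to rewrite the left-hand side as $\MonTCat_\pse(\K,\TCat_\pse(\A,\L))$, then produce a 2-equivalence
\[
    \MonTCat_\pse(\K,\TCat_\pse(\A,\L))\simeq\MonTCat_\pse(\A\times\K,\L).
\]
On underlying pseudofunctors this is cartesian closure of 2-categories, $\G\leftrightarrow \ps H$ with $\ps H(a,k)=(\G k)(a)$. The content lies in matching the monoidal data: $\G$'s laxator acts only on $k,k'$ (pointwise in $\A$), whereas $\ps H$'s must combine $(a,k)$ with $(a',k')$, and the cocartesian structure on $\A$ provides the missing glue. Going forward, define $\ps H$'s laxator by
\[
    \ps H(a,k)\otimes\ps H(a',k')\xrightarrow{\ps H(\iota_a,1)\otimes\ps H(\iota_{a'},1)}\ps H(a\sqcup a',k)\otimes\ps H(a\sqcup a',k')\xrightarrow{(\mu^\G_{k,k'})_{a\sqcup a'}}\ps H(a\sqcup a',k\otimes_\K k'),
\]
where $\iota_a,\iota_{a'}$ are the coproduct injections. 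Going backward, recover $\G$'s pointwise laxator at $a$ from $\ps H$'s by pre-composition with the codiagonal $\nabla_a\colon a\sqcup a\to a$:
\[
    \ps H(a,k)\otimes\ps H(a,k')\to\ps H(a\sqcup a,k\otimes_\K k')\xrightarrow{\ps H(\nabla_a,1)}\ps H(a,k\otimes_\K k').
\]
The unit is handled analogously using the unique map from the initial object of $\A$.

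The principal obstacle is verifying that these forward and backward constructions are mutually inverse and compatible with every coherence axiom. The decisive algebraic input is the universal property of the coproduct: the two injections $a\rightrightarrows a\sqcup a$ compose with $\nabla_a$ to the identity, and the interaction of $\iota,\nabla$ with the associators and unitors of $\sqcup$ match the corresponding coherences demanded by lax monoidal pseudofunctoriality. Under these identifications, each axiom imposed on $\ps H$ (associativity and unitality of the laxator, plus pseudonaturality) reduces to the corresponding axiom for the $\F_a$'s together with the naturality of $\iota$ and $\nabla$ for the $\A$-morphisms at play. Though the pasting diagrams are cumbersome, the verification is purely mechanical with no further conceptual content.
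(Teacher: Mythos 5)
Your proposal is correct and follows essentially the same route as the paper: part (i) is the same pointwise transposition with the pointwise monoidal structure on $\TCat_\pse(\A,\L)$, and part (ii) uses the same forward construction via the coproduct injections $\iota_a,\iota_{a'}$ followed by the laxator at $a\sqcup a'$, with the inverse recovered via the codiagonal $\nabla_a$ and the unique map out of the initial object. The only cosmetic difference is that you factor (ii) explicitly through (i), whereas the paper constructs the equivalence with $\MonTCat_\pse(\A\times\K,\L)$ directly from $\TCat_\pse(\A,\MonTCat_\pse(\K,\L))$; the underlying data and the appeal to the universal property of coproducts are identical.
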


\begin{proof}
First of all, recall \cite[1.34]{FibrationsinBicats} that there are equivalences
\begin{displaymath}
\TCat_\pse(\A,\TCat_\pse(\K,\L))\simeq
\TCat_\pse(\A\times\K,\L)\simeq
\TCat_\pse(\K,\TCat_\pse(\A,\L))
\end{displaymath}
which underlie \cref{eq:equiv1,eq:equiv2} for the respective pseudofunctors; so the only part needed is the correspondence between the respective monoidal structures. Notice that $\A\times\K$ is a monoidal 2-category since both $\A$ and $\K$ are, and also $\TCat_\pse(\A,\L)$ is monoidal since $\L$ is: define $\otimes_{[]}$ and $I_{[]}$ by $(\ps{F}\otimes_{[]}\ps{G})(a)=\ps{F}a\otimes_\L\ps{G}a$ (similarly to \cref{eq:icatxprod}) and $I_{[]}\colon\A\xrightarrow{!}\1\xrightarrow{I_\L}\L$.

$(1)$ Take a pseudofunctor $\ps{F}\colon\A\to\MonTCat_\pse(\K,\L)$. For every $a\in\A$, its image pseudofunctor $\ps{F}a$ is lax monoidal, i.e.\ comes equipped with morphisms in $\L$
\begin{equation}\label{eq:Faweakmon}
    \phi_{x,y}^a \colon (\ps{F} a) (x) \otimes_\L (\ps{F} a) (y) \to (\ps{F} a) (x \otimes_\K y), \quad \phi_0^a \colon I_\L \to (\ps{F} a) I_\K
\end{equation}
for every $x, y \in \K$, satisfying coherence axioms.

Now define the pseudofunctor $\bar{\ps{F}} \colon \K \to \TCat_\pse(\A, \L)$, with $(\bar{\ps{F}} x) (a) := (\ps{F}a) (x)$. It has a lax monoidal structure, given by pseudonatural transformations
\begin{displaymath}
    \bar{\ps{F}} x \otimes_{[]} \bar{\ps{F}} y \Rightarrow \bar{\ps{F}} (x \otimes_\K y), \quad I_{[]} \Rightarrow\bar{\ps{F}}(I_\K) 
\end{displaymath}
whose components evaluated on some $a\in\A$ are defined to be \cref{eq:Faweakmon}. Pseudonaturality and lax monoidal axioms follow, and in a similar way we can establish the opposite direction and verify the equivalence.

$(2)$ If $\A$ is a cocartesian monoidal 2-category, a lax monoidal pseudofunctor $\ps{F}\colon\A\to\MonTCat_\pse(\K,\L)$ induces a pseudofunctor $\tilde{\ps{F}}\colon\A\times\K\to\L$ by $\tilde{\ps{F}}(a,x):=(\ps{F}a)(x)$. Its lax monoidal structure is given by the composite
\begin{displaymath}
\begin{tikzcd}[row sep=.1in,column sep=.2in]
    \tilde{\ps{F}}(a,x) \otimes_\L \tilde{\ps{F}}(b,y)
    \ar[d,equal]
    \ar[rr,dashed,"\psi_{(a,x),(b,y)}"] 
    && 
    \tilde{\ps{F}}(a + b, x \otimes_\K y)
    \ar[d,equal] 
    \\
    (\ps{F}a)(x) \otimes_\L (\ps{F}b)(y)
    \ar[rdd, "{(\ps{F} \iota_a)_x \otimes (\ps{F}\iota_b)_y}"'] 
    && 
    (\ps{F}(a+b))(x \otimes_\K y) 
    \\ \hole \\& 
    (\ps{F}(a+b))(x) \otimes_\L(\ps{F}(a+b))(y)
    \ar[uur,"\phi^{a+b}_{x,y}"'] 
    & 
\end{tikzcd}
\end{displaymath}
where $a\xrightarrow{\iota_a}a+b\xleftarrow{\iota_b}b$ are the inclusions, and $\psi_0\colon I_\L\xrightarrow{\phi_0^0}\tilde{\ps{F}}(0,I_\K)$; the respective axioms follow.

In the opposite direction, starting with some pseudofunctor $\ps{G}\colon\A\times\K\to\L$ equipped with a lax monoidal structure $\psi_{(a,x),(b,y)}$ and $\psi_0$, we can build $\hat{\ps{G}}\colon\A\to\MonTCat_\pse(\K,\L)$ for which every $\hat{\ps{G}}a$ is a lax monoidal pseudofunctor, via
\begin{gather*}
\begin{tikzcd}[row sep=.1in,column sep=.5in,ampersand replacement=\&]
    (\hat{\ps{G}}a)(x)\otimes_\L(\hat{\ps{G}}b)(y)
    \ar[d,equal]\ar[rr,dashed,"\phi^a_{(x,y)}"] \&\& (\hat{\ps{G}}a)(x\otimes_\K y)\ar[d, equal] \\
    \ps{G}(a, x) \otimes_\L \ps{G}(a, y)
    \ar[r,"{\psi_{(a, x), (a, y)}}"'] \&\ps{G}(a + a, x \otimes_\K y)
    \ar[r,"{G(\nabla,1)}"'] \& \ps{G}(a,x\otimes_\K y)
\end{tikzcd}\\
\phi_0^a \colon I_\L \xrightarrow{\psi_0} G(0,I_\K) \xrightarrow{G(!,1)} G(a, I_\K)
\end{gather*}
The equivalence follows, using the universal properties of coproducts and initial object.
\end{proof} 

\begin{proof}[Proof of \cref{thm:fibrewise=global}]
The top and bottom right 2-categories of the first square are equivalent as follows, where $\X^\op$ is cocartesian.
\begin{align*}
    \TCat_\pse(\X^\op,\MonCat)
    & \simeq \TCat_\pse (\X^\op, \PsMon(\Cat)) & \cref{eq:PsMon}\\
    & \simeq \TCat_\pse (\X^\op, \MonTCat_\pse(\1, \Cat)) & \cref{eq:equiv2} \\
    & \simeq \MonTCat_\pse (\X^\op \times \1, \Cat)  \\
    & \simeq \MonTCat_\pse(\X^\op, \Cat)
\end{align*}
The strict context equivalence can be explicitly verified as a special case of the above, where the corresponding 1-cells and 2-cells are as described in \cref{sec:monfib,sec:monicat}.
\end{proof}
The decisive step in the above proof is
the much broader \cref{lem:helplemma}; for a grounded explanation of the correspondence of the relevant structures, see \cref{monicat=imoncat}.
In simpler words, a lax monoidal structure of a pseudofunctor $F\colon(\A,+,0)\to(\Cat,\times,\1)$ gives a pseudofunctor $F\colon\A\to\MonCat$ and vice versa: in a sense, `monoidality' can move between the functor and its target.

As another corollary of \cref{lem:helplemma}, we can formally deduce that pseudomonoids in $(\ICat(\X),\boxtimes,\Delta\1)$ are functors into $\MonCat$, as described at the end of \cref{sec:monicat}.

\begin{prop}\label{prop:imoncat_formally}
    For any $\X$, $\PsMon(\ICat(\X))\simeq\TCat_\pse(\X^\op,\MonCat)$.
\end{prop}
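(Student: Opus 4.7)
The plan is to deduce this purely formally by chaining together the abstract results already available in the paper, without ever having to manipulate multiplications, units, or coherence 2-cells directly. The key ingredients are the description of the 2-category of indexed categories as a functor 2-category $\ICat(\X) = \TCat_\pse(\X^\op, \Cat)$, the identification $\PsMon(-)\simeq\MonTCat(\1,-)$ from \cref{eq:PsMon}, and part (1) of \cref{lem:helplemma}, which allows a monoidal structure to ``commute past'' a functor 2-category.

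Concretely, I would assemble the following chain of 2-equivalences, reading each step from left to right:
\begin{align*}
\PsMon(\ICat(\X)) &= \PsMon\bigl(\TCat_\pse(\X^\op,\Cat)\bigr) \\
 &\simeq \MonTCat_\pse\bigl(\1,\, \TCat_\pse(\X^\op,\Cat)\bigr) && \text{by \cref{eq:PsMon}}\\
 &\simeq \TCat_\pse\bigl(\X^\op,\, \MonTCat_\pse(\1,\Cat)\bigr) && \text{by \cref{eq:equiv1} with $\A=\X^\op$, $\K=\1$, $\L=\Cat$}\\
 &\simeq \TCat_\pse\bigl(\X^\op,\, \PsMon(\Cat)\bigr) && \text{by \cref{eq:PsMon} again}\\
 &\simeq \TCat_\pse(\X^\op,\MonCat)
\end{align*}
The last step uses the identification of pseudomonoids in $(\Cat,\times,\1)$ with monoidal categories, strong morphisms, and monoidal transformations, as in the paradigmatic example noted just before \cref{rem:pseudomonas2functors}.

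Each step is an instance of a 2-equivalence already established in the preliminaries, so the argument is essentially bookkeeping: one only needs to check that the monoidal 2-category structures line up, in particular that the cartesian monoidal structure on $\ICat(\X)=\TCat_\pse(\X^\op,\Cat)$ used to define the left-hand $\PsMon$ matches the pointwise monoidal structure on $\TCat_\pse(\A,\L)$ used in \cref{lem:helplemma} when $\L=\Cat$ is cartesian monoidal; this is immediate from the definition of $\boxtimes$ in \cref{eq:icatxprod}. I do not expect any real obstacle, since the content of the proposition has already been extracted into \cref{lem:helplemma}; the main thing to get right is simply the choice of parameters $(\A,\K,\L)=(\X^\op,\1,\Cat)$ in part (1) of that lemma (not part (2), so no cocartesian hypothesis on $\X$ is required, which is exactly why the statement holds for an arbitrary $\X$). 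This also reproduces from a high-level viewpoint the direct description given at the end of \cref{sec:monicat}, where a pseudomonoid in $\ICat(\X)$ was unpacked by hand as a pseudofunctor into $\MonCat$.
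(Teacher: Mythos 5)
Your proposal is correct and follows essentially the same route as the paper: the paper's proof is exactly the chain $\PsMon(\ICat(\X)) \simeq \MonTCat_\pse(\1,\TCat_\pse(\X^\op,\Cat)) \simeq \TCat_\pse(\X^\op,\MonTCat_\pse(\1,\Cat)) \simeq \TCat_\pse(\X^\op,\MonCat)$, invoking \cref{eq:PsMon} and part (1) of \cref{lem:helplemma} with the same parameters. Your explicit check that the cartesian structure $\boxtimes$ on $\ICat(\X)$ agrees with the pointwise structure used in \cref{lem:helplemma} is a worthwhile addition that the paper leaves implicit.
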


\begin{proof} There are equivalences
\begin{align*}
    \PsMon(\ICat(\X))
    & = \PsMon(\TCat_\pse(\X^\op,\Cat)) \\
    & \simeq \MonTCat_\pse (\1,\TCat_\pse(\X^\op,\Cat)) & \cref{eq:equiv1}\\
    & \simeq \TCat_\pse(\X^\op,\MonTCat_\pse(\1,\Cat)) & \cref{eq:PsMon} \\
    & \simeq
    \TCat_\pse(\X^\op,\PsMon(\Cat))\\
    & \simeq
    \TCat_\pse(\X^\op,\MonCat) \qedhere
\end{align*}
\end{proof}

As a first and meaningful example of \cref{thm:fibrewise=global}, recall by Remarks \ref{rem:Fibisfibred} and \ref{rem:ICatisfibred} that the categories $\Fib$ and $\ICat$ are themselves fibred over $\Cat$, with fibres $\Fib(\X)$ and $\ICat(\X)$ respectively. The base category in both cases is the cartesian monoidal category $(\Cat,\times,1)$, therefore \cref{thm:fibrewise=global} applies. The following proposition shows that the monoidal structures of $\Fib$, $\ICat$ and $\Fib(\X)$, $\ICat(\X)$, instrumental for the study of global and fibrewise monoidal structures, follow the very same abstract pattern.  

\begin{prop}\label{prop:FibICatglobalfibrewise}
    The fibrations $\Fib\to\Cat$ and $\ICat\to\Cat$ are monoidal, and moreover their fibres $\Fib(\X)$ and $\ICat(\X)$ are monoidal and the reindexing functors are strong monoidal.
\end{prop}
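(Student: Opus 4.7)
The strategy is to invoke \cref{thm:fibrewise=global} with $\X=\Cat$: since $\Cat$ is cartesian monoidal, the two halves of the proposition (that $\Fib\to\Cat$ and $\ICat\to\Cat$ are monoidal fibrations, and that their fibres are fibrewise monoidal with strong monoidal reindexing) are equivalent, so it suffices to verify either side. The fibrewise description is the more elementary one, and I will construct it directly.

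For every category $\X$, the 2-category $\Fib(\X)$ carries the cartesian monoidal structure $(\boxtimes,1_\X)$ inherited from $\Cat/\X$ as in \cref{Fib_X_cart}. Given $F\colon\Y\to\X$, the reindexing functor $F^*\colon\Fib(\X)\to\Fib(\Y)$ is pullback along $F$; the pasting law for pullbacks produces canonical isomorphisms $F^*(P\boxtimes Q)\cong F^*P\boxtimes F^*Q$ together with $F^*(1_\X)=1_\Y$, so $F^*$ is strong monoidal. Compatibility with the pseudofunctorial coherences of reindexing is immediate from universal properties, so $\X\mapsto\Fib(\X)$ assembles into a pseudofunctor $\Cat^\op\to\MonCat$. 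The same argument applies verbatim to $\ICat(\X)$ with its pointwise cartesian product \cref{eq:icatxprod}, since precomposition with $F^\op$ commutes with the pointwise cartesian product in $\Cat$. By \cref{thm:fibrmonGr}, these pseudofunctors correspond precisely to pseudomonoid structures in $\Fib(\X)$ and $\ICat(\X)$, which is the fibrewise half of the claim.

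Now applying \cref{thm:fibrewise=global} with $\X=\Cat$, each of these fibrewise pictures corresponds to a monoidal fibration over $\Cat$. By the induced formula \cref{eq:globalmonstr}, the global tensor of such fibrations is $(\X,P)\otimes(\Y,Q)=(\X\times\Y,\,\pi_\X^*P\boxtimes\pi_\Y^*Q)$, which is canonically isomorphic to the cartesian product $P\times Q$ of \cref{Fib_cart} (respectively to the pointwise product $\M\otimes\N$ of \cref{ICat_cart} in the indexed case) by distributivity of pullbacks over products in $\Cat$. Hence the monoidal fibration structure obtained on $\Fib\to\Cat$ and $\ICat\to\Cat$ is exactly the one given by the standard cartesian product on the total 2-categories.

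The main subtlety is this last identification: showing that the global tensor induced via the monoidal Grothendieck construction agrees with the already-given cartesian products $\times$ on $\Fib$ and $\ICat$. This is not a hard computation but rather a coherence check at the level of universal properties of products and pullbacks, and it is the only step that requires care beyond formal invocation of the preceding theorems.
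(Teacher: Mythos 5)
Your proof is correct, but it runs the argument in the opposite direction from the paper. The paper starts on the \emph{global} side: it exhibits the pseudofunctors $\X\mapsto\Fib(\X)$, $F\mapsto F^*$ and $\X\mapsto\ICat(\X)$, $F\mapsto(-\circ F^\op)$ into $\mathsf{CAT}$, asserts that these are lax monoidal with laxators given precisely by the product formulas \cref{Fib_cart} and \cref{ICat_cart} (so the monoidal fibration structure on $\Fib\to\Cat$ and $\ICat\to\Cat$ is the standard cartesian one by fiat), and then invokes \cref{thm:fibrewise=global} to deduce the fibrewise statement. You instead verify the \emph{fibrewise} side directly --- strong monoidality of $F^*$ via the pasting law for pullbacks, and of $-\circ F^\op$ via pointwise products commuting with precomposition --- and then pass back through \cref{thm:fibrewise=global} to obtain the global structure, at the cost of one extra step: identifying the induced global tensor $\pi_\X^*P\boxtimes\pi_\Y^*Q$ with the standard product $P\times Q$ of \cref{Fib_cart}. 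That identification is correct ($\A\times_\X(\X\times\Y)\cong\A\times\Y$, and the fibre product of $\A\times\Y$ and $\X\times\B$ over $\X\times\Y$ is $\A\times\B$), and you rightly flag it as the only point needing care. The trade-off is roughly even: the paper's direction avoids your final identification but silently absorbs the pseudonaturality of the laxator (compatibility of $\times$ with reindexing), which is exactly the pasting-law content you make explicit. One small wording slip: the pseudofunctors $\Cat^\op\to\MonCat$ correspond to a pseudomonoid structure on the object $\Fib\to\Cat$ of $\Fib(\Cat)$ (equivalently, monoidal structures \emph{on} the fibres $\Fib(\X)$ with strong monoidal reindexing), not to ``pseudomonoid structures in $\Fib(\X)$''; and both your argument and the paper's gloss over the size enlargement from $\Cat$ to $\mathsf{CAT}$ and the fact that $\Fib\to\Cat$ is really a 2-fibration, so you are no worse off there.
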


\begin{proof}
The pseudofunctors inducing $\Fib\to\Cat$ and $\ICat\to\Cat$ are
\begin{displaymath}
\begin{tikzcd}[row sep=.05in]
\Cat^\op\ar[rr] && \mathsf{CAT} && \Cat^\op\ar[rr] && \mathsf{CAT} \\
\X\ar[mapsto,rr]\ar[dd,"F"'] && \Fib(\X) && \X\ar[mapsto,rr]\ar[dd,"F"'] && \ICat(\X) \\
\hole \\
\Y\ar[mapsto,rr] && \Fib(\Y)\ar[uu,"F^*"'] &&
\Y\ar[mapsto,rr] && \ICat(\Y)\ar[uu,"-\circ F^\op"']
\end{tikzcd}
\end{displaymath}
where $\mathsf{CAT}$ is the 2-category of possibly large categories, $F^*$ takes pullbacks along $F$ and $-\circ F^\op$ precomposes with the opposite of $F$. These are both lax monoidal, with the respective structures essentially being \cref{Fib_cart} and \cref{ICat_cart} giving the global monoidal structure on the fibrations.

Since the base of both monoidal fibrations is cartesian, the global monoidal structure is equivalent to a fibrewise monoidal structure, as per the theme of this whole section. The induced monoidal structure on each $\Fib(\X)$ is given by \cref{Fib_X_cart} and on each $\ICat(\X)$ by \cref{eq:icatxprod}, and $F^*$, $-\circ F^\op$ are strong monoidal functors accordingly. 
\end{proof}

The above essentially lifts the global and fibrewise monoidal structure development one level up, exhibiting fibrations and indexed categories as examples of the monoidal Grothendieck construction themselves.

Concluding this investigation on monoidal structures of fibrations and indexed categories, we consider the (co)cartesian monoidal (op)fibration case; for example, a monoidal fibration $P\colon(\A,\times,1)\to(\X,\times,1)$ as in \cref{def:monoidal_fibration} where $P$ preserves products (or coproducts for opfibrations) on the nose. As remarked in \cite[12.9]{FramedBicats}, the equivalence \cref{eq:Shulmanequiv} restricts to one between pseudofunctors which land in cartesian monoidal categories, and monoidal fibrations where the total category is cartesian monoidal. With the appropriate 1-cells and 2-cells that preserve the structure, we can write the respective equivalences as
\begin{gather}
    \TCat_\pse(\X^\op, \cMonCat) \simeq \cMonFib(\X) \textrm{ for cartesian $\X$}\label{eq:cocartspecialcase} \\
    \TCat_\pse(\X, \cocMonCat) \simeq \cocMonOpFib(\X) \textrm{ for cocartesian $\X$}\nonumber
\end{gather}
where the prefixes $\mathsf{c}$ and $\mathsf{coc}$ correspond to the respective (co)cartesian structures.
Explicitly, in order for the total category to specifically be endowed with (co)carte\-sian monoidal structure, it is required not only that the base category is but also the fibres are and the reindexing functors preserve finite (co)products.

\begin{rmk}
    This special case of the monoidal Grothendieck construction that connects the existence of (co)products and initial/terminal object in the fibres and in the total category, is reminiscent (and also an example of) the general theory of \emph{fibred limits} originated from \cite{Grayfibredandcofibred}. Explicitly, \cite[Cor.~4.9]{hermida1999some} deduces that if the base of a fibration $P\colon\A\to\X$ has $\J$-limits for any small category $\J$, then the fibres have and the reindexing functors preserve $\J$-limits if and only if $\A$ has $\J$-limits and $P$ strictly preserves them, and dually for opfibrations and colimits. 
    Hence for finite (co)products in (op)fibrations, \cref{eq:cocartspecialcase} re-discovers that result using the monoidal Grothendieck correspondence. 
\end{rmk}

Moreover, since the squares of \cref{thm:fibrewise=global} reduce to their (co)cartesian variants, we would like to identify the conditions that the corresponding lax monoidal pseudofunctor into $\Cat$ needs to satisfy in order to give rise to a (co)cartesian monoidal (op)fibration.
Recall that by a folklore result presented in \cite{HeunenVicary12}, any symmetric monoidal category equipped with suitably well-behaved diagonals and augmentations must in fact be cartesian monoidal. We employ its dual version to tackle the opfibration case:
if, in a symmetric monoidal category $\X$, there exist monoidal natural transformations with components
\begin{displaymath}
\nabla_x \maps x \otimes x \to x,\quad
u_x \maps I \to x
\end{displaymath}
satisfying the commutativity of
\begin{equation}\label{eq:nabla}
\begin{tikzcd}
I\ot x\ar[r,"u_x\ot1"]\ar[dr,"\sim"{rotate=-30},"\ell_x"'] & x\ot x\ar[d,"\nabla_x"] & x\ot I\ar[dr,"\sim"{rotate=-30},"r_x"']\ar[r,"1\ot u_x"] & x\ot x\ar[d,"\nabla_x"] \\
& x&& x
\end{tikzcd}
\end{equation}
then $\X$ is cocartesian monoidal. In fact, it is the case that a symmetric monoidal category is cocartesian if and only if $\Mon(\X)\cong\X$.

Suppose $(\M,\mu,\mu_0) \maps \X\to\Cat$ is a (symmetric) lax monoidal pseudofunctor, such that the corresponding Grothendieck category $(\inta \M, \otimes_\mu,I_\mu)$ described in \cref{sec:monequiv} is cocartesian monoidal. This means there are monoidal natural transformations with components
\begin{equation*}
    \nabla_{(x, a)} 
    \maps (x, a) \otimes_{\mu} (x, a) 
    \to (x, a)
    \quad\textrm{and}\quad  u_{(x, a)} 
    \maps (I,\mu_0(*)) 
    \to (x, a)
\end{equation*}
making the diagrams \cref{eq:nabla} commute.
Explicitly, by \cref{eq:globalmonstr}, $\nabla_{(x, a)}$ consists of morphisms
$f_x \maps x \otimes x \to x$ in $\X$ and
$\kappa_a\maps (\M f_x)(\mu_{x,x}(a,a)) \to a$ in $\M x$,
whereas $u_{(x, a)}$ consists of 
$i_x \maps I \to x$ in $\X$ and
$\lambda_a \maps (\M i_x)\mu_0 \to a$
in $\M x$.

The conditions \cref{eq:nabla} say that the composites
\[
    (I, \mu_0) \otimes_\mu (x, a) \xrightarrow{u_{(x,a)} \otimes_\mu 1_{(x,a)}} (x, a) \otimes_\mu (x, a) \xrightarrow{\nabla_{(x, a)}} (x, a)
\]
\[
    (x, a) \otimes_\mu (I, \mu_0) \xrightarrow{1_{(x,a)} \otimes_\mu u_{(x,a)}} (x, a) \otimes_\mu (x, a) \xrightarrow{\nabla_{(x, a)}} (x, a)
\]
are equal to the left and right unitor on $x$, where all respective structures are detailed in \cref{sec:monoidal}.
Using the composition inside $\inta \M$ analogously to \cref{eq:comp_intM}, these conditions translate, on the one hand, to the base being cocartesian monoidal $(\X,+,0)$ with $f_x=\nabla_x$ and $i_x=u_x$. On the other hand, $\kappa_a$ and $\lambda_a$ form natural transformations 
\begin{equation}\label{kappalambda}
\begin{tikzcd}[row sep=.1in,column sep=.2in]
& \M x \times \M x \ar[r, "\mu_{x,x}"] & 
\M (x+x)\ar[dr, "\M(\nabla_x)"] & \\
\M x \ar[ur, "\Delta"] \ar[rrr, bend right=20, "1"'] \ar[rrr, phantom, "\Downarrow {\scriptstyle \kappa^x}"] &&& \M x
\end{tikzcd}\quad
\begin{tikzcd}[row sep=.1in,column sep=.2in]
& \1\ar[r,"\mu_0"] & \M(0)\ar[dr,"\M(u_x)"] & \\ 
\M x\ar[ur,"!"]\ar[rrr,bend right=20,"1"']\ar[rrr,phantom,"\Downarrow{\scriptstyle \lambda^x}"] &&& \M x
\end{tikzcd}
\end{equation}
satisfying the commutativity of
\begin{displaymath}
\begin{tikzcd}[column sep=.2in,row sep=.2in]
\M(\nabla_x\circ(u_x+1))(\mu_{0,x}(\mu_0(*),a))\ar[ddd,"\mathrm{id}"']\ar[rr,"\sim"',"\delta"] && (\M(\nabla_x)\circ\M(u_x+1))((\mu_{0,x}(\mu_0(*),a))\ar[d,"\sim"{anchor=south, rotate=90, inner sep=.5mm},"{\M(\nabla_x)(\mu_{u_x,1})}"] \\
& & \M(\nabla_x)(\mu_{x,x}(\M(u_x)(\mu_0(*),a)))\ar[d,"{\M(\nabla_x)\left(\mu_{x,x}(\lambda^x_a,\gamma)\right)}"] \\
&& \M(\nabla_x)(\mu_{x,x}(a,a))\ar[d,"{\kappa^x_a}"] \\
\M(\ell_x)(\mu_{0,x}(\mu_0(*),a))\ar[rr,"\xi","\sim"'] && a
\end{tikzcd}
\end{displaymath}
and a similar one with $\mu_0$ on second arguments. The above greatly simplifies if $\M$ is just a lax monoidal functor:
the first condition becomes $1_a \cong \kappa^x_a \circ \M(\nabla_x) (\mu_{x, x} (\lambda_a^x, 1))$, and the second one $1_a \cong \kappa^x_a \circ \M(\nabla_x) (\mu_{x, x} (1_a, \lambda_a^x))$.

\begin{cor}\label{cor:kappalambda}
    A lax monoidal pseudofunctor $\M\colon(\X,+,0)\to(\Cat,\times,\1)$ equip\-ped with natural transformations $\kappa$ and $\lambda$ as in \cref{kappalambda} corresponds to an ordinary pseudofunctor $\M\colon\X\to\cocMonCat$, or equivalently \cref{eq:cocartspecialcase} to a cocartesian monoidal opfibration.
\end{cor}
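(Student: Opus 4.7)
My plan is to chain together two equivalences. The second one---pseudofunctors $\M\colon\X\to\cocMonCat$ versus cocartesian monoidal opfibrations over $\X$---is immediate from the dual of \cref{eq:cocartspecialcase}. So the substance of the proof is to match the data $(\M,\mu,\mu_0,\kappa,\lambda)$ with a pseudofunctor $\tilde{\M}\colon\X\to\cocMonCat$, in both directions.

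Starting from a symmetric lax monoidal pseudofunctor $(\M,\mu,\mu_0)\colon(\X,+,0)\to(\Cat,\times,\1)$, \cref{thm:fibrewise=global} (applied to opfibrations over a cocartesian base) already produces an underlying pseudofunctor $\tilde{\M}\colon\X\to\MonCat$. Following the explicit formula in the proof of \cref{lem:helplemma}(2), the monoidal structure on each fiber is given by
\[
\otimes_x \;=\; \M(\nabla_x)\circ\mu_{x,x}\colon \M x\times\M x\to \M x,\qquad I_x=\M(u_x)(\mu_0(*)),
\]
with symmetry inherited from the cocartesian symmetry of $(\X,+,0)$ combined with the symmetric lax structure of $\M$. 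To refine $\tilde{\M}$ to a pseudofunctor that lands in $\cocMonCat$, we need each $\tilde{\M}(x)$ to be cocartesian monoidal. By the dualized Heunen--Vicary criterion recalled in the preceding paragraphs, this is equivalent to the existence of monoidal natural transformations $\nabla^x\colon \otimes_x\circ\Delta\Rightarrow 1_{\M x}$ and $u^x\colon I_x\Rightarrow 1_{\M x}$ satisfying the analogues of \cref{eq:nabla} in $\M x$.

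By direct inspection, the required components $\nabla^x_a\colon \M(\nabla_x)(\mu_{x,x}(a,a))\to a$ and $u^x_a\colon \M(u_x)\mu_0\to a$ are exactly the components of the transformations $\kappa^x$ and $\lambda^x$ displayed in \cref{kappalambda}. The two large coherence diagrams shown immediately before the corollary statement are nothing but the unit axiom \cref{eq:nabla} transported through the composition rule \cref{eq:comp_intM} of $\int\M$, so they are equivalent to asking that $\kappa$ and $\lambda$ be monoidal natural transformations in the Heunen--Vicary sense. Conversely, any pseudofunctor $\tilde{\M}\colon\X\to\cocMonCat$ equips each fiber with cocartesian diagonals and augmentations whose $\X$-pseudonaturality (encoded in the fact that reindexing functors preserve the cocartesian structure) delivers $\kappa$ and $\lambda$ of the stated shape. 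The main technical point---and the only nontrivial one---is checking that the monoidality condition in Heunen--Vicary exactly matches the two displayed coherences above the corollary after the substitution $\otimes_x=\M(\nabla_x)\circ\mu_{x,x}$; this is a routine unfolding that uses only the pseudofunctoriality isomorphism $\delta$ of $\M$ and the lax monoidal coherences of $\mu$, and requires no new ideas beyond those already present in the proof of \cref{thm:fibrewise=global}.
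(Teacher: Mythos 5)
Your argument is correct, but it is organized differently from the paper's. The paper applies the (dualized) Heunen--Vicary criterion to the \emph{total category} $(\inta\M,\otimes_\mu,I_\mu)$: it posits monoidal natural transformations $\nabla_{(x,a)}$ and $u_{(x,a)}$ on $\inta\M$ satisfying \cref{eq:nabla}, and then decomposes each such morphism via \cref{eq:globalmonstr} into a base component (forcing $f_x=\nabla_x$, $i_x=u_x$, i.e.\ cocartesianness of $\X$) and a fibre component, which is precisely $\kappa^x_a$, $\lambda^x_a$; the two large displayed diagrams are literally the unfolding of \cref{eq:nabla} in $\inta\M$ through the composition rule \cref{eq:comp_intM}. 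You instead pass first through \cref{thm:fibrewise=global} to get $\tilde\M\colon\X\to\MonCat$ with $\otimes_x=\M(\nabla_x)\circ\mu_{x,x}$ and apply Heunen--Vicary \emph{fibrewise} in each $\M x$, identifying $\kappa^x,\lambda^x$ as the fibrewise codiagonal and augmentation. The two routes meet because, over a cocartesian base, the total category is cocartesian exactly when the fibres are and the reindexing functors preserve finite coproducts (the content of \cref{eq:cocartspecialcase}, and automatic for strong monoidal functors between cocartesian monoidal categories) --- a point you rely on implicitly and could state explicitly. Your route gets to the fibres more directly; the paper's route explains where the specific displayed coherence diagrams come from. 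One small imprecision: those diagrams encode the \emph{unit} axioms \cref{eq:nabla}, not the monoidality of $\kappa$ and $\lambda$; the requirement that the codiagonal and augmentation be monoidal natural transformations is a separate clause of the Heunen--Vicary hypothesis which your write-up (like the paper's) leaves folded into the phrase ``monoidal natural transformations satisfying \cref{eq:nabla}'' rather than unwinding --- worth a sentence acknowledging it, but not a gap relative to the paper's own level of detail.
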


\section{Examples}\label{sec:applications}

In this section, we explore certain settings where the equivalence between monoidal fibrations and monoidal indexed categories naturally arises. Instead of going into details that would result in a much longer text, we mostly sketch the appropriate example cases up to the point of exhibition of the monoidal Grothendieck correspondence, providing indications of further work and references for the interested reader.

\subsection{Fundamental (bi)fibration}\label{sec:fundamentalfib}

For any category $\X$, the \emph{codomain} or \emph{fundamental} opfibration is the usual functor from its arrow category
\[\cod \colon \X^2 \longrightarrow \X\] mapping every morphism to its codomain and every commutative square to its right-hand side leg. It uniquely corresponds to the strict opindexed category, i.e.\ mere functor
\begin{equation}\label{eq:fundamentalindexedcat}
\begin{tikzcd}[row sep=.05in]
    \X\ar[r] & \Cat \\
    x\ar[r,mapsto]\ar[dd,"f"'] &     \X/x\ar[dd,"f_!"] \\
    \hole \\
    y\ar[mapsto,r] & \X/y
\end{tikzcd}
\end{equation}
that maps an object to the slice category over it and a morphism to the post-composition functor $f_!=f\circ-$ induced by it.

If the category has a monoidal structure $(\X,\otimes,I)$, this (2-)functor naturally becomes weakly lax monoidal with structure maps
\begin{equation}\label{eq:slicelaxator}
\X/x\times\X/y\xrightarrow{\otimes}\X/(x\otimes y), \quad \1\xrightarrow{1_I}\X/I.
\end{equation}
These components form strictly natural transformations, and for example the invertible modification $\omega$ \cref{eq:omega} has components the evident isomorphisms, for $(f,g,h)\in\X/x\times\X/y\times\X/z$, between 
\begin{align}
a \ot (b \ot c)& \xrightarrow{f \ot (g \ot h)} x \ot (y \ot z) \cong (x \ot y) \ot z \label{eq:pseudoassociativity}\\
(a \ot b) \ot c & \xrightarrow{(f \ot g) \ot h} (x \ot y) \ot z\nonumber
\end{align}
By \cref{thm:mainthmsplit}, this monoidal strict opindexed category correspondes to a monoidal split fibration, i.e.
$(\X^\2, \otimes, 1_I)$ is monoidal and $\cod$ strict monoidal, where $\ot_{\X^\2}$ strictly preserves cartesian liftings via $f_!k \ot g_! \ell = (f \ot g)_! (k \ot \ell)$ -- which can of course be independently verified. However in general, the slice categories $\X/x$ do not inherit the monoidal structure: there is no way to restrict the global monoidal structure to a fibrewise one.

According to \cref{thm:fibrewise=global}, there is an induced monoidal structure on the categories $\X/x$ and a strict monoidal structure on all $f_!$ only when the monoidal structure on $\X$ is given by binary coproducts and an initial object (i.e.\ cocartesian).
In that case, for each $k\colon a\to x$ and $\ell\colon b\to x$ in the same fibre $\X/x$, their tensor product in $\X/x$ is given by
\begin{displaymath}
a+b\xrightarrow{\;k+\ell\;}x+x\xrightarrow{\nabla_x}x
\end{displaymath}
as a simple example of \cref{eq:explicitstructure1}.
In fact, this is precisely the coproduct of two objects in $\X/x$, and $0\xrightarrow{!}x$ the initial object, due to the way colimits in the slice categories are constructed. Therefore this falls under the cocartesian-fibres special case \cref{eq:cocartspecialcase}, bijectively corresponding to the cocartesian structure on $\X^\2$ inherited from $\X$.

Now suppose an ordinary category $\X$ has pullbacks. This endows the codomain functor also with a fibration structure, corresponding to the indexed category
\begin{displaymath}
\begin{tikzcd}[row sep=.05in]
    \X^\op\ar[r] & \Cat \\
    x\ar[r,mapsto]\ar[dd,"f"'] & \X/x\\
    \hole \\
    y\ar[mapsto,r] & \X/y\ar[uu,"f^*"']
\end{tikzcd}
\end{displaymath}
with the same mapping on objects as \cref{eq:fundamentalindexedcat} but by taking pullbacks rather than post-composing along morphisms, a pseudofunctorial assignment. This gives $\cod\colon\X^2\to\X$ a bifibration structure, also by that classic fact that $f_!\dashv f^*$.

In this case, if $\X$ has a general monoidal structure, there is no naturally induced lax monoidal structure of that pseudofunctor as before: there is no reason for the pullback of a tensor to be isomorphic to the tensor of two pullbacks.  
However, if $\X$ is cartesian monoidal (hence has all finite limits), the components
\begin{displaymath}
\X/x\times\X/y\xrightarrow{\times}\X/(x\times y),\qquad \1\xrightarrow{\Delta_!}\X/1
\end{displaymath}
are pseudonatural since pullbacks commute with products. Moreover, this bijectively corresponds to monoidal fibres and strong monoidal reindexing functors, in fact also cartesian ones: for morphisms $k\colon a\to x$ and $\ell\colon b\to x$ in $\X/x$, their induced product is given by
\begin{displaymath}
\begin{tikzcd}[sep=.5in]
\bullet\ar[r]\ar[d,"\delta^*(k\times\ell)"']\ar[dr, phantom, very near start, "\lrcorner"] & a\times b\ar[d,"k\times\ell"] \\
x\ar[r,"\delta"] & x\times x
\end{tikzcd}
\end{displaymath}
and $1_x\colon x\to x$ is the unit of each slice $\X/x$, this indexed monoidal category also described in \cite[3.3(1)]{DescentForMonads}.
The monoidal fibration structure on $\cod\colon(\X^2,\times,1_1)\to(X,\times,1)$ is the evident one, so it again falls in the special case \cref{eq:cocartspecialcase} now for cartesian fibres, by construction of products in slice categories.

As a final remark, analogous constructions hold for the domain functor which is again a bifibration: its fibration structure comes from pre-composing along morphisms, whereas its opfibration structure comes from taking pushouts along morphisms. In fact, \cref{rem:Fibisfibred} as well as \cref{prop:FibICatglobalfibrewise} can be thought as special cases of this more general setting, for the categories of fibrations themselves. 

\subsection{Modeling graphs and networks}

Denote by $\Grph = \Set^{\TWO}$ the usual category of (directed, multi) graphs, and consider the functor 
\begin{equation}\label{eq:vertexfunctor}
    V \maps \Grph \to \Set
\end{equation}
which sends a graph to its set of vertices. It is well-known that this functor is a split opfibration, which can also be obtained as the  Grothendieck construction on the strict opindexed category (i.e.\ functor) $\Grph_{(-)} \maps \Set \to \Cat$ described as follows. A set $X$ is mapped to the category $\Grph_X$ of graphs with vertex set $X$ and homomorphisms which fix the vertices, namely $\Set / X {\times}X$ with morphisms \begin{displaymath}
\begin{tikzcd}
    E 
    \ar[rr, "k"]
    \ar[dr, "{(s, t)}"'] 
    && 
    E' 
    \ar[dl, "{(s', t')}"] 
    \\& 
    X \times X 
    &
\end{tikzcd}\quad\textrm{ or equivalently }\quad
\begin{tikzcd}
    E
    \ar[rr, "k"]
    \ar[dr, shift left=1, "t"]
    \ar[dr, shift right=1, "s"'] 
    &&
    E'
    \ar[dl, shift left=1, "t'"]
    \ar[dl, shift right=1, "s'"'] 
    \\&
    X 
\end{tikzcd}
\end{displaymath}
Moreover, any function $f \colon X \to Y$ gives rise to the post-composition functor \[\Grph_X=\Set / X \times X \xrightarrow{(f \times f) \circ -} \Set/Y \times Y=\Grph_Y\] that maps an $X$-graph
$\xymatrix@C=.15in{(s,t) \colon E \ar@<+.3ex>[r] \ar@<-.3ex>[r]& X}$
to the $Y$-graph  
$\xymatrix@C=.15in{(f\circ s,f\circ t)\colon E \ar@<+.3ex>[r]
\ar@<-.3ex>[r]
& Y}$. 
Clearly, this functor $\Grph_{(-)}=\Set / (-{\times}-)$ relates to the codomain functor \cref{eq:fundamentalindexedcat} described earlier. As explained via \cref{eq:slicelaxator}, considering $\Set$ with its cocartesian monoidal structure induces a (symmetric) weakly lax monoidal structure on the (2-)functor, namely
\begin{equation}\label{eq:graphlaxator}
    (\Grph_{(-)}, \sqcup,1_0) \maps (\Set, +,0) \to (\Cat, \times,\1),
\end{equation}
with natural structure maps
\begin{displaymath}
\sqcup_{X, Y} \colon \Grph_X \times \Grph_Y \to \Grph_{X + Y}, \qquad
1_0\colon\1 \to \Grph_0
\end{displaymath}
where $\sqcup_{X, Y}(\xymatrix@C = .15in{E \ar@<+.3ex>[r]^-{s} \ar@<-.3ex>[r]_-{t} & X}, \xymatrix@C = .15in{F \ar@<+.3ex>[r]^-{s'} \ar@<-.3ex>[r]_-{t'} & Y})=\xymatrix@C = .25in{E + F \ar@<+.3ex>[r]^-{s + s'} \ar@<-.3ex>[r]_-{t+t'} & X + Y}$. 
\begin{rmk}\label{rmk:DecCsp}
Restricting the domain of \cref{eq:graphlaxator} to the monoidal subcategory $\FinSet$ of finite sets and post-composing with the forgetful to $\Set$ which discards morphisms between graphs, we obtain the motivating example in Fong's so-called \emph{theory of decorated cospans} \cite{FongDSpan}.\footnote{As noticed by the reviewer of this manuscript, this does not really form an ordinary lax monoidal structure on $\Grph_{(-)}\colon \FinSet\to\Set$ as assumed in \cite[\S~5.1]{FongDSpan}, due to pseudoassociativity as in \cref{eq:pseudoassociativity}. Here we consider the existing theory of decorated cospans with this subtlety in mind, leaving suitable generalizations to \cite{EquivalenceFrameworks}.}
In more detail, therein a category $F \Cospan$ is constructed from a given symmetric lax monoidal functor \[(F, \phi, \phi_0) \maps (\FinSet, +,0) \to (\Set, \times,1)\] with the broad goal of modelling open networks. The objects of such a category are finite sets, and a morphism is a cospan of finite sets $X \xrightarrow{i} N \xleftarrow{o} Y$ equipping the `network' $N$ with a certain notion of input and output, along with a \define{decoration} of the apex, namely an object $s \in F(N)$. For example, a graph decorated cospan with one input and two output designated nodes looks like
\begin{center}
    \begin{tikzpicture}[auto, scale=1.15]
    \node[circle, draw, inner sep=1pt, fill=gray, color=gray] (x) at (-1.4, -.43) {};
    \node[circle, draw, inner sep=1pt, fill] (A) at (0, 0) {};
    \node[circle, draw, inner sep=1pt, fill] (B) at (1, 0) {};
    \node[circle, draw, inner sep=1pt, fill] (C) at (0.5, -.86) {};
    \node[circle, draw, inner sep=1pt, fill=gray, color=gray] (y1) at (2.4, -.25) {};
    \node[circle, draw, inner sep=1pt, fill=gray, color=gray] (y2) at (2.4, -.61) {};
    \path (B) edge  [bend right, ->] node[above] {} (A);
    \path (A) edge  [bend right, ->] node[below] {} (B);
    \path (A) edge  [->] node[left] {} (C);
    \path (C) edge  [->] node[right] {} (B);
    \path[color=gray, very thick, shorten >=10pt, shorten <=5pt, ->, >=stealth] (x) edge (A);
    \path[color=gray, very thick, shorten >=10pt, shorten <=5pt, ->, >=stealth] (y1) edge (B);
    \path[color=gray, very thick, shorten >=10pt, shorten <=5pt, ->, >=stealth] (y2) edge (B);
\end{tikzpicture}
\end{center}
where the middle graph is the chosen decoration of the three-element set.
In fact, the apex of any $F$-decorated cospan can be viewed as an object of the (discrete) Grothendi\-eck category $\inta F$ on that functor, since it is a finite set that always comes together with an element of the set of all its possible decorations.

We call such a functor $F\in\SymMonCat_\lax((\FinSet, +,0),(\Set, \times,1))$ a \define{decorator}. In fact, the construction of a (symmetric monoidal) category $F\Cospan$ induces a functor from decorators into $\Sym\Mon\Cat$. More details can be found in \cite{EquivalenceFrameworks}, where a correspondence between decorated and \emph{structured} cospans is established, partially due to the monoidal Grothendieck construction.
\end{rmk}
    
Since the domain $\Set$ of the weakly lax monoidal functor $\Grph_{(-)}$ of \cref{eq:graphlaxator} is taken with its cocartesian monoidal structure, the strict version of \cref{thm:fibrewise=global} ensures it bijectively corresponds to a functor $\Set\to\MonCat_\mathrm{st}$, namely the fibres $\Grph_X$ have a monoidal structure which is strictly preserved by the reindexing $-\circ f \colon \Grph_X \to\  \Grph_Y$. It can be verified that the fibres are cocartesian themselves, falling under the equivalence \cref{eq:cocartspecialcase}: for any two graphs $(s,t)\colon E \rightrightarrows X$, $(s',t')\colon E' \rightrightarrows X$ over the set of vertices $X$, \cref{eq:explicitstructure1} gives
\begin{equation}\label{eq:overlay}
\begin{tikzcd}[column sep=.2in]
    E+E'
    \ar[rr, shift left, "s+s'"]
    \ar[rr, shift right, "t+t'"'] 
    && 
    X+X
    \ar[r, "\nabla_X"] 
    & 
    X
\end{tikzcd}
\end{equation}
explicitly constructed by
\[
\begin{tikzcd}
    E
    \arrow[dr]
    \arrow[ddr, shift right, bend right, "s", swap]
    \arrow[ddr, shift left, bend right, "t"]
    &&
    E'
    \arrow[dl]
    \arrow[ddl, shift right, bend left, "s'", swap]
    \arrow[ddl, shift left, bend left, "t'"]
    \\&
    E + E'
    \arrow[d, shift right, "\exists!s''", swap]
    \arrow[d, shift left, "\exists!t''"]
    \\&
    X
\end{tikzcd}\]
as is the case of colimits in slice categories. The resulting graph is the \emph{overlay} of the two given graphs, identifying corresponding vertices. This is the same as computing the pushout over the obvious inclusions of the graph with vertex set $X$ and no edges into each of the given graphs. The initial object of its fibre $\Grph_X$ is $(!,!)\colon\emptyset\rightrightarrows X$.

\begin{rmk}\label{rmk:NetMod}
Should we want to view the (symmetric) monoidal categories $\Grph_X$ as commutative monoids of $X$-graphs with overlay \cref{eq:overlay} as the binary operation on the set of objects, we can formally take isomorphism classes of objects and then forget the morphisms in each $\Grph_X$. Putting all this data together, for $\mathsf C\Mon$ the category of commutative monoids, there is an induced ordinary symmetric lax monoidal functor
\[
    (\Grph_{(-)}, \sqcup,1_0) \maps (\Set, +,0) \to (\mathsf C\Mon, \times,\1).
\]
If we furthermore restrict its domain to the symmetric groupoid of finite sets and bijections $\namedcat{S}$, we obtain the so-called \emph{network model} for graphs.
In more detail, in \cite{NetworkModels} an operad $\mathcal O_F$ is constructed from a given symmetric lax monoidal functor 
\[
    (F, \phi, \phi_0) \maps (\namedcat S, +, 0) \to (\Mon, \times,\1)
\] 
and such a functor is called a \define{network model}. The monoids $F(\mathbf n)$ are called the \define{constituent monoids} of $F$, and $\mathcal O_F$ is the \emph{underlying} operad of the induced monoidal category $(\inta F,\otimes_\phi,I_\phi)$ described in \cref{sec:monoidal}. The category of network models is denoted $\NetMod = \Sym\MonCat_\lax(({\namedcat S}, +, 0),(\Mon, \times, \1))$ and the mapping on $F \mapsto \mathcal O_F$ defines a functor $\NetMod \to \namedcat{Opd}$ into the category of operads.

The intuition behind this work is that a large complex network can be built from smaller ones by gluing them together in ways written as combinations of a few basic operations, expressed via monoid multiplications and monoidal functors. 
As an example, let $X$ be a finite set and $F(X)$ be the set of graphs with vertex set $X$; the monoid operation says that two graphs with the same vertex set can be overlaid by identifying the corresponding vertices:
\[
\begin{tikzcd}[column sep = tiny, row sep = tiny]
    \bullet
    \arrow[ddr]
    &&
    \bullet
    \arrow[ll]
    &&
    \bullet
    \arrow[rr]
    &&
    \bullet
    \arrow[loop right]
    &&
    \bullet
    \arrow[rr, bend right]
    \arrow[ddr]
    &&
    \bullet
    \arrow[ll, bend right]
    \arrow[loop right]
    \\&&&
    \cup
    &&&&
    =
    \\&
    \bullet
    &&&&
    \bullet
    &&&&
    \bullet
\end{tikzcd}\]
\end{rmk}

The above considerations exhibit a relation between the theory of decorated cospans (\cref{rmk:DecCsp}) and network models (\cref{rmk:NetMod}) via the machinery of the monoidal Grothendieck correspondence.

\begin{thm}\label{thm:decoratorsvsnetworkmodels}
    There is a faithful, injective-on-objects functor from the category of decorators into the category of network models
\begin{displaymath}
    \SymMonCat_\lax ((\FinSet, +,0), (\Set, \times,\1)) \to \Sym\MonCat_\lax ((\namedcat S, +,0), (\Mon, \times,\1))
\end{displaymath}
\end{thm}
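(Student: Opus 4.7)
The plan is to construct the claimed functor $\Phi$ explicitly and then verify its faithfulness and injectivity on objects. The central observation, parallel to \cref{cor:kappalambda}, is that $(\FinSet, +, 0)$ is cocartesian monoidal, so every object $n$ is canonically a commutative monoid with multiplication $\nabla_n \colon n + n \to n$ and unit $u_n \colon 0 \to n$. Since a lax symmetric monoidal functor preserves commutative monoid objects, each $F(n)$ inherits a canonical commutative monoid structure in $(\Set, \times)$.

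Given a decorator $(F, \phi, \phi_0)$, define $\tilde F \colon \namedcat{S} \to \Mon$ on objects by $\tilde F(n) = F(n)$ equipped with multiplication $F(n) \times F(n) \xrightarrow{\phi_{n,n}} F(n + n) \xrightarrow{F(\nabla_n)} F(n)$ and unit $1 \xrightarrow{\phi_0} F(0) \xrightarrow{F(u_n)} F(n)$; the monoid axioms follow from lax-monoidal coherence of $F$ together with the commutative monoid equations for $(\nabla_n, u_n)$ in $\FinSet$. On a bijection $\sigma$, set $\tilde F(\sigma) = F(\sigma)$; this is a monoid homomorphism because $\nabla$ and $u$ are natural for bijections. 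The symmetric lax monoidal structure on $\tilde F$ is given by the original components $\phi_{m,n}$ and $\phi_0$, and the nontrivial point is that each $\phi_{m,n}$ is itself a monoid homomorphism $\tilde F(m) \times \tilde F(n) \to \tilde F(m+n)$. This reduces to the identity $\nabla_{m+n} = (\nabla_m + \nabla_n) \circ (1_m + \beta_{n,m} + 1_n)$ in $\FinSet$ combined with the associativity and symmetry axioms for $\phi$; I expect this coherence check to be the main technical step of the proof.

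On morphisms, a symmetric monoidal natural transformation $\alpha \colon F \Rightarrow F'$ restricts to $\tilde \alpha$ with components $\tilde\alpha_n = \alpha_n$, which are monoid homomorphisms by compatibility of $\alpha$ with $\phi$ and $\phi_0$. Functoriality of $\Phi$ is immediate from this componentwise definition. For faithfulness, $\namedcat{S}$ and $\FinSet$ have identical classes of objects, so two monoidal natural transformations $\alpha, \alpha'$ with $\Phi(\alpha) = \Phi(\alpha')$ share every component and hence coincide as natural transformations on $\FinSet$.

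For injectivity on objects, suppose $\Phi(F) = \Phi(F')$. Then $F$ and $F'$ agree on objects, on all bijections, and their lax monoidal structure maps coincide on $\namedcat{S}$; equality of the induced monoid structures on each $F(n) = F'(n)$ also forces $F(\nabla_n) = F'(\nabla_n)$ and $F(u_n) = F'(u_n)$. Since every morphism of $\FinSet$ factors, up to a bijection rearranging preimages, as a coproduct of iterated folds $\nabla$ and units $u$, and since $\phi$ and $\phi_0$ have already been recovered, this pins down $F(f) = F'(f)$ for every morphism $f$, giving $F = F'$.
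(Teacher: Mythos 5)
Your construction of the functor is the same one the paper produces, just assembled by hand: the monoid structure $F(\nabla_n)\circ\phi_{n,n}$, $F(u_n)\circ\phi_0$ on each $F(n)$ is exactly \cref{eq:explicitstructure1}, and your packaging of it as ``every object of a cocartesian category is a commutative monoid object, and symmetric lax monoidal functors preserve commutative monoids'' is a clean substitute for the explicit coherence check in \cref{rmk:laxmonfun}. The paper instead obtains all of this as a degenerate (discrete, strict) instance of the global-versus-fibrewise correspondence of \cref{thm:fibrewise=global}, then precomposes with $\namedcat{S}\hookrightarrow\FinSet$; the only content it leaves to verification is that $\phi_{m,n}$ and $\phi_0$ are monoid homomorphisms, which is precisely the identity $\nabla_{m+n}=(\nabla_m+\nabla_n)\circ(1+\beta+1)$ plus the symmetry axiom for $\phi$ that you single out. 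So on the construction and on faithfulness (which the paper does not even spell out, but which is immediate since $\ob\namedcat{S}=\ob\FinSet$ and the components are unchanged) your argument is fine.

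The gap is in your injectivity-on-objects argument. From equality of the induced monoid multiplications you only get $F(\nabla_n)\circ\phi_{n,n}=F'(\nabla_n)\circ\phi_{n,n}$, which determines $F(\nabla_n)$ only on the \emph{image} of $\phi_{n,n}$; the laxator of a decorator is typically far from surjective (for the graph decorator, the image of $\phi_{n,n}$ consists only of the graphs on $n+n$ vertices with no edges crossing the two blocks). The same problem recurs in your factorization step: naturality gives $F(f+g)\circ\phi_{m,m'}=\phi_{n,n'}\circ(F(f)\times F(g))$, which recovers $F(f+g)$ from $F(f)$ and $F(g)$ only on the image of $\phi_{m,m'}$. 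So ``$\phi$ has been recovered, hence $F(f)$ is pinned down'' does not follow; you would need either surjectivity of the laxator or an additional argument exploiting functoriality of $F$ on all of $\FinSet$ to rule out two decorators that agree on bijections, on the laxator, and on the induced monoid structures, yet differ on some fold off the image of $\phi$. The paper offers no proof of this clause either, so there is nothing to compare against, but as written your argument for injectivity on objects does not close.
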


\begin{proof}
Starting with a lax monoidal functor $(F, \phi,\phi_0) \maps (\FinSet, +,0) \to (\Set, \times,\1)$, we can view it as a special case of a monoidal strict opindexed category (namely $\omega,\xi,\zeta$ as in \cref{eq:omega} are identities) which is in fact discrete, post-composing with $\Set\hookrightarrow\Cat$. Under \cref{thm:fibrewise=global}, this bijectively corresponds to an ordinary functor $\FinSet\to\MonCat_{\mathrm{st}}$ but with two particular characteristics. First of all, the fibres end up being strict monoidal, see \cref{rmk:laxmonfun}; combining this with the fact that they are mere sets rather than categories, means that the fibres are monoids, with structure as in \cref{eq:explicitstructure1}. Second, it can be verified that the original laxator and unitor $(\phi,\phi_0)$ of $F$ are in fact morphisms of monoids due to their strict naturality and functoriality of $F$. Therefore the ordinary functor $F$ becomes lax monoidal $(\FinSet,+,0)\to(\Mon,\times,\1)$. Pre-composing with the inclusion from the groupoid of finite sets and bijections $\namedcat S$ concludes this proof.
\end{proof}

As a result, any decorator gives rise to a network model but not vice versa: the constructed network model has always commutative constituent monoids. Noncommutative network models exist and arise in applications, see \cite{NoncommNetMods}.

Concluding this section, the symmetric weakly lax monoidal functor $\Grph_{(-)}$ \cref{eq:graphlaxator} gives rise to a symmetric monoidal opfibration structure on the vertex functor
$V\colon(\Grph,+,0)\to(\Set, +, 0)$ from the very beginning of this section \cref{eq:vertexfunctor}.  This is the well-known fact that the forgetful $V$ (strictly) preserves all coproducts and the initial object, falling under the cocartesian monoidal fibration case of \cref{eq:cocartspecialcase} and  \cref{cor:kappalambda}. 

\subsection{Family Fibration: Zunino and Turaev categories}\label{sec:familyfib}

Recall that for any category $\C$, the standard \emph{family fibration} is induced by the (strict) functor
\begin{equation}\label{eq:functV}
    [-,\C]\colon \Set^\op\to\Cat
\end{equation}
which maps every discrete category $X$ to the functor category $[X,\C]$ and every function $f \colon X \to Y$ to the functor $f^* = [f,1]$, i.e.\ pre-composition with $f$.
The total category of the induced fibration $\Fam(\C)\to\C$ has as objects pairs $(X,M\colon X\to\C)$ essentially given by a family of $X$-indexed objects in $\C$, written $\{M_x\}_{x\in X}$, whereas the morphisms are \begin{displaymath}
\begin{tikzcd}[column sep=.7in,row sep=.2in]
X\ar[dr, "M"]\ar[dd, "f"'] \\ \ar[r,phantom,"\Downarrow{\scriptstyle\alpha}"description] & \C \\
Y \ar[ur,"N"']
\end{tikzcd}
\end{displaymath}
namely a function $f\colon X\to Y$ together with families of morphisms $\alpha_x\colon M_x\to N_{fx}$ in $\C$. Notice the similarity of this description with \cref{eq:indexed1cell}, which for the strict indexed categories case looks like a non-discrete version of the family fibration, for $\C=\Cat$; see also \cref{rem:ICatisfibred}. Moreover, it is a folklore fact that $\Fam(\C)$ is the free coproduct cocompletion on the category $\C$. 

On the other hand, we could consider the opfibration induced by the very same functor \cref{eq:functV}, denoted by $\Maf(\C)\to\Set^\op$. The objects of $\Maf(\C)$ are the same as $\Fam(\C)$, but morphisms $\{M_x\}_{x\in X}\to\{N_y\}_{y\in Y}$ between them are functions $g\colon Y\to X$ (i.e.\ $X\to Y$ in $\Set^\op$) together with families of arrows $\beta_y\colon M_{gy}\to N_y$ in $\C$. Notice that these are now indexed over the set $Y$ rather than $X$ like before, and in fact $\Maf (\X) = \Fam (\X^\op)^\op$.

In the case that the category is monoidal $(\C,\ot,I)$, the (2-)functor $[-,\C]$ has a canonical weakly lax monoidal structure. Explicitly, by taking its domain $\Set^\op$ to be cocartesian by the usual cartesian monoidal structure $(\Set,\times,1)$, the structure maps are
\begin{displaymath}
\phi_{X,Y} \colon [X, \C] \times [Y, \C] \to [X \times Y, \C], \qquad \phi_0 \colon \1 \xrightarrow{I_\C} [\1, \C] \cong \C
\end{displaymath}
where $\phi_{X,Y}$ corresponds, under the tensor-hom adjunction in $\Cat$, to
\begin{displaymath}
    [X, \C] \times [Y, \C] \times X \times Y \xrightarrow{\sim} [X, \C] \times X \times [Y, \C] \times Y \xrightarrow{\textrm{ev}_X \times \textrm{ev}_Y} \C \times \C \xrightarrow{\otimes} \C.
\end{displaymath}
These are again natural components, and for example \cref{eq:omega} has components the natural isomorphisms between the assignments $Mx\ot (Ny\ot Uz)$ and $(Mx\ot Ny)\ot Uz$. 
By \cref{thm:mainthmsplit}, this monoidal strict indexed category endows the corresponding split fibration $\Fam(\X) \to \Set$ with a monoidal structure via $\{M_x\} \otimes \{N_y\} \coloneqq \{M_x \otimes N_y \}_{X \times Y}$. On the other hand, we could use the dual part of the same theorem, and instead consider the induced monoidal split opfibration $\Maf(\X) \to \Set^\op$ corresponding to the same $([-,\C],\phi,\phi_0)$.

Moreover, since $\Set$ is cartesian, \cref{thm:fibrewise=global} also applies in both cases, giving a monoidal structure to the fibres as well: for $M\colon X\to\C$ and $N\colon X\to\C$, their fibrewise tensor product and unit are given by
\begin{displaymath}
X\xrightarrow{\Delta}X\times X\xrightarrow{M\times N}\C\times\C\xrightarrow{\ot}\C, \qquad
X\xrightarrow{!}1\xrightarrow{I}\C
\end{displaymath}
which are precisely constructed as in \cref{eq:explicitstructure1}. Once again, notice the direct similary with \cref{eq:icatxprod}, the fibrewise monoidal structure on $\ICat(\X)$; see also \cref{rem:ICatisfibred} and \cref{prop:FibICatglobalfibrewise}.

As an interesting example, consider $\C=\Mod_R$ for a commutative ring $R$, with its usual tensor product $\ot_R$. In \cite{TuraevZunino}, the authors introduce a category $\mathcal{T}$ of \emph{Turaev} $R$-modules, as well as a category $\mathcal{Z}$ of \emph{Zunino} $R$-modules, which serve as symmetric monoidal categories where group-(co)algebras and Hopf group-(co)algebras, \cite{Turaev}, live as (co)monoids and Hopf monoids respectively. 

In more detail, the objects of both $\mathcal{T}$ and $\mathcal{Z}$ are defined to be pairs $(X,M)$ where $X$ is a set and $\{M_x\}_{x\in X}$ is an $X$-indexed family of $R$-modules, and their morphisms are respectively
\begin{displaymath}
    (\mathcal{T})
    \begin{cases}
        s \colon M_{g(y)} \to N_y \textrm{ in } \Mod_R \\
        g \colon Y \to X \textrm{ in } \Set
    \end{cases}
    \quad
    (\mathcal{Z})
    \begin{cases}
        t\colon M_x\to N_{f(x)}\textrm{ in }\Mod_R \\
        f\colon X\to Y\textrm{ in }\Set
    \end{cases}
\end{displaymath}
There is a symmetric pointwise monoidal structure, $\{M_x\otimes_R N_y\}_{X\times Y}$, and there are strict monoidal forgetful functors $\mathcal{T}\to\Set^\op$, $\mathcal{Z}\to\Set$.
It is therein shown that comonoids in $\mathcal{T}$ are \emph{monoid-coalgebras} and monoids in $\mathcal{Z}$ are \emph{monoid-algebras}, i.e.\ families of $R$-modules indexed over a monoid, together with respective families of linear maps
\begin{gather*}
    (\mathcal{T})\quad C_{g*h}\to C_g\ot C_h \qquad (\mathcal{Z})\quad
    A_g\ot A_h\to A_{g*h} \\
    C_e\to R \qquad \phantom{ZZZZZZZ}R\to A_e
\end{gather*}
satisfying appropriate axioms. Based on the above, it is clear that $\mathcal{T}=\Maf(\Mod_R)$ and $\mathcal{Z}=\Fam(\Mod_R)$, which clarifies the origin of these categories and can be directly used to further generalize the notions of Hopf group-(co)monoids in arbitrary monoidal categories.

\subsection{Global categories of modules and comodules}\label{sec:ModComod}

For any monoidal category $\mathcal{V}$,
there exist \emph{global} categories of modules and comodules, denoted by $\Mod$ and $\Comod$ \cite[6.2]{PhDChristina}.
Their objects are all (co)modules over (co)monoids in $\catname{V}$, whereas a morphism between an $A$-module $M$ and a $B$-module $N$ is given by a monoid map $f\colon A\to B$ together with a morphism $k\colon M\to N$ in $\catname{V}$ satisfying the commutativity of 
\begin{displaymath}
\begin{tikzcd}
    A\otimes M\ar[rr,"\mu"]\ar[d,"1\ot k"'] && M\ar[d,"k"] \\
    A\otimes N\ar[r,"f\ot1"'] & B\ot N\ar[r,"\mu"'] & N
\end{tikzcd}
\end{displaymath}
where $\mu$ denotes the respective action, and dually for comodules.
Both these categories arise as the total categories induced by the Grothendieck construction on the functors 
\begin{equation}\label{eq:functors}
    \xymatrix @R=.05in @C=.4in
    {
        \Mon(\catname{V})^\op
        \ar[r] 
        & 
        \Cat 
        \\
        A
        \ar@{|.>}[r]
        \ar[dd]_-f 
        & 
        \Mod_\catname{V}(A) 
        \\ \hole \\
        B
        \ar@{|.>}[r] 
        & 
        \Mod_\catname{V}(B)
        \ar[uu]_-{f^*}
    } 
    \xymatrix @R=.05in @C=.4in
    {
        \Comon(\catname{V})\ar[r] & 
        \Cat 
        \\
        C\ar@{|.>}[r]\ar[dd]_-g & \Comod_\catname{V}(C)
        \ar[dd]^-{g_!} \\
        \hole \\
        D\ar@{|.>}[r] & \Comod_\catname{V}(D)
    }
\end{equation}
where $f^*$ and $g_!$ are (co)restriction of scalars: if $M$ is a $B$-module, $f^*(M)$ is an $A$-module via the action
\begin{displaymath}
    A \ot M \xrightarrow{f \ot 1} B \ot M \xrightarrow{\mu} M.
\end{displaymath}
The induced split fibration and opfibration, $\Mod \to \Mon(\catname{V})$ and $\Comod \to \Comon(\catname{V})$, map a (co)module to its respective (co)monoid.

Recall that when $(\catname{V},\ot,I,\sigma)$ is braided monoidal, its categories of monoids and como\-noids inherit the monoidal structure: if $A$ and $B$ are monoids, then $A\ot B$ has also a monoid structure via
\begin{displaymath}
A \ot B \ot A \ot B \xrightarrow{1 \ot \sigma \ot 1} A \ot A \ot B \ot B \xrightarrow{m \ot m} A \ot B,\qquad
I \cong I \ot I \xrightarrow{j \ot j} A \ot B
\end{displaymath}
where $m$ and $j$ give the respective monoid structures. 
In that case, the induced split fibration and opfibration are both monoidal. This can be deduced by directly checking the conditions of \cref{def:monoidal_fibration}, as was the case in the relevant references, or in our setting by using \cref{thm:mainthmsplit} since both (2-)functors \cref{eq:functors} are weakly lax monoidal. For example, for any $A,B \in \Mon(\catname{V})$ there are natural maps
\begin{displaymath}
\phi_{A,B} \colon \Mod_\catname{V} (A) \times \Mod_\catname{V} (B) \to \Mod_\catname{V} (A \otimes B)\qquad  \phi_0 \colon \1 \to \Mod_\catname{V} (I)
\end{displaymath}
with $\phi_{A, B} (M, N) = M \ot N$, with the $A \ot B$-module structure being
\begin{displaymath}
    A \ot B \ot M \ot N \xrightarrow{1 \ot \sigma \ot 1} A \ot M \ot B \ot N \xrightarrow{\mu \ot \mu} M \ot N
\end{displaymath}
and $\phi_0(*)=I$, which are pseudoassociative and pseudounital in the sense that e.g. for any $M,N,P\in\Mod_\mathcal{V}(A)\times\Mod_\mathcal{V}(B)\times\Mod_\mathcal{V}(C)$, $M\ot(N\ot P)$ is only isomorphic to $(M\ot N)\ot P$ as $(A\ot B)\ot C$-modules.

Notice that in general, the monoidal bases $\Mon(\catname{V})$ and $\Comon(\catname{V})$ are not (co)\-ca\-rte\-sian, since they have the same tensor as $(\catname{V},\otimes,I,\sigma)$. Therefore this case does not fall under \cref{thm:fibrewise=global}, hence the fibre categories are not monoidal.
For example in $(\mathsf{Vect}_k,\otimes_k,k)$, the $k$-tensor product of two $A$-modules for a $k$-algebra $A$ is not an $A$-module as well.

We remark that the induced monoidal opfibration $\Comod \to \Comon(\catname{V})$ in fact serves as the monoidal base of an \emph{enriched fibration} structure on $\Mod \to \Mon (\catname{V})$ as explained in \cite{EnrichedFibration}, built upon an enrichment between the monoidal bases $\Mon(\catname{V})$ in $\Comon(\catname{V})$ established in \cite{Measuringcomonoid}.
Moreover, analogous monoidal structures are induced on the (op)fibrations of monads and comonads in any fibrant monoidal double category, see \cite[Prop. 3.18]{VCocats}. 

\subsection{Systems as monoidal indexed categories}\label{sec:systemsasmonicats}

In \cite{AbstractMachines} as well as in earlier works e.g.\ \cite{Vagner.Spivak.Lerman:2015a}, the authors investigate a categorical framework for modeling systems of systems using algebras for a monoidal category. In more detail, systems in a broad sense are perceived as lax monoidal pseudofunctors
\begin{displaymath}
\mathcal{W}_\mathcal{C}\to\Cat
\end{displaymath}
where $\mathcal{W}_\C$ is the monoidal category of $\C$-\emph{labeled boxes} and \emph{wiring diagrams} with types in a finite product category $\mathcal{C}$. Briefly, the objects in $\mathcal{W}_\mathcal{C}$ are pairs $X=(X^\mathrm{in},X^\mathrm{out})$ of finite sets equipped with functions to $\ob\mathcal{C}$, thought of as boxes
\begin{displaymath}
\begin{tikzpicture}[oriented WD, bbx=.1cm, bby =.1cm, bb port sep=.15cm]
	\node [bb={3}{3}] (X) {$X$};
	\draw[label]
		node[left=.1 of X_in1]  {$a_1$}
		node[left=.1 of X_in2]  {$\dotso$}
		node[left=.1 of X_in3]  {$a_m$}
		node[right=.1 of X_out1] {$b_1$}
		node[right=.1 of X_out2]  {$\dotso$}
		node[right=.1 of X_out3] {$b_n$};
\end{tikzpicture}
\end{displaymath}
where $X^\mathrm{in}=\{a_1,\ldots,a_m\}$ are the input ports, $X^\mathrm{out}=\{b_1,\ldots,b_n\}$
the output ones and all wires are associated to a $\mathcal{C}$-object expressing the type of information that can go through them. A morphism $\phi\colon X\to Y$ in this category consists
of a pair of functions
\begin{displaymath}
\left\{\begin{array}{l}
\inp{\phi}\colon\inp{X}\to\out{X}+\inp{Y} \\
\out{\phi}\colon\out{Y}\to\out{X}\end{array}\right.
\end{displaymath}
that respect the $\mathcal{C}$-types,
which roughly express which port is `fed information' by which.
Graphically, we can picture it as

\begin{equation}\label{eq:wiringdiagpic}
\begin{tikzpicture}[oriented WD,baseline=(Y.center), bbx=2em, bby=1.2ex, bb port sep=1.2]
\node[bb={6}{6}] (X) {};
\node[bb={2}{3}, fit={($(X.north east)+(0.7,1.7)$) ($(X.south west)-(.7,.7)$)}] (Y) {};
\node [circle,minimum size=4pt, inner sep=0, fill] (dot1) at ($(Y_in1')+(.5,0)$) {};
\node [circle,minimum size=4pt, inner sep=0, fill] (dot2) at ($(X_out4)+(.5,0)$) {};
\draw[ar] (Y_in1') to (dot1);
\draw[ar] (X_out4) to (dot2);
\draw[ar] (Y_in2') to (X_in5);
\draw[ar] (Y_in2') to (X_in4);
\draw[ar] (X_out5) to (Y_out3');
\draw[ar] (X_out2) to (Y_out1');
\draw[ar] (X_out2) to (Y_out2');
\draw[ar] let \p1=(X.north west), \p2=(X.north east), \n1={\y1+\bby}, \n2=\bbportlen in
	(X_out1) to[in=0] (\x2+\n2,\n1) -- (\x1-\n2,\n1) to[out=180] (X_in1);
\draw[ar] let \p1=(X.north west), \p2=(X.north east), \n1={\y1+2*\bby}, \n2=\bbportlen in
	(X_out1) to[in=0] (\x2+\n2,\n1) -- (\x1-\n2,\n1) to[out=180] (X_in2);
\draw[ar] let \p1=(X.south west), \p2=(X.south east), \n1={\y1-\bby}, \n2=\bbportlen in
	(X_out6) to[in=0] (\x2+\n2,\n1) -- (\x1-\n2,\n1) to[out=180] (X_in6);	
\draw [label] node at ($(Y.north east)-(.5cm,.3cm)$) {$Y$}
              node at ($(X.north east)-(.4cm,.3cm)$) {$X$}
		      node[left=.1 of X_in3]  {$\dotso$}
		      node[right=.1 of X_out3] {$\dotso$}
		      node[above=of Y.north] {$\phi\colon X\to Y$}
	  ;
\end{tikzpicture}
\end{equation}
Composition of morphisms can be thought of a zoomed-in picture of three boxes, and the monoidal structure amounts to parallel placement of boxes as in
\begin{displaymath}
\begin{tikzpicture}[oriented WD,baseline=(Y.center), bbx=1.3em, bby=1ex, bb port sep=.06cm]
    \node[bb={3}{3}] (X1) {};
    \node[bb={3}{3},below =.5 of X1] (X2) {};
    \node[fit=(X1)(X2),draw] {};
    \draw[label] 
    node at ($(X1.west)+(1,0)$) {$X_1$}
    node at ($(X2.west)+(1,0)$) {$X_2$}
    node[left=.1 of X1_in2]  {$\dotso$}
    node[right=.1 of X1_out2]  {$\dotso$}
    node[left=.1 of X2_in2]  {$\dotso$}
    node[right=.1 of X2_out2]  {$\dotso$};
    \draw (X1_in1) -- (-2.5,1.7);
    \draw (X1_out1) -- (2.5,1.7);
    \draw (X1_in3) -- (-2.5,-1.7);
    \draw (X1_out3) -- (2.5,-1.7);
    \draw (X2_in1) -- (-2.5,-5.6);
    \draw (X2_out1) -- (2.5,-5.6);
    \draw (X2_in3) -- (-2.5,-9.1);
    \draw (X2_out3) -- (2.5,-9.1);
 \end{tikzpicture}
\end{displaymath}
There is a close connection between the definition of $\mathcal{W}_\mathcal{C}$ and that of \emph{Dialectica} categories as well as \emph{lenses}; such considerations are the topic of work in progress \cite{EverythingisDialectica}.

The systems-as-algebras formalism uses lax monoidal pseudofunctors from this category
$\mathcal{W}_\mathcal{C}$ to $\Cat$ that essentially receive a general picture such as \begin{displaymath}
\begin{tikzpicture}[oriented WD, bb min width =.5cm, bbx=.5cm, bb port sep =1,bb port length=.08cm, bby=.15cm]
\node[bb={2}{2},bb name = {\tiny$X_1$}] (X11) {};
\node[bb={3}{3},below right=of X11,bb name = {\tiny$X_2$}] (X12) {};
\node[bb={2}{1},above right=of X12,bb name = {\tiny$X_3$}] (X13) {};
\draw (X11_out1) to (X13_in1);
\draw (X11_out2) to (X12_in1);
\draw (X12_out1) to (X13_in2);
\node[bb={2}{2}, below right = -1 and 1.5 of X12, bb name = {\tiny$X_4$}] (X21) {};
\node[bb={1}{2}, above right=-1 and 1 of X21,bb name = {\tiny$X_5$}] (X22) {};
\draw (X21_out1) to (X22_in1);
\draw let \p1=(X22.north east), \p2=(X21.north west), \n1={\y1+\bby}, \n2=\bbportlen in
         (X22_out1) to[in=0] (\x1+\n2,\n1) -- (\x2-\n2,\n1) to[out=180] (X21_in1);
\node[bb={2}{2}, fit = {($(X11.north east)+(-1,3)$) (X12) (X13) ($(X21.south)$) ($(X22.east)+(.5,0)$)}, bb name ={\scriptsize $Y$}] (Z) {};
\draw (Z_in1') to (X11_in2);
\draw (Z_in2') to (X12_in2);
\draw (X12_out2) to (X21_in2);
\draw let \p1=(X22.south east),\n1={\y1-\bby}, \n2=\bbportlen in
  (X21_out2) to (\x1+\n2,\n1) to (Z_out2');
 \draw let \p1=(X12.south east), \p2=(X12.south west), \n1={\y1-\bby}, \n2=\bbportlen in
  (X12_out3) to[in=0] (\x1+\n2,\n1) -- (\x2-\n2,\n1) to[out=180] (X12_in3);
\draw let \p1=(X22.north east), \p2=(X11.north west), \n1={\y2+\bby}, \n2=\bbportlen in
  (X22_out2) to[in=0] (\x1+\n2,\n1) -- (\x2-\n2,\n1) to[out=180] (X11_in1);
\draw let \p1=(X13_out1), \p2=(X22.north east), \n2=\bbportlen in
 (X13_out1) to (\x1+\n2,\y1) -- (\x2+\n2,\y1) to (Z_out1');
\end{tikzpicture}
\end{displaymath}
(which really takes place in the underlying operad of $\mathcal{W}_\mathcal{C}$) and assign systems of a certain kind to all inner boxes; the lax monoidal and pseudo\-functorial structure of this assignment formally produce a system of the same kind for the outer box. 

Examples of such systems are discrete dynamical systems (Moore machines in the finite case), continuous dynamical systems but also more general systems with deterministic or total conditions; details can be found in the provided references. Since all these systems are lax monoidal pseudofunctors from the non-cocartesian monoidal category of wiring diagrams to $\Cat$, i.e.\ monoidal indexed categories, the monoidal Grothendieck construction \cref{thm:mainthm} induces a corresponding monoidal fibration in each system case, and this global structure does not reduce to a fibrewise one. 

For example, the algebra for discrete dynamical systems
\cite[\S 2.3]{AbstractMachines}
\begin{equation}\label{eq:DDS}
\mathrm{DDS}\colon\mathcal{W}_\Set\to\Cat  
\end{equation}
assigns to each box $X=(X^\mathrm{in},X^\mathrm{out})$
the category of all discrete dynamical systems with fixed input and output sets being $\prod_{x\in X^\mathrm{in}}x$ and $\prod_{y\in X^\mathrm{out}}y$ respectively. There exist morphisms between systems of the same input and output set, but not between those with different ones. To each morphism, i.e.\ wiring diagram as in \cref{eq:wiringdiagpic}, $\mathrm{DDS}$
produces a functor that maps an inner discrete dynamical system to a new outer one, with changed input and output sets accordingly.
(Pseudo)functoriality of this assignment 
allows the coherent zoom-in and zoom-out on dynamical systems built out of smaller dynamical systems, and monoidality allows the creation of new dynamical systems on parallel boxes.

Being a monoidal indexed category,  \cref{eq:DDS} gives rise to a monoidal opfibration over $\mathcal{W}_\Set$. Its  total category $\inta \mathrm{DDS}$ has objects all dynamical systems with arbitrary input and output sets,  morphisms that can now go between systems of different inputs/outputs, and also a natural tensor product inherited from that in $\mathcal{W}_\Set$ and the laxator of $\inta\mathrm{DDS}$. In a sense, this category has all the required flexibility for the direct communication (via morphisms in the total category) between any discrete dynamical system, or any composite of systems or parallel placement of them, whereas the wiring diagram algebra \cref{eq:DDS} focuses on the machinery of building new discrete dynamical systems systems from old.

This classic change of point of view also transfers over to maps of algebras, i.e.\ indexed monoidal 1-cells. As an example, see \cite[\S 5.1]{AbstractMachines}, discrete dynamical systems can naturally be viewed as general \emph{total} and \emph{deterministic} machines denoted by $\mathrm{Mch}^\mathrm{td}$,
via a monoidal pseudonatural transformation
\[
\begin{tikzcd}
    \mathcal{W}_\Set
    \arrow[dr, "\mathrm{DDS}"]
    \arrow[dd]
    \\
    \arrow[r, phantom, "\Downarrow"]
    &
    \Cat
    \\
    \mathcal{W}_{\widetilde{\mathrm{Int}_N}}
    \arrow[ur,"\mathrm{Mch}^\mathrm{td}",swap]
\end{tikzcd}\]
which also changes the type of input and output wires from sets to \emph{discrete interval sheaves} $\widetilde{\mathrm{Int}_N}$. This gives rise to a monoidal opfibred 1-cell
\begin{displaymath}
\begin{tikzcd}
    \inta\mathrm{DDS}
    \ar[r]
    \ar[d] 
    & 
    \inta\mathrm{Mch}^\mathrm{td}
    \ar[d] 
    \\
    \mathcal{W}_\Set
    \ar[r] 
    & 
\mathcal{W}_{\widetilde{\mathrm{Int}}_N}
\end{tikzcd}
\end{displaymath}
which provides a direct functorial translation between the one sort of system to the other in a way compatible with the monoidal structure.

As a final note, this method of modeling certain objects as algebras for a monoidal category (a.k.a. strict or general monoidal indexed categories) carries over to further contexts than systems and the wiring diagram category. Examples include hypergraph categories as algebras on cospans  \cite{HypergraphCats} and traced monoidal categories as algebras on cobordisms  \cite{TracedMonCatsAlg}. In all these cases, the monoidal Grothendieck construction gives a potentially fruitful change of perspective that should be further investigated.

\appendix

\section{Summary of structures}\label{sec:appendix}

The bulk of the main body of this paper is dedicated to proving various monoidal variations of the equivalence between fibrations and indexed categories, using general results in monoidal 2-category theory. In this appendix, we detail the descriptions of the (braided/symmetric) monoidal structures on the total category of the Grothendieck construction, assuming the appropriate data is present. We also provide a hands-on correspondence that underlies the proof of \cref{thm:fibrewise=global} regarding the transfer of monoidal structure from a functor to its target and vice versa. We hope this section can serve as a quick and clear reference on some fundamental constructions of this work.

\subsection{Monoidal structures}\label{sec:monoidal}

As sketched under \cref{cor:fixedbasemonoidalGr}, let $(\X, \otimes, I)$ be a monoid\-al category, and \[(\M, \mu, \mu_0) \maps (\X^\op, \otimes^\op, I) \to (\Cat, \times, \1)\] a monoidal indexed category, a.k.a. lax monoidal pseudofunctor.  Recall that $\mu$ is pseudonatural transformation consisting of functors $\mu_{x,y} \maps \M x \times \M y \to \M(x \otimes y)$ for any objects $x$ and $y$ of $\X$, and natural isomorphisms
\[
\begin{tikzcd}[column sep = 70]
    \M z \times \M w 
    \arrow[r, "\M f \times \M g"]
    \arrow[d, "\mu_{z,w}", swap]
    &
    \M x \times \M y
    \arrow[d, "\mu_{x,y}"]
    \arrow[dl, phantom, "{\scriptstyle\stackrel{\mu_{f,g}}{\cong}}"]
    \\
    \M(z \otimes w)
    \arrow[r, "\M(f \otimes g)", swap]
    &
    \M(x \otimes y)
\end{tikzcd}\]
for any arrows $f \maps x \to z$ and $g \maps y \to w$ in $\X$. Also the unique component of $\mu_0$ is the functor $\mu_0\colon\1\to\M(I)$.

The induced tensor product functor on the total category, denoted as $\otimes_\mu \maps \inta \M \times \inta \M \to \inta \M$, is given on objects by 
\begin{displaymath}
    (x,a) \otimes_\mu (y,b) = (x \otimes y, \mu_{x,y}(a,b))
\end{displaymath}
On morphisms $\left(f\colon x\to z, k\colon a\to(\M f)c\right)$ and $\left(g\colon y\to w,\ell\colon b\to(\M g)d\right)$, we get
\[
    (f, k) \otimes_\mu (g, \ell) = (x\ot y\xrightarrow{f \otimes g} z\ot w, \mu_{f,g}(\mu_{x, y}(k, \ell)))
\]
where the latter is the composite morphism
\begin{displaymath}
\mu_{x,y}(a,b)\xrightarrow{\mu_{x,y}(k,\ell)}\mu_{x,y}\left((\M f)(c),(\M g)(d)\right)\xrightarrow{\sim}\M(f\otimes g)(\mu_{z.w}(c,d))\textrm{ in }\M(x\otimes y).
\end{displaymath}
The monoidal unit is $I_\mu=(I, \mu_0)$.

If $a_{x,y,z} \maps (x \otimes y) \otimes z \to x \otimes (y \otimes z)$ denotes the associator in $\X$, the associator for $(\inta \M, \otimes_\mu, I_\mu)$ is given by
\begin{displaymath}
    \alpha_{(x,b), (y,c), (z,d)} = (\alpha_{x,y,z}, \omega_{x,y,z} (b,c,d))
\end{displaymath}
where $\omega$ is the invertible modification \cref{eq:omega}.

If $l_x \maps I \otimes x \to x$ and $r_x \maps x \otimes I \to x$ are the left and right unitors in $\X$, the unitors in $\inta \M$ are defined as
\begin{gather*}
\lambda_x = (l_x, \xi_x^{\text{-}1}(a))\colon (I,\mu_0)\otimes_\mu(x,a)\to(x,a) \\
\rho_x = (r_x, \zeta_x(a))\colon(x,a)\otimes_\mu(I,\mu_0)\to(x,a)
\end{gather*}
where $\zeta$ and $\xi$ are invertible modifications as in \cref{eq:omega}.

We now turn to the correspondence between 1-cells of \cref{thm:mainthm}:
given a monoidal indexed 1-cell
\[
\begin{tikzcd}
    (\X, \otimes, I)^\op
    \arrow[dr, "{(\M, \mu, \mu_0)}"]
    \arrow[dd, "{(F, \psi, \psi_0)^\op}", swap]
    \\
    \arrow[r, phantom, "\Downarrow{\scriptstyle\tau}"]
    &
    (\Cat, \times, \1)
    \\
    (\Y, \otimes, I)^\op
    \arrow[ur, "{(\N, \nu, \nu_0)}", swap]
\end{tikzcd}\]
where $\M$ and $\N$ are lax monoidal pseudofunctors and $F$ is a monoidal functor, as in \cref{prop:moni1cell},
we first of all obtain an ordinary fibred 1-cell $(P_\tau,F)\colon P_\M\to P_\N$ as explained above \cref{eq:inducedfibred1cell}
\begin{displaymath}
\begin{tikzcd}
    \inta \M
    \ar[r,"P_\tau"]
    \ar[d,"P_\M"'] 
    & 
    \inta
    \N
    \ar[d,"P_\N"] 
    \\
    \X
    \ar[r,"F"'] 
    & 
    \Y
\end{tikzcd}
\end{displaymath}
with $P_\tau (x, a) = (F x, \tau_x (a))$. The functor $F$ is already monoidal, and $P_\tau$ obtains a monoidal structure too: for example, there are isomorphisms
\begin{displaymath}
    P_\tau(x,a) \otimes_\nu P_\tau (y, b) \xrightarrow{\sim} P_\tau ((x, a) \otimes_\mu (y, b)) \quad \textrm{ in } \inta \N
\end{displaymath}
between the objects
\begin{align*}
    P_\tau (x, a) \otimes_\nu P_\tau (y, b)
    & =(Fx, \tau_x (a)) \ot_\nu (Fy, \tau_y (b) = (Fx \ot Fy, \nu_{Fx, Fy} (\tau_x (a), \tau_y (b)) \\
    P_\tau((x,a)\otimes_\mu(y,b))
    & =P_\tau(x\ot y,\mu_{x,y}(a,b))=(F(x\ot y),\tau_{x\ot y}(\mu_{x,y}(a,b)))
\end{align*}
given by $\psi_{x,y}\colon Fx\ot Fy\xrightarrow{\sim}F(x\ot y)$ and by 
\[\nu_{Fx,Fy}(\tau_x(a),\tau_y(b))\cong\N(\psi_{x,y})(\tau_{x\ot y}(\mu_{x,y}(a,b)))\]
essentially given by the monoidal pseudonatural isomorphism \cref{eq:monpseudocomponents}
for $\tau\colon\M\Rightarrow\N F^\op$. 
As a result, $(P_\tau,F)$ is indeed a monoidal fibred 1-cell as in \cref{prop:monoidalfibred1cell}.

Finally, it can be verified that starting with a monoidal indexed 2-cell as in \cref{prop:moni2cell}, the induced fibred 2-cell \cref{eq:inducedfibred2cell} is monoidal, i.e.\ $P_m$ satisfies the conditions of a monoidal natural transformation.

Regarding the induced braided and symmetric monoidal structures, 
suppose that $(\X,\ot,I)$ is a braided monoidal category, with braiding $b$ with components
 \[
\braid_{x,y} \maps x \otimes y \xrightarrow{\sim} y \otimes x;
\]
then $\X^\op$ is braided monoidal with the inverse braiding, namely $(\X^\op,\ot^\op,I,\braid^{-1})$.
Now if $(\M,\mu,\mu_0)\colon\X^\op\to\Cat$ is a \emph{braided} lax monoidal pseudofunctor, i.e.\ a braided monoidal indexed category,
by \cref{thm:mainthm} we have an induced braided monoidal structure on $(\inta\M, \ot_\mu, I_\mu)$, namely
\[B_{(x, a), (y, b)} \maps (x, a) \otimes_\mu (y,b)=(x\ot y,\mu_{x,y}(a,b)) \to (y,b) \otimes_\mu (x, a)=(y\ot x,\mu_{y,x}(b,a))\] are given by
$\braid_{x,y} \colon x \ot y \cong y \ot x$ in $\X$ and $(v_{x,y})_{(a,b)} \colon \mu_{x,y} (a,b) \cong \M (\braid^{-1}_{x,y}) (\mu_{y,x} (b,a))$, where $v$ is as in \cref{eq:brweakmonpseudo}. 

If $\M$ is a symmetric lax monoidal pseudofunctor, it can be verified that 
\[B_{(y, b),(x, a)} \circ B_{(x, a),(y, b)} = 1_{(x,a) \ot_\mu (y,b)}\] therefore $\inta \M$ is also symmetric monoidal, as is the monoidal fibration $P_\M \colon \inta \M \to \X$.

\subsection{Monoidal Indexed Categories as ordinary pseudofunctors}\label{monicat=imoncat}

Here we detail the correspondence between monoidal opindexed categories and a pseudofunctors into $\MonCat$ when the domain is a cocartesian monoidal category, as established by \cref{thm:fibrewise=global}; the one for indexed categories is of course similar. We denote by $\nabla_x \colon x+x \to x$ the induced natural components due to the universal property of coproduct, and $
\iota_x \colon x \to x+y$ the inclusion into a coproduct.

Start with a lax monoidal pseudofunctor $\M \colon (\X, +, 0) \to (\Cat, \times, \1)$ equipped with $\mu_{x,y}\colon \M(x) \times \M(y) \to \M(x + y)$ and $\mu_0 \colon \1 \to \M(0)$, which gives the global monoidal structure \cref{eq:globalmonstr} of the corresponding opfibration. There exists an induced monoidal structure on each fibre $\M(x)$ as follows:
\begin{gather}
\label{eq:explicitstructure1}
    \otimes_x \colon \M(x) \times \M(x) \xrightarrow{\mu_{x,x}} \M(x + x) \xrightarrow{\M(\nabla)} \M(x)
    \\
    I_x \colon \1 \xrightarrow{\mu_0} \M(0) \xrightarrow{\M(!)} \M(x) \nonumber
\end{gather}
Moreover, each $\M f \colon \M x \to \M y$ is a strong monoidal functor, with $\phi_{a,b} \colon (\M f)(a) \ot_y (\M f)(b) \xrightarrow{\sim} \M f(a \ot_xb)$ and $\phi_0 \colon I_y \xrightarrow{\sim} (\M f) I_x$ essentially given by the following isomorphisms
\begin{equation}\label{eq:strongmonreindex}
\begin{tikzcd}[row sep=.3in,column sep=.8in]
    \M x \times \M x
    \ar[r,"\M f\times\M f"]
    \ar[d,"\mu_{x,x}"']
    \ar[dr,phantom,"{\scriptstyle\stackrel{\mu^{f,f}}{\cong}}"description] 
    & 
    \M y \times \M y
    \ar[d,"\mu_{y,y}"] 
    \\
    \M(x+x)
    \ar[d,"\M(\nabla_x)"']
    \ar[r,"\M(f+f)"description]
    \ar[dr,phantom,"{\scriptstyle\cong}"description] 
    & \M(y+y)\ar[d,"\M(\nabla_y)"] 
    \\
    \M x \ar[r,"\M f"'] 
    & 
    \M y
\end{tikzcd}
\qquad
\begin{tikzcd}
    \1
    \ar[r,"\mu_0"]
    \ar[d,"\mu_0"']
    \ar[ddr,phantom,"{\scriptstyle\cong}"description] 
    & 
    \M(0)
    \ar[dd,"\M(!)"] 
    \\
    \M(0)
    \ar[d,"\M(!)"'] 
    &\\
    \M x 
    \ar[r,"\M f"']
    & 
    \M y
\end{tikzcd}
\end{equation}
since $\nabla$ and $!$ are natural and $\M$ is a pseudofunctor.

In the opposite direction, take an ordinary pseudofunctor $\M\colon\X\to\MonCat$ into the 2-category of monoidal categories, strong monoidal functors and monoidal natural transformations, with $\otimes_x\colon\M(x)\times\M(x)\to\M(x)$ and $I_x$ the fibrewise monoidal structures in every $\M x$. We can use those to endow $\M$ with a lax monoidal structure via 
\begin{gather*}
    \mu_{x,y} \colon \M(x) \times \M(y) \xrightarrow{\M(\iota_x) \times \M(\iota_y)} \M(x+y) \times \M(x+y) \xrightarrow{\otimes_{x+y}} \M(x+y) 
    \\
    \mu_0 \colon \1 \xrightarrow{I_0} \M(0)
\end{gather*}
The fact that all $\M f$ are strong monoidal imply that the above components form pseudonatural transformations, and all appropriate conditions are satisfied. 

\begin{rmk}\label{rmk:laxmonfun}
In the strict context, a weakly lax monoidal 2-functor $\M\colon(\X,+,0)\to(\Cat,\times,\1)$ with natural laxator and unitor bijectively corresponds to a functor $\X\to\MonCat_\textrm{st}$
since \cref{eq:strongmonreindex} are in fact strictly commutative, by naturality of $\mu,\mu_0$ and functoriality of $\M$.

In the even more special case of an ordinary lax monoidal functor $\M\colon(\X,+,0)\to(\Cat,\times,\1)$, the fibres $\M(x)$ turn out to be \emph{strict} monoidal. For example, strict associativity of the tensor is established by
\begin{displaymath}
\begin{tikzcd}[column sep=.15in]
\M x\times\M x\times\M x\ar[rr,"1\times\ot_x"]\ar[dr,"1\times\mu_{x,x}"']\ar[dddd] &\phantom{A}\ar[d,phantom,"\scriptstyle\cref{eq:explicitstructure1}"]& \M x\times \M x\ar[ddrr,bend left,"\ot_x","\cref{eq:explicitstructure1}"']\ar[dr,"\mu_{x,x}"description] && \\
&\M x\times \M (x+x)\ar[dd,phantom,"(*)"]\ar[ur,"1\times\M(\nabla)"description]\ar[dr,"\mu_{x,x+x}"description] && \M (x+x)\ar[dr,"\M\nabla"description] && \\
&& \M(x+x+x)\ar[ur,"\M (1+\nabla)"description]\ar[dr,"\M(\nabla+1)"description] && \M x \\
& \M(x+x)\times\M x\ar[d,phantom,"\scriptstyle\cref{eq:explicitstructure1}"]\ar[ur,"\mu_{x+x,x}"description]\ar[dr,"\M(\nabla)\times1"description] && \M(x+x)\ar[ur,"\M\nabla"description] & \\
\M x\times\M x\times\M x\ar[ur,"\mu_{x,x}\times1"]\ar[rr,"\ot_x\times1"'] &\phantom{A}&\M x\times\M x\ar[ur,"\mu_x"description]\ar[uurr,bend right,"\ot_x"',"\cref{eq:explicitstructure1}"] &&
\end{tikzcd}
\end{displaymath}
where the three diamond-shaped diagrams on the right commute due to naturality of $\mu$ as well as associativity of $\nabla$ and functoriality of $\M$ already in the monoidal strict opindexed case, whereas $(*)$ is in general $\omega$ from $\cref{eq:omega}$ which in this case is an identity.
\end{rmk}

\bibliographystyle{alpha}
\bibliography{references}

\end{document}